\newtheorem{thm}{Theorem}
\newtheorem{prop}[thm]{Proposition}
\newtheorem{lem}[thm]{Lemma}
\newtheorem{cor}[thm]{Corollary}
\newtheorem{hyp}{Assumption}
\newcommand{\cco}{\llbracket}
\newcommand{\ccf}{\rrbracket}
\newcommand{\po}{\left(}
\newcommand{\pf}{\right)}
\newcommand{\co}{\left[}
\newcommand{\cf}{\right]}
\newcommand{\ao}{\left\{}
\newcommand{\af}{\right\}}
\newcommand{\R}{\mathbb R}
\newcommand{\Sd}{\mathbb S^{d-1}}
\newcommand{\E}{\mathbb E}
\newcommand{\n}{\nabla}
\title{Piecewise deterministic simulated annealing}
\author{Pierre Monmarché}
\begin{document}

\maketitle
\abstract{Given an energy potential on the Euclidian space, a piecewise deterministic Markov process is designed to sample the corresponding Gibbs measure. In dimension one an Eyring–Kramers formula is obtained for the exit time of the domain of a local minimum at low temperature, and a necessary and sufficient condition is given on the cooling schedule in a simulated annealing algorithm to ensure the process converges to the set of global minima. This condition is similar to the classical one for diffusions and involves the critical depth of the potential. In higher dimensions a non optimal sufficient condition is obtained.}

\section{Introduction and main results}

\subsection{Simulated annealing}

The simulated annealing algorithm is a classical stochastic optimization algorithm, which can be seen as a descent algorithm perturbed by random locally counter-productive moves to escape from non-global minima. More precisely, consider an  energy potential $U$  on the Euclidian space such that $e^{-U}\in L^1(\R^d)$. The mass of the Gibbs law associated to $\frac1\varepsilon U$,
\[\mu_\varepsilon(dx) = \frac{e^{-\frac{U(x)}\varepsilon}}{\int_{\R^d} e^{-\frac{U(y)}\varepsilon}dy}dx,\]
  concentrates on the set of global minima of $U$ as $\varepsilon>0$, called the temperature, goes to zero. At fixed temperature, $\mu_\varepsilon$ can be sampled by a Markov Monte-Carlo procedure, namely it can be approximated by the law of $X_T^{\varepsilon}$ where $\po X_t^{\varepsilon}\pf_{t\geq 0}$ 
 is an ergodic Markov process whose invariant law is $\mu_\varepsilon$ and $T$ is large enough so that the process is close to equilibrium at time $T$. A usual choice for $X^\varepsilon$ would be the Fokker-Planck diffusion, that is the solution of
 \begin{eqnarray}\label{EqFokkerPlanck}
  dX^\varepsilon_t & = & - \nabla U\po X^\varepsilon_t\pf dt + \sqrt{2\varepsilon} dB_t,
 \end{eqnarray}
where $B$ is a Brownian motion. In the simulated annealing
algorithm the temperature $\varepsilon_t$ decays over time, so that the solution of this stochastic differential equation becomes an inhomogeneous Markov process $(X_t^{\varepsilon_t})_{t\geq 0}$. If the system cools slowly, \emph{i.e.} if $t\mapsto \varepsilon_t$ goes to zero slowly enough, then the process has enough time to explore the space and approach equilibrium, so that the law of $X_t^{\varepsilon_t}$ gets and stays close to its ``instantaneous'' invariant law, $\mu_{\varepsilon_t}$. In particular, the mass of the law of $X_t^{\varepsilon_t}$ goes to the set of the global minima of $U$ as $t$ goes to infinity. However, if the system is abruptly frozen, in other word if the decay $t\mapsto \varepsilon_t$ is too fast, the process will have a non-zero probability to be
trapped in local minima.

There is a broad literature on this question, both theoretical and practical, and we refer to \cite{Kalivas} for an introduction. A key phenomenon in the analysis of this algorithm (and of many stochastic algorithms, indeed) is metastability (cf. \cite{LelievreMetastable}): at low temperature, the process spends a lot of time trapped in a neighbourhood of local minima, so that mixing - \emph{i.e.} convergence to equilibrium (at fixed temperature) - is very slow. This yields extremely slow theoretical cooling schedules $t\mapsto \varepsilon_t$ to ensure the process converges in probability to the set of global minima of $U$. For instance in the Fokker-Planck diffusion case, it is well-known (see \cite{Holley} among others) that in order for the process to converge in probability to any neighbourhood of the global maxima of $U$, $\varepsilon_t$ should be of order at least $\frac{E^*}{\ln(1+t)}$ where $E^*$ is the critical depth of the potential, a constant that depends on $U$ which will be defined below.

One line of inquiry to improve the algorithm is then to look for other dynamics than \eqref{EqFokkerPlanck}, which would have more inertia and thus would escape more easily from local traps. One of the main example is the kinetic Langevin dynamics, studied in \cite{Lelievre2006,MonmarcheRecuitHypo}. Among other possibilities, the reversible dynamics \eqref{EqFokkerPlanck} is perturbed in \cite{Lelievre2012} with a divergence-free drift; or processes with more general memories than kinetic ones are considered in \cite{GadatPanloup2013}.

In this work we propose a Markov process $(X_t,Y_t)_{t\geq 0}$ on $\R^d\times\Sd$ with the following properties :
\begin{itemize}
 \item $dX_t = Y_t dt$.
 \item the process is ergodic (in the sense that for all initial condition, its law converges to a unique invariant law $\mu$), and the first marginal of $\mu$ is prescribed as  $e^{-U(x)}dx$.
 \item $Y_t$ is a jump process.
\end{itemize}
The first property means $(X_t,Y_t)$ is a kinetic process, such as the kinetic Langevin one, and we call $X$ the position and $Y$ the velocity. The second one means it serves its intended purpose. Finally the fact that $Y_t$ is a jump process on the sphere makes the whole process very easy to sample on a computer. More precisely between two times of jump of $Y_t$, $(X_t,Y_t)$ is completely deterministic. Such piecewise deterministic Markov processes (PDMP) have recently attracted much attention in various fields, since they are a simple  alternative to diffusions to model stochastic systems (see \cite{Krell} and references within for an overview).

In dimension greater than 1 there are many ways to meet the above requirements and thus in a first instance we will focus on the case $d=1$, for which the possibilities are more limited. 


\subsection{The one-dimensional process}\label{SectionDefRTP}

Our aim is to define a Markov process $(X_t,Y_t)$ with the properties above. In dimension one, the velocity is either $1$ or $-1$. If the process goes twice through the same state $(x,y)$, necessarily it has had to make a U-turn in the meanwhile and come back the other way, hence to visit $(x,-y)$. On average (in time), $(x,1)$ and $(x,-1)$ are thus equally visited. Ergodicity then implies the invariant law is necessarily a product measure whose second marginal is uniform on  $\{\pm1\}$. 

Recall that the semi-group $(P_{s,t})_{t\geq s\geq0}$ and the infinitesimal generator $\po L_t\pf_{t\geq 0}$ associated to the process are defined as
\[P_{s,t} f(x,y) = \E\co f(X_t,Y_t)\ | \ (X_s,Y_s) = (x,y)\cf\]
for all bounded $f$ and
\[L_tf = \underset{h\rightarrow 0}\lim \frac{P_{t,t+h} f - f}{h}\]
whenever this limit (say in the uniform norm sense) exists. When the process is homogeneous (in our case, it means when the temperature is constant, which is the case for now), we only write $P_t = P_{0,t}$ and $L=L_t$. We will describe the dynamics through the generator. 

In dimension one, 
the only possibility when a jump occurs is to transform the velocity into its opposite. This yields an infinitesimal generator of the form 
\begin{eqnarray}\label{EqGenedebut}
Lf(x,y) = y f'(x,y) + \lambda(x,y) \po f(x,-y) - f(x,y)\pf
\end{eqnarray}
where the rate of jump $\lambda$ is a non-negative function. In this case, the law $\mu$ is invariant iff $\int L f d\mu = 0$ for all $f$. This is equivalent to
\[y U'(x) = \lambda(x,y) - \lambda(x,-y),\]
which is satisfied if and only if $\lambda$ is of the form $\lambda(x,y) = \frac12\po y U'(x) + a(x)\pf$ for some function $a(x)$ (note that necessarily $a(x) = \lambda(x,y) + \lambda(x,-y)$). The  non-negativity of $\lambda$ implies $a(x) \geq |U'(x)|$. When this is indeed an equality,  $\lambda(x,y) = \po yU'(x)\pf_+$ (where $(g)_+$ denotes the positive part of $g$, equal to $g$ if $g>0$ and $0$ else): this is the choice that minimizes the rate of jump, namely the dissipative behaviour of the process. On the other hand it is convenient from the simulation viewpoint. Indeed, it implies that while $Y_tU'(X_t) \leq 0$, or in other words while the process is going down the potential, no jump is allowed. On the contrary if $Y_tU'(X_t) > 0$ the next time $T$ of jump will be such that
\[E:=\int_0^T y U'(x+ys) ds = U(x+yT) - U(x)\]
has an exponential law with mean 1, which we  denote by $\mathcal E(1)$. Thus we only need to compute the potential along the trajectory, and to simulate a Poisson process. More precisely, let $(E_k)_{k\geq 0}$ be a family of i.i.d. variables with exponential law and set $(X_0,Y_0)=(x_0,y_0)$ and $T_0=0$. Suppose the process is already constructed up to a jump time $T_i$, $i\geq 0$, and is independent from $(E_k)_{k\geq i}$ up to $T_i$. Let
\begin{eqnarray}\label{DefiTavecExpo}
 T_{i+1} & = & \inf\left\{t > T_i,\ \int_{T_i}^t \lambda\po X_{T_i} + Y_{T_i}(s-T_i),Y_{T_i}\pf ds \geq E_i\right \}\\
 X_s & = &  X_{T_i} + Y_{T_i}(t-T_i)\hspace{20pt}\text{ for }s\in[T_i,T_{i+1}]\notag\\
 Y_s & =&  Y_{T_i} \hspace{87pt}\text{ for } s\in[T_i,T_{i+1})\notag\\
 Y_{T_{i+1}} & = & - Y_{T_i}.\notag
\end{eqnarray}
This define a Markov process $(X_t,Y_t)$ with generator \eqref{EqGenedebut}.

In the literature such a process, which belongs to the larger class of switched PDMP (\cite{BLBMZ2}) goes sometimes by the name of (integrated) telegraph process (\cite{Ratanov,Monmarche2013}). It can be seen as the continuous limit of persistent walks (\cite{DiaconisMiclo,Volte-Face}), which were already studied as a possible alternative to reversible walks to sample discrete Gibbs measures, and it is reminiscent of the so-called Hit-and-Run sampler (\cite{HitandRun}). It has already been studied to model the motion of the bacterium \emph{Escheria coli} (\cite{Calvez,Fontbona2010,Othmer}) and is called in this context a velocity jump process, which is the name we are going to use since it still makes sense in a metastable context and in dimension greater than 1 (see Section \ref{SectionDefDim}).

\subsection{Main results in dimension one}

As a first step we will consider the velocity jump process $(X_t^\varepsilon,Y_t^\varepsilon)_{t\geq0}$ on $\R\times\{\pm1\}$ at low (but fixed) temperature $\varepsilon>0$, namely with generator
\begin{eqnarray}
 L f(x,y) & = & y f'(x,y) + \frac1\varepsilon\po y U'(x)\pf_+\po f(x,-y) - f(x,y)\pf.\label{EqGenerateur}
\end{eqnarray}
We want to understand how long it takes for the process to escape from a local minimum. Let $U$ be a double-well potential, namely a Morse function with two local minima, denoted by $x_0\leq x_2$, a local maximum $x_1\in(x_0,x_2)$, and which is convex outside $(x_0,x_2)$ and goes to infinity as $|x|\rightarrow\infty$. Recall that $U$ is said to be a Morse function if all its critical points are non-degenerate; in other words if $\po U'(x) =0\pf\Rightarrow\po U''(x)\neq 0\pf$. Suppose $(X_0^\varepsilon,Y_0^\varepsilon) = (x_0,-1)$, and let 
\begin{eqnarray*}
 \tau & = & \inf \left\{t>0, X_t^\varepsilon =x_1\right\}
\end{eqnarray*}
be the first hitting time of $x_1$ (note that, contrary to a diffusion which may fall back, when $X^\varepsilon$ reaches $x_1$ it deterministically leaves $[x_0,x_1]$ and falls down to $x_2$). Then the energy barrier to overcome in order to leave $[x_0,x_1]$ is $U(x_1) - U(x_0)$. We will prove what is usually called an Eyring–Kramers formula (or an Arrhenius law):
\begin{thm}\label{ThmCVexpoDimUn}
For the velocity jump process with generator \eqref{EqGenerateur} starting at $(X_0^\varepsilon,Y_0^\varepsilon) = (x_0,-1)$ in the double-well potential $U$,
  \begin{eqnarray*}
  \E \co \tau\cf & = &  \sqrt{\frac{8\pi \varepsilon}{ U''(x_0)}}  e^{\frac{U(x_1) - U(x_0)}{\varepsilon}} \po 1 + \underset{\varepsilon \rightarrow 0}o (1)\pf,\\
  \mathbb P\po \tau\geq t \E \co \tau\cf \pf & \underset{\varepsilon \rightarrow 0}\longrightarrow & e^{-t}.
 \end{eqnarray*}
\end{thm}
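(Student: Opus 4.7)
The proof relies on an explicit regenerative decomposition of the trajectory, made possible by the fact that with $\lambda(x,y)=(yU'(x))_+/\varepsilon$ no jump can occur while moving downhill. Starting from $(x_0,-1)$, the process moves left and jumps exactly once at a random $X_0'<x_0$ (it must eventually jump since $U\to+\infty$), then returns ballistically to $x_0$ with velocity $+1$. From $(x_0,+1)$ it either reaches $x_1$ without jumping---the success event---or it jumps at some $X\in(x_0,x_1)$, comes back through $x_0$ with velocity $-1$, bounces exactly once at a random $X'<x_0$, and returns to $(x_0,+1)$. By the strong Markov property the successive excursions from $(x_0,+1)$ are i.i.d., and by integrating the rate $U'/\varepsilon$ along $[x_0,x_1]$ each one succeeds with probability
\[
 p(\varepsilon)\ =\ \exp\!\left(-\frac{U(x_1)-U(x_0)}{\varepsilon}\right),
\]
so that the number $N$ of attempts is geometric of parameter $p(\varepsilon)$.

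The mean duration of a failed cycle is then computed by Laplace's method. Such a cycle lasts $2(X-X')$, where $X-x_0$ and $x_0-X'$ are independent with densities respectively proportional to $U'(x)\exp(-(U(x)-U(x_0))/\varepsilon)$ on $(x_0,x_1)$ (conditionally on failure, but this conditioning only costs a factor $1+o(1)$ since $p(\varepsilon)\to0$) and to $-U'(x')\exp(-(U(x')-U(x_0))/\varepsilon)$ on $(-\infty,x_0)$. Using the non-degeneracy $U(x)-U(x_0)\sim\tfrac12U''(x_0)(x-x_0)^2$, both expectations equal $\sqrt{\pi\varepsilon/(2U''(x_0))}(1+o(1))$, hence the mean failed-cycle duration is $\bar c(\varepsilon)=2\sqrt{2\pi\varepsilon/U''(x_0)}(1+o(1))$. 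Since the initial half-excursion contributes $O(\sqrt\varepsilon)$ and the terminal successful leg is bounded by $x_1-x_0$, both are negligible against $\E[N]\bar c(\varepsilon)$, and Wald's identity yields
\[
 \E[\tau]\ =\ \frac{\bar c(\varepsilon)}{p(\varepsilon)}(1+o(1))\ =\ \sqrt{\frac{8\pi\varepsilon}{U''(x_0)}}\,e^{(U(x_1)-U(x_0))/\varepsilon}(1+o(1)).
\]

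For the convergence in distribution of $\tau/\E[\tau]$, write $\tau=T_0+\sum_{k=1}^{N-1}C_k+T_{\mathrm{succ}}$ with $(C_k)$ i.i.d.\ and independent of $N$. It is classical that a geometric $N$ with parameter $p\to0$ satisfies $p N\to\mathcal E(1)$ in distribution; factoring $\tau/\E[\tau]=(p N)\cdot\frac{\sum_{k=1}^{N-1}C_k}{(N-1)\bar c(\varepsilon)}\cdot(1+o(1))$ and using a law of large numbers for the $C_k$'s along the random index $N$ forces the middle factor to $1$ in probability, giving $\tau/\E[\tau]\to\mathcal E(1)$. The main technical obstacle is to make this law of large numbers uniform in $\varepsilon$: one must check that $\mathrm{Var}(C)/\bar c(\varepsilon)^2$ stays bounded as $\varepsilon\to 0$---which follows from the fact that the second moments of $X-x_0$ and $x_0-X'$ are also of order $\varepsilon/U''(x_0)$ by the same Laplace expansion---and then apply Chebyshev's inequality conditionally on $N$, noting that $N\to+\infty$ in probability.
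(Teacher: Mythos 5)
Your proposal is correct and follows essentially the same route as the paper: regenerative decomposition at the local minimum, a geometric number of attempts with parameter $e^{-(U(x_1)-U(x_0))/\varepsilon}$, Laplace asymptotics $\sqrt{\pi\varepsilon/(2U''(x_0))}$ for each half-cycle, Wald's identity for the mean, and convergence of the normalised geometric to an exponential law combined with a law of large numbers along the random index $N$. The only notable technical variant is the handling of that last law of large numbers: the paper couples all temperatures on a single i.i.d.\ exponential sequence so that $N\to\infty$ almost surely, whereas you propose a Chebyshev bound conditional on $N$ using the uniform-in-$\varepsilon$ variance estimate, which is an equally valid way to close the argument.
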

This can be compared to the case of a Fokker-Planck diffusion $Z_t^\varepsilon$ with generator 
\begin{eqnarray*}
 L_{di} f(x) &  =&  -U'(x) f'(x) + \varepsilon f''(x),
 \end{eqnarray*}
 which has been studied in much more general settings. Let $\eta>0$ be small and
 \begin{eqnarray*}
\tau_{di} & = & \inf \left\{t>0, Z_t^\varepsilon = x_1+\eta\right\}.
\end{eqnarray*}
The work of Bovier \emph{$\&$ al} applies here and yields:
\begin{thm}[from \cite{Bovier1,Bovier2}]
For the Fokker-Planck diffusion starting at $Z^\varepsilon_0 = x_0$ in the double-well potential $U$,
 \begin{eqnarray*}
  \E \co \tau_{di}\cf & = & \frac{2\pi e^{\frac{U(x_1)-U(x_0)}{\varepsilon}}}{\sqrt{|U''(x_1)|U''(x_0)}} \po 1 + \underset{\varepsilon \rightarrow 0}o (1)\pf,\\
  \mathbb P\po \tau_{di} \geq t \E \co \tau_{di}\cf \pf & \underset{\varepsilon \rightarrow 0}\longrightarrow & e^{-t} .
 \end{eqnarray*}
\end{thm}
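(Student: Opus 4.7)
The plan is to exploit the very explicit structure available in dimension one. The generator $L_{di}$ is reversible with respect to $\mu_\varepsilon(dx)=Z_\varepsilon^{-1}e^{-U(x)/\varepsilon}dx$, and the mean hitting time $u(x)=\E_x[\tau_{di}]$ solves $L_{di}u=-1$ on $(-\infty,x_1+\eta)$ with $u(x_1+\eta)=0$ and the natural boundary condition at $-\infty$. Integrating this ODE twice using $e^{-U/\varepsilon}$ as integrating factor yields the classical Kramers-type formula
\begin{eqnarray*}
\E_{x_0}\co\tau_{di}\cf \;=\;\frac{1}{\varepsilon}\int_{x_0}^{x_1+\eta}e^{U(y)/\varepsilon}\po\int_{-\infty}^y e^{-U(z)/\varepsilon}dz\pf dy,
\end{eqnarray*}
which is equivalent to the potential-theoretic identity $\E_{x_0}[\tau_{di}]=\int h\,d\mu_\varepsilon/\mathrm{cap}(\{x_0\},\{x_1+\eta\})$ of Bovier et al., in which both the equilibrium potential $h$ and the capacity admit closed-form expressions in one dimension.

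The sharp asymptotics then follow from Laplace's method applied to both integrals. Since the outer integration range $[x_0,x_1+\eta]$ does not reach the second well $x_2$, the inner integral is dominated by the Gaussian peak at $z=x_0$, giving $\sqrt{2\pi\varepsilon/U''(x_0)}\,e^{-U(x_0)/\varepsilon}(1+o(1))$ uniformly in $y$ in this range (the left tail is controlled using the convexity of $U$ at $-\infty$). The outer integrand $e^{U(y)/\varepsilon}$ is maximal at the interior saddle $y=x_1$ and contributes $\sqrt{2\pi\varepsilon/|U''(x_1)|}\,e^{U(x_1)/\varepsilon}(1+o(1))$. Multiplying the two estimates, the $\varepsilon$-prefactors $\varepsilon^{-1}\cdot\sqrt{\varepsilon}\cdot\sqrt{\varepsilon}$ combine to $1$ and the exponentials combine to $e^{(U(x_1)-U(x_0))/\varepsilon}$, producing exactly the announced prefactor $2\pi/\sqrt{|U''(x_1)|U''(x_0)}$.

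For the second assertion, that $\tau_{di}/\E[\tau_{di}]$ converges in law to $\mathcal E(1)$, the classical metastability scheme applies. Inside the well $(-\infty,x_1+\eta)$ the dynamics mixes towards a quasi-stationary distribution $\nu_\varepsilon$ on a timescale $O(1)$ (the local relaxation time near $x_0$), which is exponentially shorter than $\E[\tau_{di}]$. Hence, starting from $x_0$, the process conditionally on non-exit reaches $\nu_\varepsilon$ long before it escapes; from $\nu_\varepsilon$, the exit time is by definition exactly exponential of rate $\lambda_\varepsilon$, the principal eigenvalue of $-L_{di}$ on the well with Dirichlet condition at $x_1+\eta$. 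Combining the first part of the proof with the Rayleigh variational characterisation of $\lambda_\varepsilon$ gives $\lambda_\varepsilon\E[\tau_{di}]\to 1$, and the exponential convergence follows. The main obstacle is obtaining the $(1+o(1))$ error in the Laplace step \emph{uniformly} in the outer variable $y$ near the saddle: the strategy is to split $[x_0,x_1+\eta]$ into a $\sqrt{\varepsilon}$-neighbourhood of $x_1$, handled by direct Gaussian substitution, and its complement, handled by crude exponential bounds using the strict positivity of $U(x_1)-U(y)$ away from $x_1$.
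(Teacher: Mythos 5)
The paper does not contain a proof of this statement: it is quoted directly from Bovier \emph{et al.} \cite{Bovier1,Bovier2} as a benchmark against which the paper's own PDMP result (Theorem~\ref{ThmCVexpoDimUn}) is compared, so there is no in-paper argument to measure your attempt against. With that caveat, your sketch correctly reproduces the classical one-dimensional derivation. The identity
\[
\E_{x_0}\co\tau_{di}\cf = \frac{1}{\varepsilon}\int_{x_0}^{x_1+\eta} e^{U(y)/\varepsilon}\po\int_{-\infty}^{y} e^{-U(z)/\varepsilon}\,dz\pf dy
\]
does follow from integrating $\varepsilon u''-U'u'=-1$ with integrating factor $e^{-U/\varepsilon}$, the entrance behaviour at $-\infty$ (guaranteed here by the convexity of $U$ at infinity) killing the spurious constant, and the double Laplace expansion around $z=x_0$ and $y=x_1$ gives exactly the claimed prefactor $2\pi/\sqrt{|U''(x_1)|U''(x_0)}$ since $x_0$ is the global minimum of $U$ on $(-\infty,x_1]$ and $x_1$ is an interior non-degenerate maximum of $[x_0,x_1+\eta]$. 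This is indeed the closed-form that the capacity/equilibrium-potential identity of Bovier \emph{et al.} reduces to in dimension one.

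The exponential-law assertion is where your argument is only a pointer, not a proof. Invoking the quasi-stationary distribution and the principal Dirichlet eigenvalue is the right mechanism, but the two facts you state — equilibration to the QSD on a time scale exponentially shorter than $\E\co\tau_{di}\cf$, and $\lambda_\varepsilon\,\E\co\tau_{di}\cf\to 1$ — are precisely the nontrivial technical content of \cite{Bovier1,Bovier2}; neither is a consequence of the first half of your argument, and the second is not ``combining the first part with the Rayleigh characterisation'' (that gives an upper bound on $\lambda_\varepsilon$, not the matching lower bound, which is where the capacity estimates are genuinely needed). Since the paper itself treats the whole theorem as imported, deferring to those references is acceptable here, but you should not present the second assertion as following routinely from the first.
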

\textbf{Remarks:}
\begin{itemize}
\item Both processes samples the same Gibbs law. 
To sample the diffusion, one needs to generate a Brownian motion, while the PDMP can be constructed from a sequence of independent exponential variables $(E_k)_{k\geq 0}$.
 \item The velocity jump process moves at constant speed. In particular it takes a constant time (constant in the sense it does not depends on $\varepsilon$) to pass through the interval $(x_1-\eta,x_1+\eta)$, whose probability under the law $\mu_\varepsilon$ is of order
 \[\exp \po {-\frac{1}{\varepsilon}}\po \min(U(x_1-\eta),U(x_1+\eta))-\min\po U(x_0),U(x_2)\pf\pf\pf.\]
 Since the ratio between the average times spent in this interval and outside of it should be of this order, it means the time between two crossing of $(x_1-\eta,x_2+\eta)$ needs to be of order the inverse of this probability, which explains the exponential factor of $\E \co \tau\cf$ had to be expected.
 \item The fact $U''(x_1)$ does not appear in the PDMP case is also natural. Indeed, the probability that the process starting at $(x_0,1)$ reaches $x_1$ in one shot (\emph{i.e.} before coming back to $(x_0,-1)$) depends only on $U(x_1)-U(x_0)$, and not on the local geometry of the potential near $x_1$. On the contrary the process stays mainly in the neighbourhood of $x_0$, so that $U''(x_0)$ does intervene. If $U$ were flat, for instance $U= d\po 1 - \mathbb 1_{[x_0-1,x_0+1]}\pf$ for some $d>0$, an adaptation of the proof of Theorem \ref{ThmCVexpoDimUn} would yield $\E\co \tau\cf \simeq 2 e^{\frac d \varepsilon}$.
 \item Both $\E \co \tau\cf$ and $\E \co \tau_{di}\cf$  have the same exponential order.
\end{itemize}

With Theorem \ref{ThmCVexpoDimUn} in mind we then turn to the study of the inhomogeneous process $(X_t^{\varepsilon_t},Y_t^{\varepsilon_t})$ with generator
\begin{eqnarray}
 L_t f(x,y) & = & y f'(x,y) + \frac1{\varepsilon_t}\po y U'(x)\pf_+\po f(x,-y) - f(x,y)\pf.\label{EqGenerateurDependdeT}
\end{eqnarray}
\begin{hyp}\label{HypoUepsi}
\begin{itemize}
\item The cooling schedule $t\mapsto \varepsilon_t>0$ is non-increasing and goes to 0.
\item The potential $U$ on $\R$ is a  smooth Morse function  with a finite number of local extrema (one of which at least is a non-global minimum), unbounded and convex at infinity.
\end{itemize}
\end{hyp}
 We say that $z$ is reachable from $x$ at height $V$ if $\max\{U\po x+t(z-x)\pf,\ t\in[0,1]\}\leq V$ and we call the depth of a local minimum $x$ the smallest $V$ such that there exist a $z$ with $U(z)<U(x)$ which is reachable from $x$ at height $U(x)+V$ (the depth of a global minimum is set to $+\infty$). The critical depth of $U$, denoted by $E^*$, is then defined as the maximal among the depths of all local minima of $U$ which are not global minima (see Fig. \ref{FigureProfondeur}).

Adapting to our settings the work of Hajek (\cite{Hajek}) on simulated annealing on a discrete space, we will prove the following :

\begin{thm}\label{TheoremNS}
Let $U$ and $(\varepsilon_t)_{t\geq 0}$ satisfy Assumption \ref{HypoUepsi} and consider the process $(X_t^{\varepsilon_t},Y_t^{\varepsilon_t})$ with generator \ref{EqGenerateurDependdeT} and any initial condition. We have the following:
\begin{enumerate}
 \item If $S$ is a neighbourhood of all local minima of $U$, then
 \[\underset{t\rightarrow\infty}\lim \mathbb P\po X_t^{\varepsilon_t} \in S\pf =1.\]
 \item If $S$ is a neighbourhood of all local minima of depth $E$, and its complementary $S^c$ is a neighbourhood of all other local minima,
 \[\underset{t\rightarrow\infty}\lim \mathbb P\po X_t^{\varepsilon_t} \in S\pf =0 \hspace{25pt}\Leftrightarrow \hspace{25pt} \int_0^\infty  \po \varepsilon_{s}\pf^{-\frac12}e^{-\frac{E}{\varepsilon_{s}}} ds = \infty\]
 \item   As a consequence,
  \[\forall \delta >0\ \underset{t\rightarrow\infty}\lim \mathbb P\po U\po X_t^{\varepsilon_t}\pf < \underset{\R}\min U + \delta\pf =1 \hspace{25pt}\Leftrightarrow \hspace{25pt}  \int_0^\infty  \po \varepsilon_{s}\pf^{-\frac12}e^{-\frac{E^*}{\varepsilon_{s}}} ds = \infty\]
\end{enumerate}
\end{thm}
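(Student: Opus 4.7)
The strategy is to adapt the discrete-space arguments of Hajek \cite{Hajek} to the present PDMP, using Theorem \ref{ThmCVexpoDimUn} as the key quantitative input that replaces the exit probability estimate available in the discrete case. The core idea is that at a low enough temperature the inhomogeneous process essentially behaves like a time-changed Markov jump chain on the (finite) set of local minima of $U$, with transition times governed by the Eyring--Kramers formula, so that the asymptotic analysis reduces to a Borel--Cantelli argument on the cooling schedule.

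For point (i), note that in dimension one the process traverses deterministically each downhill segment at unit speed, while each elementary uphill excursion lasts at most the width of the non-convex region of $U$, a quantity which is bounded under Assumption \ref{HypoUepsi}. On the other hand, by Theorem \ref{ThmCVexpoDimUn} and the fact that every local minimum has positive depth, the mean sojourn time near any local minimum diverges as $\varepsilon_t\to 0$. An averaging argument then shows that the fraction of time spent outside an arbitrary neighbourhood $S$ of the set of local minima vanishes, so that $\mathbb P\po X_t^{\varepsilon_t}\in S\pf \to 1$.

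The heart of the proof is (ii). Fix a local minimum $m$ of depth $E$. I would first establish a non-homogeneous variant of Theorem \ref{ThmCVexpoDimUn}: conditionally on the process being in the well of $m$ at time $t_0$, the first exit time $\tau$ from that well satisfies
\[\mathbb P\po \tau > t\ |\ \mathcal F_{t_0}\pf = \exp\po -\int_{t_0}^{t} \lambda(s)\, ds\pf \po 1+\underset{\varepsilon\rightarrow 0}o(1)\pf,\qquad \lambda(s) \;=\; \sqrt{\frac{U''(m)}{8\pi \varepsilon_s}}\, e^{-E/\varepsilon_s}.\]
This would be obtained by slicing $[t_0,t]$ into intervals on which $\varepsilon_s$ is essentially constant, comparing the process on each slice to the homogeneous one at the frozen temperature (so that Theorem \ref{ThmCVexpoDimUn} applies), and multiplying the resulting survival probabilities. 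Once this is granted, the two directions of (ii) follow from a Borel--Cantelli / independence argument: if $\int^\infty \lambda(s)\,ds = \infty$, the process almost surely eventually exits the well of $m$, and, by induction on the finite tree of local minima ordered by depth, ends up in a well of depth strictly greater than $E$, which lies in $S^c$; conversely, if the integral is finite, then with positive probability the process never escapes the well of $m$, so it remains in $S$. Conclusion (iii) is then the special case $E=E^\ast$, since by definition the non-global minima are exactly those of depth at most $E^\ast$, and it suffices to combine (i) with (ii).

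The main obstacle will be the non-homogeneous extension of Theorem \ref{ThmCVexpoDimUn} in a form strong enough to yield the \emph{both} directions of (ii): the classical proof of Eyring--Kramers relies on the process reaching a quasi-stationary regime inside the well, which requires $\varepsilon$ to vary slowly compared to the relaxation time within a well. One therefore needs uniform-in-$\varepsilon$ bounds on the (restricted) spectral gap of the velocity jump process inside a well, together with a coupling between the inhomogeneous process and piecewise-homogeneous ones pinned at gridpoints of a fine time discretisation, so that the exit rate is stable under slow cooling. Once this stability is established, the remaining arguments are combinatorial on the finite collection of local minima and follow Hajek's scheme closely; in particular the extra factor $\varepsilon_s^{-1/2}$ appearing in the integrability condition, compared to the classical diffusive one, is exactly the $\sqrt{\varepsilon}$ prefactor in Theorem \ref{ThmCVexpoDimUn}.
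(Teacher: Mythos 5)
Your high-level framing (adapt Hajek, use the low-temperature exit-time estimate as the quantitative input, Borel--Cantelli on the cooling schedule, reduce to a finite combinatorial structure of local minima) matches the paper's strategy, and you correctly diagnose that the prefactor $\varepsilon_s^{-1/2}$ comes from the $\sqrt\varepsilon$ factor in Theorem~\ref{ThmCVexpoDimUn}. However, the route you propose through the key technical step is not the one the paper takes and, as written, has a genuine gap.

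You propose to establish a sharp non-homogeneous Eyring--Kramers formula, with the exact exit rate $\lambda(s)=\sqrt{U''(m)/(8\pi\varepsilon_s)}\,e^{-E/\varepsilon_s}$, by slicing time, using quasi-stationarity and a uniform restricted spectral gap for the frozen process, and coupling to piecewise-homogeneous processes at gridpoints. This is much more than is needed and, for this degenerate PDMP, much harder than you acknowledge: with the minimal jump rate $(yU'(x))_+$ the process carries no residual noise, the \emph{carr\'e du champ} is not a squared gradient, and the paper itself points out (end of Section~1.3) that hypocoercive/spectral-gap machinery is not currently available in this setting. The paper instead establishes two coarse, one-sided bounds --- Theorem~\ref{TheoremsortieCoupe} (an upper bound on $\E\!\int_{t_0}^{\tau_C}\varepsilon_s^{-1/2}e^{-d/\varepsilon_s}ds$ and a lower bound on the probability of exiting through a given endpoint) and Theorem~\ref{TheoremLowerBound} (a lower bound on $\mathbb P(\tau_C\geq r)$) --- proved by a pathwise induction over the number of local minima in a cusp, crucially exploiting that the cooling schedule is \emph{non-increasing} so that each excursion can be compared to the homogeneous process at the frozen temperature $\varepsilon_{\tau_i}$. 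These bounds, with non-explicit constants, suffice for both directions of part~2, whereas your sharp rate formula would require controlling the precise relaxation within a well under slowly varying temperature, a far more delicate (and here unavailable) estimate.

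Two smaller points. For part~1, your averaging argument ("time spent outside $S$ is negligible") is not what the paper does and would still require ruling out escape to infinity; the paper handles part~1 via Lemma~\ref{LemPegalUn}, which directly shows $\mathbb P(X_t\in R_E)\to1$ using the cusp decomposition of $\R\setminus J$, and separately uses convexity at infinity. For part~3, reading it off as "the case $E=E^\ast$" is too quick: one must combine part~1, the $\Leftarrow$ implication of part~2 applied to each non-global minimum (whose depths range over $(0,E^\ast]$), and the presence of at least one non-global minimum for the converse, as the paper does through the fast/slow cooling dichotomy around the critical exponent $F$.
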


\begin{figure}
 \centering
 \includegraphics[scale=0.5]{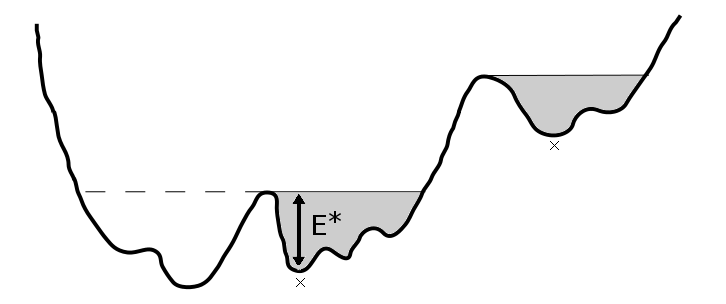}
 \caption{The depth of a local minima, and the critical depth.}\label{FigureProfondeur}
\end{figure}

In particular if the cooling schedule is of the parametric form
\[\varepsilon_t = \frac{c}{\ln(1+t)}\]
with $c>0$, the algorithm succeed (\emph{i.e.} the process converges in probability to any neighbourhood of the global minima of $U$) if and only if $c\geq E^*$.

It is somehow a negative result: it means this velocity jump process does not allow faster cooling schedules than the classical reversible diffusion one. On the other hand it is somehow a positive result, since it allows cooling schedule as fast as the diffusion \emph{and} it is easier and faster to compute numerically. Finally, positive or negative, it is above all a theoretical result. In practice the simulation is done in a finite time horizon; a context where the theoretical logarithmic schedules are far from efficient. The next step of the study should thus be to give non-asymptotic results in the spirit of the work of Catoni (\cite{Catoni}).

\bigskip

Before proceeding to the proofs, here is a remark about our method. The extreme simplicity of the motion permits an elementary analysis. We could have tried to use functional inequalities tools, such as in \cite{Holley,Miclo92} (see also \cite{logSob} for a general introduction). Indeed, since $U$ is assumed to be convex at infinity, it is known the associated Gibbs measure satisfies a spectral gap (or Poincar\'e) inequality, with a well-understood asymptotic of the constant. But since the \emph{carr\'e du champs} of the velocity jump process is not the square of the gradient, it is not clear whether it gives any information on the way the process relaxes to equilibrium. This is a typical problem in the field of hypocoercivity, and indeed our PDMP has been studied in \cite{Calvez,Monmarche2013} from this viewpoint. But in both works the rate of jump is assumed to be bounded from below by a positive constant, which means there is at all time a residual randomness. This is not the case with our minimal choice $\lambda(x,y) = \po yU'
(x) \pf_+$ and in this sense our process is quite 
degenerate among degenerate processes. 

However, if on the one hand some ideas are missing to treat this degenerate situation with hypocoercive tools, on the other hand, hopefully, once precisely understood thanks to elementary analysis, this process may be a good benchmark to investigate several hypocoercive questions, such as the relationships between functional inequalities, gradient estimates and Wasserstein convergence.

Finally note that, even if the pathwise strategy we will adapt from Hajek is very close to the Freidlin and Wentzel approach \cite{FreidlinWentzell}, the latter theory would yield slightly less precise results since for continuous-time processes it only deals with the large deviation scaling (namely with the asymptotic of quantities such as $\frac1t\ln \mathbb P(X_t^\varepsilon \in A)$ rather than  $\mathbb P(X_t^\varepsilon \in A)$).

\subsection{Definition and results in any dimension}\label{SectionDefDim}

The interest of the simulated annealing algorithm appears in large dimension, and so we now define a suitable piecewise deterministic Gibbs sampler in this context. We call  velocity jump process the Markov process on $\R^d\times\mathbb S^{d-1}$ whose generator is
\begin{eqnarray*}
L f(x,y) &=   & y.\n_x f(x,y) + \po y.\n_x U(x)\pf_+ \po f(x,y^*) - f(x,y) \pf + r \po \int f(x,z) dz - f(x,y)\pf,
\end{eqnarray*}
where $r>0$ is a parameter, $dz$ denotes the uniform measure on $\Sd$ and
\[ y^* = y - 2 \po y. \frac{\nabla_x U(x)}{|\nabla_x U(x)|}\pf\frac{\nabla_x U(x)}{|\nabla_x U(x)|}\]
(the explanations of this definition is postponed to the end of this section). As we will see, the measure $e^{-U(x)}dx \times dy$ is invariant for this process. A trajectory is defined in a similar manner that in dimension 1, except that there are now two different clocks:
  \begin{eqnarray*}
 T  & = & \inf\left\{t > 0,\ \int_{0}^t \po Y_{0} .\nabla U\po X_{0} + Y_{0}s\pf\pf_+ ds \geq E_1\right \},\\
 S  & = & \frac1r E_2,
\end{eqnarray*}
where $E_1,E_2$ are independent standard exponential random variables. The process evolves deterministically according to $dX = Y$ and $dY =0$ up to time $T\wedge S$, at which its velocity jumps from $Y$ to $Y'$ uniformly sampled on the sphere if $T\wedge S=S$, or to   $Y^*$ if $T\wedge S=T$. This latter case means that only the part of $Y$ which is parallel to $\nabla_x U(X)$ jumps (to its opposite), while the orthogonal part is left untouched. An interpretation is that when it jumps, the process is deterministically reflected according to optical laws - or as a billiard - on the level set of $U$ it has reached (see Fig. \ref{Figrebond}).

\begin{figure}
 \centering
\includegraphics[scale=0.25]{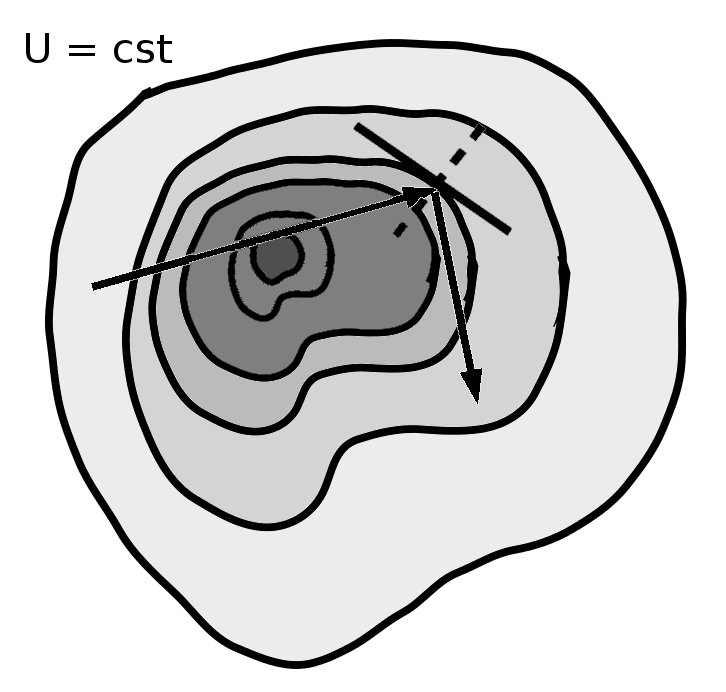}
\caption{At a jump time the process bounces off the level sets of $U$.}\label{Figrebond}
\end{figure}

\bigskip

We restrict the study to a compact case: suppose $U$ is smooth and 1-periodic in the sense $x-z\in \mathbb Z^d \Rightarrow U(x)=U(z)$, let $\mathbb T^d = (\R /\mathbb Z)^d$, consider a cooling-schedule $(\varepsilon_t)_{t\geq 0}$ and the inhomogeneous Markov process $Z^\varepsilon=\po X_t^{\varepsilon_t},Y_t^{\varepsilon_t}\pf_{t\geq 0}$ on $\mathbb T^d \times \mathbb S^{d-1}$ with generator
\begin{eqnarray}\label{EqGenerateurDimD}
\lefteqn{L_t f(x,y) =}\notag\\ 
&   & y.\n_x f(x,y) + \frac{\po y.\n_x U(x)\pf_+}{\varepsilon_t} \po f(x,y^*) - f(x,y) \pf + r \po \int f(x,z) dz - f(x,y)\pf.
\end{eqnarray}
Recall the associated semi-group acts on probability laws $\mathcal P\po \mathbb T^d \times \mathbb S^{d-1}\pf$ by
\[\mu P_{s,t}  = \mathcal L\po (Z_t)\ | \ \mathcal L\po (Z_s)= \mu \pf\pf.\]
 Let  $\nu_t = e^{-\frac1{\varepsilon_t} U(x)}d x\times d y$ and
\[\|\mu_1 - \mu_2\|_{TV} = \underset{Z_1\sim \mu_1,Z_2\sim \mu_2}\inf \mathbb P\po Z_1\neq Z_2\pf\]
be the total variation distance.

\bigskip

First we establish an exponential relaxation to equilibrium when the temperature is fixed:
\begin{thm}\label{ThmDimDConstant}
There exist $c,\theta>0$ that depend only on $d,r$ and $U$ such that if $\varepsilon_t = \varepsilon>0$ is constant (so that $P_{0,t}=P_t$ is homogeneous and $\nu_t = \nu_0$ for all $t$), for all $t\geq0$ and all initial law $\mu$
\begin{eqnarray*}
 \| \mu P_{t} - \nu_0\|_{TV}   &\leq &   e^{-ce^{-\frac{\theta}{\varepsilon}} \po t- \sqrt d\pf} \left\|\mu-\nu_{ 0}\right\|_{TV}.
 \end{eqnarray*}
\end{thm}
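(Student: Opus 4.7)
The plan is to establish a Doeblin-type minorization for the transition kernel: for a reference horizon $T_0$ of order $\sqrt d+O(1)$ and some positive constants $\alpha,\theta$ depending only on $d$, $r$ and $U$, I aim to show that
$$P_{T_0}\po z,\cdot\pf\ \geq\ \alpha\, e^{-\theta/\varepsilon}\, \pi\po\cdot\pf\qquad\forall z\in\mathbb T^d\times\mathbb S^{d-1},$$
where $\pi$ is the uniform probability measure on $\mathbb T^d\times\mathbb S^{d-1}$ (which is equivalent to $\nu_0$ up to a factor $e^{\pm\mathrm{osc}(U)/\varepsilon}$ that can be absorbed into $\theta$). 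The standard Doeblin iteration would then yield $\|\mu P_{nT_0}-\nu_0\|_{TV}\leq (1-\alpha e^{-\theta/\varepsilon})^n\|\mu-\nu_0\|_{TV}$, and taking $n=\lfloor(t-T_0)/T_0\rfloor$ produces the stated exponential contraction after absorbing the small constants into $c$ and into the $\sqrt d$ shift. As a preliminary I would verify that $\nu_0$ is invariant by computing $\int Lf\, d\nu_0$: the transport term yields $\tfrac1\varepsilon\int f(y\cdot\nabla U)e^{-U/\varepsilon}dx\,dy$ after integration by parts; the reflection term cancels it, since $y^\ast\cdot\nabla U=-(y\cdot\nabla U)$ implies $\po y\cdot\nabla U\pf_+-\po y^\ast\cdot\nabla U\pf_+=y\cdot\nabla U$ and $y\mapsto y^\ast$ is an involution preserving $dy$; and the refreshment term vanishes since $dy$ is already the uniform probability measure on the sphere.

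To build the minorizing mass I would restrict to trajectories with exactly two velocity refreshments and no deterministic reflection off the level sets of $U$ inside $[0,T_0]$. Choosing $T_0>2+\sqrt d/2$ (so that the middle straight segment, of length $\geq\sqrt d/2$, exceeds the diameter of $\mathbb T^d$), I condition on the event $\mathcal A$ that the first refreshment falls in $[0,1]$, the second in $[T_0-1,T_0]$, no other refreshment occurs, and no reflection occurs. Writing $s_1<s_2$ and $y_1,y_2$ for the refreshment times and post-jump velocities, the final state on $\mathcal A$ reads
$$X_{T_0}\ =\ x_0+s_1y_0+(s_2-s_1)y_1+(T_0-s_2)y_2,\qquad Y_{T_0}\ =\ y_2.$$
The probability of the correct refreshment pattern is bounded below by a constant $\beta(r,T_0)>0$ that is independent of $\varepsilon$, and the probability of avoiding every reflection is bounded below by $e^{-3\,\mathrm{osc}(U)/\varepsilon}$: indeed along each straight line segment the rate integral $\int\po y\cdot\nabla U(x+ys)\pf_+\,ds$ equals the positive variation of $s\mapsto U(x+ys)$, which is at most $\mathrm{osc}(U)$, and there are three such segments.

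It then remains to show that the pushforward, on $\mathcal A$, of the law of $(s_1,s_2,y_1,y_2)$ through the above map dominates a positive multiple of the uniform measure on $\mathbb T^d\times\mathbb S^{d-1}$, uniformly in $z=(x_0,y_0)$. Given a target $(x^\ast,y^\ast)$, I would set $y_2=y^\ast$ and solve $X_{T_0}=x^\ast$ on the torus for $(s_1,s_2,y_1)$: since the middle length $s_2-s_1$ ranges over an interval of length $\geq 1$ containing values $\geq\sqrt d/2$, for every $s_1\in[0,1]$ there is an $s_2$ and a unit vector $y_1$ solving $(s_2-s_1)y_1=x^\ast-x_0-s_1y_0-(T_0-s_2)y^\ast$ for some lift in $\mathbb Z^d$. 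A local Jacobian computation then delivers a positive density lower bound; setting $\theta=4\,\mathrm{osc}(U)$ (the extra $\mathrm{osc}(U)$ accounts for passing from $\pi$ to $\nu_0$) and $\alpha=\beta$ times this density constant completes the minorization.

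The main obstacle is this last density estimate. The parameter space $(s_1,s_2,y_1,y_2)$ has dimension $2d$ while the target $\mathbb T^d\times\mathbb S^{d-1}$ has dimension $2d-1$, so one must carefully choose a slicing (I fix $y_2=y^\ast$) and check that the resulting map is a submersion whose Jacobian is bounded below in singular values, uniformly in the initial state $z$, away from a negligible set of degenerate configurations (for instance when $y_0$ is aligned with the connecting chord, the value of $s_1$ solving the geometric constraint may be non-unique). Establishing a genuine positive lower bound on the pushforward density, uniform in $z$, is the technical heart of the argument; the invariance check and the Doeblin iteration, by contrast, are essentially routine.
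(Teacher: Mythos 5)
Your overall strategy is sound and in the same spirit as the paper's: condition on a trajectory with exactly two velocity refreshments and no reflection over a horizon of order $\sqrt d$, use the probability of no reflection to extract the factor $e^{-\theta/\varepsilon}$, and use the refreshment randomness to get a Doeblin minorization by the uniform measure, then iterate. Your cost accounting for the reflection-free event (positive variation of $U$ along three straight segments, bounded by $3\,\mathrm{osc}(U)$) is correct, as is the conversion between the uniform measure $\pi$ and $\nu_0$, costing another $\mathrm{osc}(U)/\varepsilon$ in the exponent.

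The structural difference is that the paper is modular: it first proves a uniform contraction for the $U=0$ process alone (Lemma~\ref{LemDimDU0}), entirely free of $\varepsilon$, and then couples the $U\neq 0$ process to that free process by tracking an independent reflection clock (Theorem~\ref{ThmDimdCouple}). This modularity does extra work: it simultaneously delivers the inhomogeneous contraction needed for Theorem~\ref{ThmDimDRecuit}, whereas your direct minorization only treats the constant-temperature case (which is all that is asked here, so that is not a defect).

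The part you rightly flag as ``the technical heart'' — the density lower bound for the pushforward — is where you propose a submersion/Jacobian argument and worry about degeneracies such as $y_0$ aligned with the chord. This worry is misplaced for a one-sided bound: non-uniqueness of the solving $(s_1,y_1)$ can only increase the pushforward density, never decrease it. The paper closes the gap with a much cheaper perturbation argument that bypasses Jacobians entirely. Fixing the second refresh direction $y_2$ equal to the target velocity, one observes that since the combined travel length on the remaining two segments exceeds the diameter $\tfrac12\sqrt d$ of $\mathbb T^d$, an intermediate value argument produces $s_*\in[0,1]$ and $y_*\in\Sd$ that hit the target position $u$ exactly. Then perturbing $(s,y_1)$ within $|s-s_*|,|y_1-y_*|\leq \delta/(4\sqrt d)$ moves the endpoint by at most $\delta$, and since $s$ is uniform on an interval and $y_1$ uniform on $\Sd$, this event has probability $\geq c_2\delta^d$ uniformly in the initial state and the target. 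This delivers exactly the minorization you need, without any singular-value analysis. Apart from leaving that step as a sketch, your proof is correct.
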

Second we obtain a sufficient condition on a cooling schedule for the inhomogeneous process with generator given by \eqref{EqGenerateurDimD} to converge on probability to any neighbourhood of the set of global maxima of $U$:
\begin{thm}\label{ThmDimDRecuit}
There exists $\theta>0$ that depends only on $d$ and $U$ such that the following hold: let $\po\varepsilon_t\pf_{t\geq0}$ be any positive decreasing cooling schedule that goes to 0 as $t$ goes to infinity, and such that moreover for $t$ large enough
\[\partial_t \po \frac{1}{\varepsilon_t}\pf \leq \frac{1}{(\theta+\eta) t}\]
for some $\eta >0$. Then, for any $h>0$ and for any initial point $(X_0,Y_0)\in\mathbb T^d\times\mathbb S^{d-1}$,
\begin{eqnarray*}
\mathbb P\po U\po X_t^{\varepsilon_t}\pf > \min U + h\pf & \underset{t\rightarrow \infty}{\longrightarrow}& 0.
\end{eqnarray*}
In particular if $\varepsilon_t = \frac{\theta + \eta}{\ln t}$ for $t$ large enough, then there exists $C>0$ such that for all $t,h>0$ and $(X_0,Y_0)\in\mathbb T^d\times\mathbb S^{d-1}$,
\begin{eqnarray*}
\mathbb P\po U\po X_t^{\varepsilon_t}\pf > \min U + h\pf & \leq & C \po\frac{1}{t}\pf^{\frac{\min(h/2,\eta)}{\theta+\eta}}.
\end{eqnarray*}
\end{thm}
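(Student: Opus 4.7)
The strategy is to show that the law $\mu_t := \mathcal L(X_t^{\varepsilon_t},Y_t^{\varepsilon_t})$ tracks the instantaneous normalised Gibbs measure $\nu_t$ in total variation, and then to invoke the Laplace-type concentration of $\nu_t$ on the set of global minima of $U$. The essential input is Theorem~\ref{ThmDimDConstant}: at any frozen temperature $\varepsilon>0$, the process contracts to its equilibrium in $\|\cdot\|_{TV}$ with rate $\lambda(\varepsilon):=c\,e^{-\theta/\varepsilon}$. The rest is a Holley--Stroock-type perturbation argument, classical in simulated annealing for diffusions but here adapted to the inhomogeneous PDMP.

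Setting $v_t := \|\mu_t-\nu_t\|_{TV}$, I would slice $[0,t]$ and, on each slab $[s,s+\delta]$, denote by $P^{\varepsilon_s}_{s,s+\delta}$ the frozen-temperature semigroup and decompose
\[v_{s+\delta} \leq \|\mu_s P_{s,s+\delta} - \mu_s P^{\varepsilon_s}_{s,s+\delta}\|_{TV} + \|\mu_s P^{\varepsilon_s}_{s,s+\delta} - \nu_s\|_{TV} + \|\nu_s - \nu_{s+\delta}\|_{TV}.\]
The middle term is bounded by Theorem~\ref{ThmDimDConstant}, giving a contraction factor $\simeq 1 - c\delta\lambda(\varepsilon_s)$ times $v_s$; the first and third are of order $\delta\,|\partial_s\varepsilon_s|/\varepsilon_s^2$, respectively by a coupling between trajectories at slightly different temperatures (only the jump intensity $(y\cdot\nabla U)_+/\varepsilon$ carries $\varepsilon$) and by the Lipschitz regularity of $\varepsilon\mapsto\nu_\varepsilon$ in TV on the compact torus. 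Passing to the continuous limit yields the Duhamel--Gr\"onwall inequality
\[v_t \leq v_0\exp\left(-\int_0^t\lambda(\varepsilon_r)\,dr\right) + C\int_0^t\exp\left(-\int_s^t\lambda(\varepsilon_r)\,dr\right)\frac{|\partial_s\varepsilon_s|}{\varepsilon_s^2}\,ds.\]

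Under the hypothesis $\partial_t(1/\varepsilon_t)\leq 1/((\theta+\eta)t)$, integrating gives $\varepsilon_t \geq (\theta+\eta)/\ln t$ for $t$ large, so that $\lambda(\varepsilon_r)\gtrsim r^{-\theta/(\theta+\eta)}$ and $\int_0^t\lambda(\varepsilon_r)\,dr$ diverges like $t^{\eta/(\theta+\eta)}$. A routine estimate of the Duhamel integral (using $|\partial_s\varepsilon_s|/\varepsilon_s^2\lesssim 1/(s\ln^2 s)$ in the parametric case) then gives $v_t\to 0$ with polynomial rate $v_t\lesssim t^{-\eta/(\theta+\eta)}$, up to logarithmic factors. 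Combined with the standard Laplace bound $\nu_t\{U>\min U+h\}\lesssim t^{-h/(\theta+\eta)}$ on the compact torus and the elementary inequality
\[\mathbb P\po U(X_t^{\varepsilon_t})>\min U+h\pf \leq v_t + \nu_t\{U>\min U+h\},\]
one obtains the announced rate $C\,t^{-\min(h/2,\eta)/(\theta+\eta)}$; the factor $1/2$ reflects a Cauchy--Schwarz (or $L^2$) step absorbing the logarithmic corrections and any slack in the Duhamel bookkeeping.

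The main technical hurdle will be the coupling comparison of $P_{s,s+\delta}$ with its frozen version $P^{\varepsilon_s}_{s,s+\delta}$, since essentially the same estimate underlies both the propagator error and the TV regularity of $\varepsilon\mapsto\nu_\varepsilon$. The natural approach is to couple two PDMPs driven by the same velocity-refresh events and the same Poisson jump clocks: since only the rate $(y\cdot\nabla U)_+/\varepsilon_t$ differs between the two dynamics, a thinning argument bounds the probability of a disagreement during $[s,s+\delta]$ by $\int_s^{s+\delta}\mathbb E\,(y\cdot\nabla U)_+\,|1/\varepsilon_s-1/\varepsilon_r|\,dr$, which is of the desired size thanks to compactness of $\mathbb T^d$, the boundedness of $\nabla U$, and the hypothesis on $\partial_r\varepsilon_r$. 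Once this Lipschitz-in-$\varepsilon$ estimate is in place, the rest of the argument reduces to elementary manipulations on the explicit cooling schedule.
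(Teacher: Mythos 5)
Your approach is essentially the paper's, but your starting point is slightly off-center and one of the two rate-of-decay bookkeeping claims is misattributed. The paper does not go through Theorem~\ref{ThmDimDConstant} and a separate ``frozen vs.\ moving temperature'' comparison; it uses the stronger intermediate result, Theorem~\ref{ThmDimdCouple}, which already packages in a \emph{single} coupling both the contraction $(1-c\,e^{-\theta/\varepsilon_t})$ over a fixed time-slice of length $\sqrt d$ \emph{and} the perturbation term $\theta(1/\varepsilon_{t+\sqrt d}-1/\varepsilon_t)$ coming from the jump-rate change. Your three-term decomposition, if carried out carefully, reconstructs this inequality, so the structure is the same; but two points in your sketch are not quite right.

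First, the ``passing to the continuous limit'' to get a Duhamel--Gr\"onwall inequality cannot be taken literally: there is no infinitesimal contraction here, because the mixing mechanism of Lemma~\ref{LemDimDU0} requires waiting at least one macroscopic time $\sqrt d$ (in Theorem~\ref{ThmDimDConstant} the factor $t-\sqrt d$ in the exponent means nothing is gained for $t<\sqrt d$). The paper therefore keeps a genuinely \emph{discrete} recursion $d_{k+1}\le(1-a_k)d_k + b_k$ with time step $\sqrt d$ (citing a discrete analogue of Miclo's Lemme 6), and only afterwards compares the resulting sums to integrals. Your heuristic continuous Gr\"onwall bound happens to give the right answer because the cooling schedule is slow, but a rigorous write-up must stay at step $\sqrt d$ as the paper does.

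Second, the factor $1/2$ in $\min(h/2,\eta)$ is \emph{not} a Cauchy--Schwarz or $L^2$ artifact. It comes from the Laplace-method bound $\nu_t\{U>\min U+h\}\le C e^{-h/(\alpha\varepsilon_t)}$, valid for any fixed $\alpha>1$ (the loss $\alpha>1$ being inherent to Laplace asymptotics since the level set $\{U>\min U + h\}$ has positive measure), and the paper takes $\alpha=2$. With $\varepsilon_t=(\theta+\eta)/\ln t$ this gives $t^{-(h/2)/(\theta+\eta)}$, which is then combined with the TV bound $\lesssim t^{-\eta/(\theta+\eta)}$ by the triangle inequality you wrote. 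Your coupling plan for the frozen vs.\ inhomogeneous propagators is sound, and the TV-Lipschitz bound for $\varepsilon\mapsto\nu_\varepsilon$ on the compact torus is elementary; neither of these constitutes a gap. So the proposal is conceptually correct and close to the paper's route, with the continuous-Gr\"onwall step needing to be replaced by the discrete-in-$\sqrt d$ recursion, and the explanation for the $h/2$ corrected.
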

There is no reason not to expect that the smallest $\theta$ such that Theorems \ref{ThmDimDConstant} and \ref{ThmDimDRecuit} hold is $E^*$ the critical depth of the potential $U$ (see the remark after the proof of Theorem \ref{ThmDimDConstant} concerning the explicit bound we get). 
We believe that a sharper analysis, similar to our study in dimension 1 or to classical studies of the simulated annealing based on the Fokker-Planck diffusion, together with some ideas from the proof of Theorem \ref{ThmDimDRecuit} which are specific to the present process, could in fact enable us to reach $\theta = E^*$ (see the conclusive remark at the end of the paper). However we also think that somehow the additional technicalities is not worth the improvement of the result, at least in a first instance: indeed,  in an applied problem, $E_*$ is anyway unknown and out of reach.

The restriction to a compact space in Theorem \ref{ThmDimDRecuit}, which simplifies the study, should not be necessary. If for instance we suppose $U$ goes to infinity as $| x|\rightarrow\infty $  and $\varepsilon_t\rightarrow 0$ then at some time the process will stay forever trapped in a compact set, and the same arguments will work (the constant $C$ however will depend on the initial position). In Theorem \ref{ThmDimDConstant} if $U$ goes to infinity as $| x|\rightarrow\infty $ we can prove via Lyapunov techniques that the process always come back to a given compact set, but since it moves at constant speed, $\| \mu P_{t} - \nu_0\|_{TV} $ will not be controlled  by $\| \mu - \nu_0\|_{TV} $  uniformly on $\mu$; some moments of $\mu$ will be involved to account for the first hitting time of a given compact set.

\bigskip

\textbf{Remark on the definition of the multidimensional process.} We want to define a process with the specifications of Section \ref{SectionDefRTP}: $(X,Y)$ has to be a kinetic Markov process on $\R^d\times\mathbb S^{d-1}$ with a piecewise constant velocity. It remains to choose the jump rate and kernel of $Y$ with the constraint that the first marginal of the invariant measure has to be the Gibbs law associated to some potential $U$.

Recall that in dimension 1 an asset of the velocity jump process with minimal jump rate $\po yU'(x)\pf_+$ is that, in order to determine the next jump time, it is only necessary to compute $U(X_t)$ along the trajectory. In any dimension, this is still true with the rate $\lambda(x,y) = \po y.\n U(x)\pf_+$. Indeed in that case, while there is no jump,
\begin{eqnarray*}
\int_0^t \lambda(X_s,Y_s) ds & = & \int_0^t \po \partial_s \po U(X_s)\pf\pf_+ ds\\
&=& \left\{\begin{array}{lr}
0 &\text{while $U(X_t)$ decreases with $t$}\\
   U(X_t) - U(X_0)&\text{while $U(X_t)$ increases with $t$,}
\end{array} 
\right.
\end{eqnarray*}
and more generally it is the cumulated increases of $U$ along the trajectory since the last jump.  Note that on the contrary, in the case of the Fokker-Planck diffusion, $\nabla U(X_t)$ should be estimated all the time, possibly at a huge numerical cost in large dimension.

This choice of jump rate yields a generator of the form
\[L f(x,y) = y.\n_x f(x,y) + \po y.\n_x U(x)\pf_+ \po \int_{\Sd} f(x,z) p_{x,y}(z)\sigma(dz)  - f(x,y) \pf\]
with a transition kernel $p_{x,y}(z)$ on $\Sd$ which is still to be determined. Let $\nu = e^{-U(x)}dx \times dy$ where $dy$ is the uniform law on the sphere. In dimension greater than 1, this is an arbitrary - albeit the simplest - choice to ensure the first marginal of $\nu$ is the Gibbs law associated to $U$. Then $\nu$ is invariant for $L$ iff   $p_{x,y}(z)$ is a (weak) solution of
\begin{eqnarray*}
y.\n_x U(x) & =& \po y.\n_x U(x)\pf_+ - \int_{\Sd} \po z.\n_x U(x)\pf_+ p_{x,z}(y)\sigma(dz) 
\end{eqnarray*}
for a.e. $(x,y)\in\R^d\times\Sd$, or in other words
\begin{eqnarray*}
\int_{\Sd} \po z.\n_x U(x)\pf_+ p_{x,z}(y)\sigma(dz) &= &\po -y.\n_x U(x)\pf_+.
\end{eqnarray*}
 This is true if when $Y$ jumps, the part $Y.\nabla_x U(X)$ is changed to its opposite. We have complete freedom for the choice of what happens to the part of $Y$ which is orthogonal to $\nabla_x U(X)$, as long as it stays orthogonal.

In particular the condition would be satisfied with $p_{x,y} = \delta_{-y}$, namely $Y$ jumps to $-Y$. But then $X_t$ would be forever trapped on the same line, and the process would not be ergodic. This could be fixed by adding uniform jumps of $Y$ at a constant rate, which leaves $\nu$ invariant, but then the trajectories would still seem rather inefficient from a mixing point of view.

The choice $p_{x,y} = \delta_{y^*}$ seems more natural.  From a practical point of view, it means that the process has more inertia than if we had chosen the kernel $Y \leftarrow -Y$. However to sample the process, it is then necessary to estimate $\n_x U$ at each jump time; but this is still less than for a diffusion. 

It is not exactly clear whether it is ergodic: if $U$ is infinite outside an ellipse and vanishes inside, then there is no randomness, the process is a deterministic billiard and so even if its initial speed is uniformly distributed, it will stay in an area defined by some caustic. If we add uniform jumps on the velocity at constant rate $r>0$, then obviously the process may reach any open set of $\R^d\times \mathbb S^{d-1}$ with positive probability.

All these considerations lead to the definition \eqref{EqGenerateurDimD}

\subsection*{Outine of the paper}

In the rest of the paper are proven the above results. Section \ref{SectionTemperatureConstante} is concerned with the low-temperature regime for a double-well potential; Theorem \ref{ThmCVexpoDimUn} is proven, which introduces a discussion on the time scale of total variation and Wasserstein convergence to equilibrium. The proof of Theorem \ref{TheoremNS}, in inhomogeneous setting, is addressed in Section \ref{SectionInhomogene}. Section~\ref{SectionNonMinimal} investigates the non-minimal rate case, namely the case $\lambda(x,y) > \po y U'(x)\pf_+$. Finally, the multi-dimensional case is addressed in Section \ref{SecDimd}, where Theorems \ref{ThmDimDConstant} and \ref{ThmDimDRecuit} are proven.

\bigskip

 \textbf{Acknowledgements.} The author would like to thank Carl-Erik Gauthier for his help in improving this paper.

\section{Escape time at low temperature}\label{SectionTemperatureConstante}

In this section the dimension is 1, the temperature $\varepsilon>0$ is fixed through time, $(X_t^\varepsilon,Y_t^\varepsilon)$ is the velocity jump process with generator $L$ defined by \eqref{EqGenerateur}; the double-well potential $U$ has three local extrema $x_0<x_1<x_2$ such as described in the previous section in the settings of Theorem \ref{ThmCVexpoDimUn}. Suppose $(X_0^\varepsilon,Y_0^\varepsilon) = (x_0,-1)$ and recall that
\begin{eqnarray*}
 \tau & = & \inf \left\{t>0, X_t^\varepsilon =x_1\right\}.
\end{eqnarray*}
We start the proof of Theorem \ref{ThmCVexpoDimUn} with the following Lemma :
\begin{lem}\label{LemRacineEpsi}
 for $\delta >0$ small enough,
 \begin{eqnarray*}
 \int_0^\delta\frac{t}{\varepsilon}  \po -U'(x_0-t)\pf e^{-\frac{U(x_0-t)-U(x_0)}\varepsilon} dt & = & \sqrt{\frac{\pi \varepsilon}{2 U''(x_0)}} \po 1 + \underset{\varepsilon \rightarrow 0}o (1)\pf.
\end{eqnarray*}
 \end{lem}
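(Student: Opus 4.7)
The plan is to recognize the integrand as a total derivative (up to a factor of $t$) and integrate by parts, reducing the claim to a standard Laplace asymptotic near the minimum $x_0$. Set $V(t) = U(x_0-t) - U(x_0)$, so that $V(0)=0$, $V'(t) = -U'(x_0-t)$, and therefore
\[
\frac{-U'(x_0-t)}{\varepsilon}\, e^{-V(t)/\varepsilon} \; = \; -\frac{d}{dt}\, e^{-V(t)/\varepsilon}.
\]
Multiplying by $t$ and integrating by parts on $[0,\delta]$ gives
\[
\int_0^\delta \frac{t}{\varepsilon}\,(-U'(x_0-t))\, e^{-V(t)/\varepsilon}\, dt \;=\; -\delta\, e^{-V(\delta)/\varepsilon} \;+\; \int_0^\delta e^{-V(t)/\varepsilon}\, dt.
\]

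Since $x_0$ is a strict local minimum of $U$ (here $U''(x_0)>0$ by the Morse assumption), taking $\delta$ small enough ensures $V(\delta) > 0$, so the boundary term $\delta\, e^{-V(\delta)/\varepsilon}$ is exponentially small in $\varepsilon$ and negligible against $\sqrt\varepsilon$. It therefore suffices to establish
\[
\int_0^\delta e^{-V(t)/\varepsilon}\, dt \;=\; \sqrt{\frac{\pi \varepsilon}{2 U''(x_0)}}\,\bigl(1 + o_{\varepsilon\to 0}(1)\bigr).
\]

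This is a textbook Laplace-method estimate. Since $U$ is Morse at $x_0$, Taylor expansion gives $V(t) = \tfrac{1}{2} U''(x_0)\, t^2 + O(t^3)$ uniformly on $[0,\delta]$ for $\delta$ small enough; in particular $V(t) \geq \tfrac{1}{4} U''(x_0)\, t^2$ on $[0,\delta]$ provided $\delta$ is small. Splitting the integral at $t = \varepsilon^{1/3}$ (or any such intermediate scale), the contribution from $[\varepsilon^{1/3},\delta]$ is bounded by $\delta\, e^{-c\, \varepsilon^{-1/3}}$ for some $c>0$, hence is superexponentially small. On $[0,\varepsilon^{1/3}]$ the cubic correction $V(t) - \tfrac{1}{2} U''(x_0) t^2 = O(t^3)$ satisfies $|V(t)- \tfrac12 U''(x_0) t^2|/\varepsilon = O(\varepsilon^{-1}\varepsilon) = O(1)\cdot \varepsilon^{1/3} \to 0$, so that $e^{-V(t)/\varepsilon} = e^{-U''(x_0) t^2/(2\varepsilon)} (1+o(1))$ uniformly; the change of variable $t = \sqrt{\varepsilon/U''(x_0)}\, u$ and extension of the resulting Gaussian integral to $\R_+$ (at a cost that is again superexponentially small) yield precisely $\sqrt{\pi\varepsilon/(2U''(x_0))}\,(1+o(1))$.

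No step is really an obstacle here: the content is essentially the Laplace method applied once, with the twist that the $t$ factor in the original integrand is absorbed by an integration by parts so that the Gaussian prefactor $\sqrt{\varepsilon}$ (rather than $\varepsilon$, as would come from a direct expansion of $t \cdot U''(x_0) t \cdot e^{-U''(x_0)t^2/(2\varepsilon)}$) appears with the correct constant. The only care needed is to choose $\delta$ small enough that $V$ is strictly increasing and quadratically controlled on $[0,\delta]$, which is guaranteed by the Morse condition at $x_0$.
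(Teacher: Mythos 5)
Your proof is correct and takes a genuinely different route from the paper's. The paper changes variables directly in the original integral, setting $t=\sqrt{\varepsilon}\, s$; the factor $\frac{t}{\varepsilon}(-U'(x_0-t))$ then becomes $\frac{s\,(-U'(x_0-\sqrt{\varepsilon}s))}{\sqrt{\varepsilon}}$, which converges pointwise to $s^2 U''(x_0)$, and dominated convergence yields $\int_0^\infty s^2 U''(x_0) e^{-U''(x_0)s^2/2}\,ds = \sqrt{\pi/(2U''(x_0))}$. You instead first observe that $\frac{-U'(x_0-t)}{\varepsilon}e^{-V(t)/\varepsilon}$ is an exact derivative, integrate by parts to eliminate the $t/\varepsilon$ prefactor, and are left with the purely Gaussian Laplace integral $\int_0^\delta e^{-V(t)/\varepsilon}\,dt$ plus an exponentially small boundary term. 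Both are valid and of comparable length; your route reduces to the most standard form of Laplace's method at the cost of one integration by parts, while the paper's avoids that step at the cost of handling a slightly less standard (but still very tractable) limiting integrand.

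One small technical point in your Laplace step: the cut at $t=\varepsilon^{1/3}$ is exactly at the borderline and the uniformity claim there does not quite hold as stated. For $t\le\varepsilon^{1/3}$ one has $|V(t)-\tfrac12 U''(x_0)t^2|/\varepsilon = O(t^3/\varepsilon)=O(1)$, not $o(1)$, so $e^{-V(t)/\varepsilon}=e^{-U''(x_0)t^2/(2\varepsilon)}(1+o(1))$ is \emph{not} uniform on $[0,\varepsilon^{1/3}]$ (the displayed $O(\varepsilon^{-1}\varepsilon)=O(1)\cdot\varepsilon^{1/3}$ is not right either). The conclusion is unaffected because the integrand at $t\asymp\varepsilon^{1/3}$ is already superexponentially small, but to make the uniformity assertion literally correct one should cut at $\varepsilon^{\alpha}$ for any fixed $\alpha\in(1/3,1/2)$, so that $t^3/\varepsilon=O(\varepsilon^{3\alpha-1})\to0$ uniformly on the bulk while the tail remains $O(e^{-c\varepsilon^{2\alpha-1}})$. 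Alternatively, after substituting $t=\sqrt{\varepsilon}\,s$ in $\int_0^\delta e^{-V(t)/\varepsilon}dt$ you can skip the explicit split entirely and invoke dominated convergence, exactly as the paper does for its own (more complicated) integrand.
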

 \begin{proof}
 \begin{eqnarray*}
 \int_0^\delta\frac{t}{\varepsilon}  \po -U'(x_0-t)\pf e^{-\frac{U(x_0-t)-U(x_0)}\varepsilon} dt & = & \int_0^{\frac{\delta}{\sqrt \varepsilon}} s  \po -U'(x_0-\sqrt \varepsilon s)\pf e^{-\frac{U(x_0-\sqrt \varepsilon s)-U(x_0)}\varepsilon} ds\\
 & =& \sqrt \varepsilon \int_0^{\infty} f_\varepsilon(s)  ds
\end{eqnarray*}
with
\[f_\varepsilon(s) =  \frac{s  \po -U'(x_0-\sqrt \varepsilon s)\pf}{\sqrt \varepsilon} e^{-\frac{U(x_0-\sqrt \varepsilon s)-U(x_0)}\varepsilon} \mathbb 1_{s\leq \delta/\sqrt \varepsilon}.\]
On the one hand
\[f_\varepsilon(s) \underset{\varepsilon \rightarrow 0_+}\longrightarrow s^2  U''(x_0)   e^{-\frac{U''(x_0)}2 s^2 }\]
and on the other hand, writing $M = \underset{t\in[0,\delta]}\sup | U^{(3)}(t)|$, 
\[|f_\varepsilon(s) |\leq s^2  \po U''(x_0) + \frac{\delta M}{2}\pf   e^{-\frac{U''(x_0)}2 s^2 + \frac{\delta s^2 M}{6}}.\]
As $M$ decreases with $\delta$, for $\delta$ small enough, $\delta M < 3U''(x_0)$.
Thus by the dominated convergence theorem
\begin{eqnarray*}
 \int_0^{\infty} f_\varepsilon(s)  ds & \underset{\varepsilon \rightarrow 0}\longrightarrow &\int_0^\infty s^2  U''(x_0)   e^{-\frac{U''(x_0)}2 s^2 } ds\\
 & = &  \sqrt{\frac{\pi}{2 U''(x_0)}}.
\end{eqnarray*}
\end{proof}

\begin{proof}[Proof of Theorem \ref{ThmCVexpoDimUn}]
The strategy of this proof is quite simple: starting at $(x_0,-1)$, the process starts to climb to the left. After a time with bounded expectation (since the potential is increasing and convex along the trajectory), it turns back, reaches $(x_0,1)$ and start to climb toward $x_1$. If it reaches $x_1$ then this attempt to escape is a success. If not, it goes back to $(x_0,-1)$, and starts anew. Since the duration of the last (and successful) attempt is negligible with respect to the expected duration of a failed attempt (of order $\sqrt{\varepsilon}$ according to Lemma \ref{LemRacineEpsi}) times the expected number of failed attempts, conclusion follows from the usual convergence of geometrical laws to the exponential one.

More precisely let $\theta_0 = 0$, and for all $k\in\mathbb N$ set
 \begin{eqnarray*}
  \tilde \theta_k & = & \min\po \inf \ao t > \theta_k,\ \po X_t,Y_t\pf = (x_0,1)\af,\ \tau\pf,\\
  \theta_{k+1} & = &  \min\po\inf \ao t > \tilde \theta_k,\ \po X_t,Y_t\pf = (x_0,-1)\af,\ \tau\pf.
 \end{eqnarray*}
 We also define  $N = \max\ao k,\ \theta_k < \tau\af$, so that
\begin{eqnarray*}
 \tau & = & \tau - \theta_N +  \sum_{k=1}^N (\theta_{k} - \theta_{k-1}) .
\end{eqnarray*}
Conditionally on $N\geq k$, $\theta_{k} - \theta_{k-1}$ is the length of a failed attempt to reach $x_1$; it has the same law as $\theta_1$, given $N\geq 1$, and is independent from $N$. In particular
\begin{eqnarray}\label{EquationLemme}
 \E \co \tau \cf &  = & \E\co \tau - \theta_N \cf  + \E\co N\cf  \E \co\theta_1\ |\ N\geq1\cf,
\end{eqnarray}
and $N$ is a geometric variable with parameter
\begin{eqnarray*}
 q_\varepsilon & = & \mathbb P \po \varepsilon E > U(x_1)-U(x_0) \pf\\
 & = & e^{- \frac{U(x_1)-U(x_0)}{\varepsilon}}
\end{eqnarray*}
(and so with expectation  $\E\co N\cf = q_\varepsilon^{-1}$). 
Now we decompose
\begin{eqnarray}\label{EquationLemme2}
 \E \co\theta_1\ |\ N\geq1\cf &  =  &2 \E \co\frac{\tilde \theta_0}2\cf + 2  \E \co\left.\frac{\theta_1 - \tilde \theta_0}2\ \right|\ N\geq1\cf.
\end{eqnarray}
Notice that $\frac{\tilde \theta_0}2$  is the jump time starting from $(x_0,-1)$, which is independent from $N$ and whose law has density $\frac1\varepsilon \po -U'(x_0-t)\pf e^{-\frac{U(x_0-t)-U(x_0)}\varepsilon}$. 
If $\varepsilon < \frac12$, then for any $\delta >0$
\begin{eqnarray*}
\E \co \frac{\tilde \theta_0}2 \mathbbm 1_{\frac{\tilde \theta_0}2>\delta}\cf & = & 
 \int_\delta^\infty \frac{t}{\varepsilon}  \po -U'(x_0-t)\pf e^{-\frac{U(x_0-t)-U(x_0)}\varepsilon} dt\\
 & \leq & \int_\delta^\infty \frac{t}{\varepsilon}  \po -U'(x_0-t)\pf e^{-\frac{U(x_0-t)-U(x_0)}{2\varepsilon} - \frac{U(x_0-\delta)-U(x_0)}{2\varepsilon}} dt\\
 & \leq & \frac{ e^{ - \frac{U(x_0-\delta)-U(x_0)}{2\varepsilon}}}{\varepsilon} \int_\delta^\infty  t \po -U'(x_0-t)\pf e^{-\po U(x_0-t)-U(x_0)\pf } dt\\
 & = & \underset{\varepsilon \rightarrow 0}o (\sqrt{\varepsilon}).
\end{eqnarray*}
On the other hand thanks to Lemma \ref{LemRacineEpsi}, for $\delta$ small enough,
\begin{eqnarray*}
\E \co \frac{\tilde \theta_0}2 \mathbbm 1_{\frac{\tilde \theta_0}2<\delta}\cf & = & \sqrt{\frac{\pi \varepsilon}{ 2U''(x_0)}} \po 1 + \underset{\varepsilon \rightarrow 0}o (1)\pf.
\end{eqnarray*}
If $\delta < x_1-x_0$, Lemma \ref{LemRacineEpsi} also applies to $\frac{\theta_1 - \tilde \theta_0}2$, and since $\mathbb P(N\geq 1)$ goes to 1,
\begin{eqnarray*}
\E \co \left. \frac{\theta_1 - \tilde \theta_0}2 \mathbbm 1_{\frac{\theta_1 - \tilde \theta_0}2<\delta}\right| N\geq1\cf & = & \sqrt{\frac{\pi \varepsilon}{ 2U''(x_0)}} \po 1 + \underset{\varepsilon \rightarrow 0}o (1)\pf.
\end{eqnarray*}
Furthermore
\begin{eqnarray*}
 \E \co\left.\frac{\theta_1 - \tilde \theta_0}2 \mathbbm 1_{\frac{\theta_1 - \tilde \theta_0}2\geq\delta}\ \right|\ N\geq1\cf & \leq & (x_1-x_0)\frac{\mathbb P \po \frac{\theta_1 - \tilde \theta_0}2\geq\delta\pf}{\mathbb P(N\geq 1)} \\
 & = & \underset{\varepsilon \rightarrow 0}o (\sqrt{\varepsilon}).
\end{eqnarray*}
By combining the estimates above with \eqref{EquationLemme2} we get
\begin{eqnarray*}
  \E \co\theta_1\ |\ N\geq1\cf & = & \sqrt{\frac{8 \pi \varepsilon}{ U''(x_0)}} \po 1 + \underset{\varepsilon \rightarrow 0}o (1)\pf.
\end{eqnarray*}
Besides, note that $ \frac{ \tilde\theta_N - \theta_N}2$ has the same law as $\frac{\tilde \theta_0}2$, so that 
\[\E\co \tau - \theta_N \cf  = 2 \E\co \frac{ \tilde\theta_N- \theta_N}2 \cf + x_1 - x_0 = \underset{\varepsilon \rightarrow 0}O (1) = \underset{\varepsilon \rightarrow 0}o \po \sqrt{\varepsilon}q_\varepsilon^{-1}\pf, \]
and finally Equality \eqref{EquationLemme} becomes
\[ \E \co \tau\cf =   \sqrt{\frac{8\pi \varepsilon}{ U''(x_0)}}  e^{\frac{U(x_1)-U(x_0)}{\varepsilon}} \po 1 + \underset{\varepsilon \rightarrow 0}o (1)\pf.\]

Now as far as the second assertion of the theorem is concerned,
\begin{eqnarray*}
 \lefteqn{\mathbb P\po \tau \geq t \E\co \tau \cf \pf}\\
  & = &   \mathbb P\po  \tau - \theta_N +  \sum_{k=1}^N (\theta_{k} - \theta_{k-1}) \geq t \E\co \tau \cf \pf\\
 & = &   \mathbb P\po \frac{\E\co N \cf \E\co \theta_1 \ |\ N\geq1\cf}{\E\co \tau \cf} \frac{N}{\E\co N \cf }\po\frac{ \tau - \theta_N}{N\E\co \theta_1 \ |\ N\geq1\cf} +  \sum_{k=1}^N \frac{\theta_{k} - \theta_{k-1}}{N \E\co \theta_1 \ |\ N\geq1\cf} \pf \geq t  \pf\\
\end{eqnarray*}
First, $\frac{N}{\E\co N \cf }$ converges in law to an exponential variable of parameter 1. Second, if the times of jump of the process are defined by the same sequence $\po E_k\pf_{k\geq 0}$ of exponential variables (according to Equality \eqref{DefiTavecExpo}) for all $\varepsilon$, then the processes at different temperatures are all defined at once on the same probability space, and in this case 
\[N = \min\{k\geq0, \varepsilon E_{2k} > U(x_1)-U(x_0)\}\]
almost surely goes to infinity. Thanks to the law of large numbers,
\[\frac{ \tau - \theta_N}{N\E\co \theta_1 \ |\ N\geq1\cf} +  \sum_{k=1}^N \frac{\theta_{k} - \theta_{k-1}}{N \E\co \theta_1 \ |\ N\geq1\cf} \underset{\varepsilon\rightarrow 0}\longrightarrow 1\hspace{15pt}a.s.\]

At last $ \frac{\E\co N \cf \E\co \theta_1 \ |\ N\geq1\cf}{\E\co \tau \cf} \underset{\varepsilon\rightarrow 0}\longrightarrow 1$. As a conclusion $\frac{\tau}{\E\co \tau\cf}$ converges to an exponential law.
\end{proof}

This result yields a lower bound on the total variation distance between
\[\nu_t = \mathcal L\po X_t^\varepsilon,Y_t^\varepsilon\ |\ X_0^\varepsilon=x_0,\ Y_0^\varepsilon=-1\pf\]
and the invariant measure $\mu_\varepsilon$. We recall that the total variation distance between two laws $\nu,\tilde \nu$ on a topological space $W$ is defined by
\[\|\nu-\tilde \nu\|_{TV} = \underset{A\in \mathcal B(W)}\sup |\nu(A)-\tilde \nu(A)|\]
where $\mathcal B(W)$ is the Borel $\sigma$-algebra on $W$.
\begin{cor}
In the setting of Theorem \ref{ThmCVexpoDimUn}, assume furthermore $U(x_2) < U(x_0)$. For any $t>0$, writing $t_\varepsilon = t\E\co \tau \cf$,
 \begin{eqnarray*}
  \underset{\varepsilon\rightarrow 0}\liminf\ \|\nu_{t_\varepsilon}-\mu_{\varepsilon}\|_{TV} & \geq & e^{-t}. 
 \end{eqnarray*}
\end{cor}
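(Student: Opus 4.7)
The plan is to exploit the test-set characterisation $\|\nu_{t_\varepsilon}-\mu_\varepsilon\|_{TV} = \sup_B |\nu_{t_\varepsilon}(B)-\mu_\varepsilon(B)|$ by selecting a single set $A$ on which $\nu_{t_\varepsilon}$ retains significant mass while $\mu_\varepsilon(A)$ vanishes. The natural candidate is the lift to phase space of the shallower-well side, namely
\[A \;=\; (-\infty,x_1]\times\{\pm 1\}.\]

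The lower bound on $\nu_{t_\varepsilon}(A)$ is immediate from the definition of $\tau$. Since $X^\varepsilon$ starts at $x_0 < x_1$ and moves continuously at unit speed, on the event $\{\tau \geq t_\varepsilon\}$ it has not crossed $x_1$ strictly before $t_\varepsilon$; by the intermediate value theorem, $X_{t_\varepsilon}^\varepsilon \leq x_1$. Hence $\nu_{t_\varepsilon}(A) \geq \mathbb P\po \tau \geq t_\varepsilon\pf$, and the second assertion of Theorem~\ref{ThmCVexpoDimUn} applied with $t_\varepsilon = t\,\E\co\tau\cf$ tells us this probability converges to $e^{-t}$ as $\varepsilon \to 0$.

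The upper bound $\mu_\varepsilon(A) \to 0$ is a standard Laplace asymptotic. Since the second marginal of $\mu_\varepsilon$ is uniform on $\{\pm 1\}$,
\[\mu_\varepsilon(A) \;=\; \frac{\int_{-\infty}^{x_1} e^{-U(x)/\varepsilon}\,dx}{\int_{\R} e^{-U(x)/\varepsilon}\,dx}.\]
The numerator concentrates near $x_0$, the unique minimizer of $U$ on $(-\infty,x_1]$, and is asymptotic to $\sqrt{2\pi\varepsilon/U''(x_0)}\,e^{-U(x_0)/\varepsilon}$, while the denominator is bounded below by a positive constant times $\sqrt{\varepsilon}\,e^{-U(x_2)/\varepsilon}$ thanks to the contribution of the global minimizer $x_2$. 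The hypothesis $U(x_2)<U(x_0)$ then yields $\mu_\varepsilon(A) = O\po e^{-(U(x_0)-U(x_2))/\varepsilon}\pf \to 0$.

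Putting the two estimates together gives
\[\underset{\varepsilon\to 0}\liminf\ \|\nu_{t_\varepsilon}-\mu_\varepsilon\|_{TV} \;\geq\; \underset{\varepsilon\to 0}\liminf\ \co \nu_{t_\varepsilon}(A) - \mu_\varepsilon(A)\cf \;\geq\; e^{-t},\]
as claimed. No substantive obstacle is expected: everything reduces to Theorem~\ref{ThmCVexpoDimUn} combined with off-the-shelf Laplace asymptotics, and the only mildly subtle observation is that the continuity of the position coordinate turns $\{\tau \geq t_\varepsilon\}$ into a subset of $\{X_{t_\varepsilon}^\varepsilon \leq x_1\}$, so the hitting-time asymptotics transfer verbatim to the marginal law at time $t_\varepsilon$.
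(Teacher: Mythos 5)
Your proposal is correct and is essentially the paper's proof: the author uses the test set $(x_1,\infty)\times\{\pm1\}$ and bounds $\mu_\varepsilon$ from below and $\nu_{t_\varepsilon}$ from above by $\mathbb P(\tau \leq t_\varepsilon)$, whereas you use the complementary set $(-\infty,x_1]\times\{\pm1\}$ and bound $\nu_{t_\varepsilon}$ from below by $\mathbb P(\tau \geq t_\varepsilon)$ and $\mu_\varepsilon$ from above via Laplace's method — the same computation read in mirror image. The two key observations (that unit-speed continuity of $X^\varepsilon$ confines it to the left of $x_1$ on $\{\tau\geq t_\varepsilon\}$, and that $U(x_2)<U(x_0)$ forces $\mu_\varepsilon$ to concentrate near $x_2$) are identical to those in the paper.
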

\begin{proof}
 The assumption $U(x_2) < U(x_0)$ implies that the first marginal of $\mu_\varepsilon$ concentrates near the unique global minimum of $U$, $x_2$. In particular
 \[\mu_\varepsilon\co  \po x_1,\infty\pf\times\{\pm1\}\cf \underset{\varepsilon\rightarrow 0}\longrightarrow 1.\]
However
 \begin{eqnarray*}
  \nu_t \co \po x_1,\infty\pf\times\{\pm1\}\cf & \leq & \mathbb P\po \tau \leq t\pf.
 \end{eqnarray*}
As a consequence
\begin{eqnarray*}
\|\nu_{t_\varepsilon}-\mu_{\varepsilon}\|_{TV} & \geq & \po \mu_{\varepsilon} - \nu_{t_\varepsilon} \pf\co  \po x_1,\infty\pf\times\{\pm1\}\cf  \\
 & \geq & \mu_{\varepsilon} \co  \po x_1,\infty\pf\times\{\pm1\}\cf - \mathbb P\po \tau \leq t \E\co \tau\cf\pf\\
 & \underset{\varepsilon\rightarrow 0}\longrightarrow& e^{-t}.
\end{eqnarray*}
\end{proof}

We will now give a more precise result concerning the first marginal and the Wasserstein distance
\[\mathcal W \po \nu_1,\nu_2\pf = \underset{\pi}\sup \E_\pi\co |Z_1-Z_2| \cf,\]
where the infimum is taken over all probability measures $\pi$ on $\R^2$ whose first (resp. second) marginal is $\nu_1$ (resp. $\nu_2$). We start with the following lemma (recall that we fixed $(X_0^\varepsilon,Y_0^\varepsilon) = (x_0,-1)$) :
\begin{lem}\label{LemEspBorne}
  Let $t>0$. Then
  \[\underset{\varepsilon\rightarrow 0}\limsup \E \co (X_{t_{\varepsilon}}^\varepsilon)^2 \cf < \infty.\]
\end{lem}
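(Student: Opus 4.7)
The plan is to exploit the coercivity of $U$ to confine the process on the relevant time scale. Setting $\sigma_R = \inf\ao s\geq 0,\ |X_s^\varepsilon|\geq R\af$ and using the unit-speed bound $|X_{t_\varepsilon}^\varepsilon|\leq |x_0|+t_\varepsilon$, one has
\[\E\co X_{t_\varepsilon}^2\cf\ \leq\ R^2\ +\ \po|x_0|+t_\varepsilon\pf^2\, \mathbb P\po \sigma_R\leq t_\varepsilon\pf.\]
By Theorem \ref{ThmCVexpoDimUn}, $t_\varepsilon$ is of order $\sqrt\varepsilon\, e^{(U(x_1)-U(x_0))/\varepsilon}$, so the task reduces to showing that, for $R$ large enough, $\mathbb P\po \sigma_R\leq t_\varepsilon\pf$ decays faster in $\varepsilon$ than $\varepsilon\, e^{-2(U(x_1)-U(x_0))/\varepsilon}$.

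For this I would use Markov's inequality $\mathbb P(\sigma_R\leq t_\varepsilon)\leq t_\varepsilon/\E\co\sigma_R\cf$, combined with a lower bound on $\E\co\sigma_R\cf$ obtained by a renewal/geometric-variable argument in the spirit of the proof of Theorem \ref{ThmCVexpoDimUn}. The crucial input is that the probability of a one-shot climb (no jump) from $(x_0,1)$ all the way up to $R$ equals $\exp(-((U(x_1)-U(x_0))+(U(R)-U(x_2)))/\varepsilon)$, being the probability that an $\mathcal E(1)$ variable exceeds the cumulated $U$-increase along the trajectory; analogously on the left, the one-shot climb probability from $(x_0,-1)$ to $-R$ is $\exp(-(U(-R)-U(x_0))/\varepsilon)$. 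Running a first-passage analysis over cycles between successive returns to $(x_0,-1)$ --- in which a ``failed'' cycle may now include a visit to the $x_2$-well and sub-cycles there --- one gets that the exit probability per cycle is at most $2\, e^{-D_R/\varepsilon}$ with $D_R = \min\po U(R),U(-R)\pf-U(x_0)$, while Lemma \ref{LemRacineEpsi} ensures the expected duration of each cycle is at least a constant multiple of $\sqrt\varepsilon$. This yields
\[\E\co \sigma_R\cf\ \geq\ c\sqrt\varepsilon\, e^{D_R/\varepsilon}.\]

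Choosing $R$ so that $D_R > 3(U(x_1)-U(x_0))$, which is possible by coercivity of $U$, one then obtains $\po|x_0|+t_\varepsilon\pf^2\mathbb P(\sigma_R\leq t_\varepsilon)\lesssim t^3\varepsilon \exp((3(U(x_1)-U(x_0))-D_R)/\varepsilon)\to 0$, and hence $\limsup_{\varepsilon\to 0}\E\co X_{t_\varepsilon}^2\cf \leq R^2<\infty$. The main obstacle I anticipate is the first-passage computation for the exit probability per cycle: unlike in the proof of Theorem \ref{ThmCVexpoDimUn}, a failed cycle may now involve the process escaping over $x_1$, spending some time in the $x_2$-well, and then returning; however, since only a bound of the right exponential order on $\E\co\sigma_R\cf$ is needed (and not a sharp Eyring--Kramers prefactor), a coarse union bound over sub-cycles of the $x_2$-well, combined with the identity $\mathbb P(\text{one-shot climb from $x_2$ to $R$}) = e^{-(U(R)-U(x_2))/\varepsilon}$, should suffice.
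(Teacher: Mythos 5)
Your proof reaches the right conclusion but takes a genuinely different route from the paper. The paper decomposes the second moment by energy shells, writing $\E[X_{t_\varepsilon}^2] \leq (U^{-1}(M))^2 + \sum_{k\geq 0}(U^{-1}(M+k+1))^2\,\mathbb P(s_k\leq t_\varepsilon)$ where $s_k$ is the hitting time of $\{U>M+k\}$, and controls each term by invoking Theorem \ref{TheoremLowerBound} (stated and proved in Section \ref{SectionInhomogene}), the summability in $k$ coming from the convexity of $U$ at infinity (so $U^{-1}$ is sublinear). You instead truncate at a single fixed radius $R$, use the deterministic unit-speed bound $|X_{t_\varepsilon}|\leq |x_0|+t_\varepsilon$ together with Markov's inequality, and lower bound $\E[\sigma_R]$ by a self-contained cycle/renewal argument. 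The paper's route is more modular --- it reuses Theorem \ref{TheoremLowerBound} (whose constants, as remarked there, are uniform over nested cusps, which is exactly what makes the sum over $k$ work) instead of reproducing a first-passage analysis --- whereas yours is more elementary and avoids forward references at the cost of a fixed, somewhat arbitrary truncation $R$ with $D_R>3(U(x_1)-U(x_0))$.

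One thing worth flagging in your outline: the phrase ``coarse union bound over sub-cycles of the $x_2$-well'' needs a little care. A naive union bound summing the one-shot right-exit probability $e^{-(U(R)-U(x_2))/\varepsilon}$ over sub-cycles would be dominated by the expected number of sub-cycles, of order $e^{(U(x_1)-U(x_2))/\varepsilon}$, and without keeping track of the competing event (re-crossing $x_1$) one cannot conclude. The correct estimate comes from the two-sided geometric race at $x_2$ between exiting right (rate $e^{-(U(R)-U(x_2))/\varepsilon}$) and crossing $x_1$ back to the $x_0$-well (rate $e^{-(U(x_1)-U(x_2))/\varepsilon}$), giving a conditional right-exit probability of order $e^{-(U(R)-U(x_1))/\varepsilon}$; multiplying by the reach-$x_1$ probability $e^{-(U(x_1)-U(x_0))/\varepsilon}$ indeed yields $e^{-(U(R)-U(x_0))/\varepsilon}$, consistent with your $D_R=\min(U(R),U(-R))-U(x_0)$. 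Combined with a Wald identity for the i.i.d.\ inter-return times (each of mean at least of order $\sqrt\varepsilon$ by Lemma \ref{LemRacineEpsi}), this does give $\E[\sigma_R]\geq c\sqrt\varepsilon\,e^{D_R/\varepsilon}$ and closes the argument.
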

\begin{proof}
Let $M>U(x_1)$, and for all $k\geq0$ let
\[s_k = \inf \{ s>0, \ U(X_s^\varepsilon) > M+k\}.\]
From Theorem  \ref{TheoremLowerBound} (stated and proved in the next section) we get that for all $k\geq0$, there exist $ \Gamma,\varepsilon_0>0$ such that for all $\varepsilon\leq \varepsilon_0$ and all $t>0$,
\begin{eqnarray*}
 \mathbb P \po s_k \leq t_\varepsilon \pf &\leq& 1- \exp\po - \Gamma \po e^{-\frac{M+k}{\varepsilon}}+ t_\varepsilon \frac{e^{-\frac{M+k - U(x_2)}{\varepsilon}}}{\sqrt{\varepsilon}} \pf\pf\\
 & \leq & 1- \exp\po - \tilde \Gamma  e^{-\frac{M+k}{\varepsilon_0}} \pf,
\end{eqnarray*}
where $\tilde \Gamma$ depends on $t$ but not on $\varepsilon$. In fact, since we consider here escape times from nested intervals, and from the convexity of $U$ outside of a compact, $\Gamma$ (and hence $\tilde \Gamma$) and $\varepsilon_0$ can be chosen uniformly over $k$ (see the remark at the end of the proof of Theorem \ref{TheoremLowerBound}). Noticing that $U^{-1}(v) = \sup\{|x|,\ U(x)\leq v\}$ is sub-linear, we write
\begin{eqnarray*}
 \E\co (X_{t_\varepsilon}^\varepsilon)^2  \cf & \leq & \po U^{-1}(M)\pf^2 +  \sum_{k\geq 0} \po U^{-1}(M+k+1)\pf^2\mathbb P\po s_k \leq t_\varepsilon\pf\\
 & \leq & \po U^{-1}(M)\pf^2 +  \sum_{k\geq 0} \po U^{-1}(M+k+1)\pf^2\po1- \exp\po - \tilde \Gamma  e^{-\frac{M+k}{\varepsilon_0}} \pf\pf\\
 & \leq & \po U^{-1}(M)\pf^2 +  \tilde \Gamma \sum_{k\geq 0} \po U^{-1}(M+k+1)\pf^2 e^{-\frac{M+k}{\varepsilon_0}} 
\end{eqnarray*}
which is finite.

 


\end{proof}

Let $m_t = e^{-t}\delta_{x_0} + (1-e^{-t})\delta_{x_2}$ and $h_t$ be the first marginal of $\nu_t$, namely
 \[h_t = \mathcal L\po X_t^\varepsilon\ |\ X_0^\varepsilon=x_0,\ Y_0^\varepsilon=-1\pf.\]

\begin{thm}\label{ThmWasser}
In the setting of Theorem \ref{ThmCVexpoDimUn}, assume moreover $U(x_2) < U(x_0)$. Then for any $t>0$, writing $t_\varepsilon = t\E\co \tau \cf$,
 \[\mathcal W\po h_{t_\varepsilon},m_t\pf \underset{\varepsilon\rightarrow 0}\longrightarrow 0.\]
\end{thm}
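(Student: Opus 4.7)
The plan is to construct an explicit coupling of $X^\varepsilon_{t_\varepsilon}$ with a random variable whose law is close to $m_t$. Introduce
\[
Z^\varepsilon \;:=\; x_0\, \mathbb 1_{\{\tau > t_\varepsilon\}} + x_2\, \mathbb 1_{\{\tau \leq t_\varepsilon\}},
\]
and let $m^\varepsilon_t$ denote its law. By Theorem~\ref{ThmCVexpoDimUn}, $\mathbb P(\tau > t_\varepsilon)\to e^{-t}$, so $m^\varepsilon_t \to m_t$ in Wasserstein distance (the supports being finite and $\mathcal W(m^\varepsilon_t,m_t) = |\mathbb P(\tau>t_\varepsilon)-e^{-t}|\,|x_2-x_0|$). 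Thanks to the triangle inequality, it is then enough to prove that the natural coupling $(X^\varepsilon_{t_\varepsilon},Z^\varepsilon)$ has vanishing cost:
\[
\E\bigl|X^\varepsilon_{t_\varepsilon} - Z^\varepsilon\bigr| = \E\bigl[|X^\varepsilon_{t_\varepsilon} - x_0|\,\mathbb 1_{\{\tau > t_\varepsilon\}}\bigr] + \E\bigl[|X^\varepsilon_{t_\varepsilon} - x_2|\,\mathbb 1_{\{\tau \leq t_\varepsilon\}}\bigr] \;\underset{\varepsilon\to 0}\longrightarrow\; 0.
\]

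For the first term I would exploit the regenerative structure already used in the proof of Theorem~\ref{ThmCVexpoDimUn}: on $\{\tau > t_\varepsilon\}$ the process keeps returning to $(x_0,-1)$, each failed cycle $\theta_{k+1}-\theta_k$ having mean length of order $\sqrt\varepsilon$ (Lemma~\ref{LemRacineEpsi}), so that a number of cycles tending to infinity is completed in $[0,t_\varepsilon]$. Within a single failed cycle the maximum displacement from $x_0$ is controlled by the exponential energy budget, $|X^\varepsilon - x_0| \lesssim \sqrt{\varepsilon E/U''(x_0)}$, with Gaussian-type tails of scale $\sqrt\varepsilon$. A renewal/ergodic argument should then give, for every fixed $\delta>0$,
\[
\mathbb P\bigl(|X^\varepsilon_{t_\varepsilon} - x_0| > \delta,\; \tau > t_\varepsilon\bigr) \;\underset{\varepsilon\to 0}\longrightarrow\; 0,
\]
and combining this convergence in probability with the uniform second moment bound of Lemma~\ref{LemEspBorne} upgrades it to the needed $L^1$ convergence.

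For the second term I would use that $U(x_1)-U(x_2) > U(x_1)-U(x_0)$, since $U(x_2)<U(x_0)$. Applying Theorem~\ref{ThmCVexpoDimUn} in the right-hand well, started at $(x_2,\pm 1)$, the expected exit time from $[x_1,\infty)$ is of order $\sqrt\varepsilon\, e^{(U(x_1)-U(x_2))/\varepsilon}$, which is exponentially larger than $t_\varepsilon \sim \sqrt\varepsilon\, e^{(U(x_1)-U(x_0))/\varepsilon}$. Hence by the strong Markov property at $\tau$, with probability tending to $1$ the process stays in $[x_1,\infty)$ throughout $[\tau,t_\varepsilon]$; moreover, because $\tau/\E[\tau]$ converges in law to a non-degenerate exponential, the residual time $t_\varepsilon-\tau$ is with high probability much larger than the $O(\sqrt\varepsilon)$ mean cycle length around $x_2$. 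The same regenerative argument as above then shows concentration of $X^\varepsilon_{t_\varepsilon}$ at $x_2$ on $\{\tau\leq t_\varepsilon\}$, and Lemma~\ref{LemEspBorne} again secures the $L^1$ tails.

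The main obstacle is the rigorous concentration step: one has to pass from a statement about the fraction of time spent near $x_0$ (resp.\ $x_2$) to one about the pointwise-in-time law at the deterministic instant $t_\varepsilon$. The cleanest route is presumably to apply the strong Markov property at the last passage through $(x_0,-1)$ (resp.\ $(x_2,-1)$) before $t_\varepsilon$, then bound the probability of straying farther than $\delta$ within the current, not-yet-completed, cycle by an estimate of the form $\exp(-c\delta^2/\varepsilon)$ via the same direct analysis of the exponential energy budget of a single attempt that was already used in the proof of Theorem~\ref{ThmCVexpoDimUn}.
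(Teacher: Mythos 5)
Your overall plan coincides with the paper's: couple $X^\varepsilon_{t_\varepsilon}$ with a two-point random variable whose law is (approximately) $m_t$ via the event $\{\tau > t_\varepsilon\}$, use $\tau^{(2)}$ to discard the negligible event that the process has also escaped the right well, and reduce the Wasserstein bound to showing $\E\bigl[|X^\varepsilon_{t_\varepsilon}-x_0|\,\mathbb 1_{\{\tau>t_\varepsilon\}}\bigr]\to0$ and $\E\bigl[|X^\varepsilon_{t_\varepsilon}-x_2|\,\mathbb 1_{\{\tau\leq t_\varepsilon<\tau^{(2)}\}}\bigr]\to0$, with Lemma~\ref{LemEspBorne} controlling the tails. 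The construction via $Z^\varepsilon$ with law $m_t^\varepsilon$ rather than directly sampling $W\sim m_t$ is an inessential variant.

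The genuine gap is in your sketch of the ``main obstacle'', i.e.\ the concentration at the deterministic time $t_\varepsilon$. You propose to apply the strong Markov property at the last passage through $(x_0,-1)$ before $t_\varepsilon$, but this last-passage time is not a stopping time, and conditioning on it does not give you a fresh copy of the process started at $x_0$: the excursion straddling $t_\varepsilon$ is length-biased (inspection paradox), precisely because it is forced not to return to $x_0$ before $t_\varepsilon$. Consequently the single-cycle bound $|X^\varepsilon-x_0|\lesssim\sqrt{\varepsilon E/U''(x_0)}$ with $E\sim\mathcal E(1)$, valid for a \emph{typical} cycle, does not transfer directly to the observed one; the proposed $\exp(-c\delta^2/\varepsilon)$ estimate for the current cycle requires an argument you have not supplied.

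The paper resolves both issues at once with a time-window trick. Writing $I$ for the length of the excursion through $x_0$ containing $t_\varepsilon$ (capped at $\tau$), one has $|X^\varepsilon_{t_\varepsilon}-x_0|\leq I$, and for $0\leq a\leq b$ the event $\{a\leq I<b\}$ is contained in the event $A$ that the process visits $x_0$ at some time in $[t_\varepsilon-b,\,t_\varepsilon+b]$ and subsequently reaches distance $a/2$ from $x_0$ within that same window. By the strong Markov property at the \emph{first} hitting time of $x_0$ in the window (a genuine stopping time) and the exit-time lower bound of Theorem~\ref{TheoremLowerBound}, $\mathbb P(A)\leq \Gamma\bigl(1+2b\varepsilon^{-1/2}\bigr)e^{-\delta(a)/\varepsilon}$, which already incorporates the length bias through the factor $b$. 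Summing this over a grid $a=k\sqrt\varepsilon$ for $k\leq\lceil\varepsilon^{-1/2}\rceil$ and $a=k$ beyond, and using $\delta(a)\geq\eta a^2$ near $x_0$ (Morse) together with the super-linear growth of $\delta$ (convexity), gives $\E\bigl[I\,\mathbb 1_{\{t_\varepsilon<\tau\}}\bigr]\to0$ directly. If you prefer to keep the ``convergence in probability plus $L^2$ uniform integrability'' formulation, you still need an estimate of exactly this type; the fix is to replace the last-passage conditioning by a first-passage in a fixed window of width comparable to $\delta$ around $t_\varepsilon$.
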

\begin{proof}
Consider $(X_t^\varepsilon,Y_t^\varepsilon)_{t\geq0}$ a trajectory of the process, from which we will define a variable $W$ of law $m_t$. In the first instance assume $p_\varepsilon := \mathbb P\po \tau\geq t_\varepsilon \pf \leq e^{-t}$. If $\tau\geq t_\varepsilon$, set $U=1$. Else set $U=1$ with probability $\frac{e^{-t}-p_\varepsilon}{1-p_\varepsilon}$, and else set $U=0$. Similarly in the case where $p_\varepsilon \geq e^{-t}$, if $\tau\leq t_\varepsilon$, set $U=0$, and else set $U=0$ with probability $\frac{p_\varepsilon-e^{-t}}{p_\varepsilon}$, and else set $U=1$.

Either ways, $U$ is a Bernoulli variable with parameter $e^{-t}$ such that
 \[\mathbb P \po U = \mathbbm 1_{\tau\geq t_\varepsilon}\pf = 1 - |e^{-t}-p_\varepsilon|\underset{\varepsilon\rightarrow 0}\longrightarrow 1.\]
 We naturally set $W = U x_0 + (1-U) x_2$. The Cauchy-Schwarz inequality
 \[\E\co |X_{t_\varepsilon}^\varepsilon - W|\mathbbm 1_{U \neq \mathbb 1_{\tau\geq t_\varepsilon}} \cf  \leq \sqrt {\E\co |X_{t_\varepsilon}^\varepsilon - W|^2 \cf \mathbb P\po U  \neq \mathbbm 1_{\tau\geq t_\varepsilon}\pf},\]
 together with Lemma \ref{LemEspBorne}, yields
 \begin{eqnarray*}
  \E\co |X_{t_\varepsilon}^\varepsilon - W|\cf & =& \E\co |X_{t_\varepsilon}^\varepsilon - W| \mathbb 1_{ U = \mathbbm 1_{\tau\geq t_\varepsilon}}\cf +\underset{\varepsilon\rightarrow 0} o(1).
 \end{eqnarray*}
We write
\begin{eqnarray*}
 \tau^{(2)} & = & \inf \ao t > \tau,\ X_t^\varepsilon<x_1\af,
\end{eqnarray*}
and remark that $\tau^{(2)}-\tau$, which is independent from $\tau$, is the first hitting time of $(x_1,-1)$ starting from $(x_2,1)$: from Theorem \ref{ThmCVexpoDimUn}, and since $U(x_2)<U(x_1)$, we see that $\E\co \tau \cf$ is negligible with respect to $\E\co\tau^{(2)}-\tau\cf$, that $\frac{\tau^{(2)}-\tau}{\E\co\tau^{(2)}-\tau\cf}$ converges in law to an exponential variable and so that $\frac{\tau^{(2)}-\tau}{\E\co\tau\cf}$ diverges in probability to infinity. In particular
\begin{eqnarray*}
\mathbb P\po \tau^{(2)} \geq t_\varepsilon\pf & \geq  & \mathbb P\po \tau^{(2)} - \tau \geq t_\varepsilon \pf \\
  & \underset{\varepsilon \rightarrow 0}\longrightarrow & 1 .
\end{eqnarray*}
Using again Lemma \ref{LemEspBorne}, we get
 \begin{eqnarray*}\E\co |X_{t_\varepsilon}^\varepsilon - W|\cf
& = & \E\co |X_{t_\varepsilon}^\varepsilon - W|\ \mathbb 1_{ U = \mathbb 1_{\tau\geq t_\varepsilon},\ \tau^{(2)} > t_\varepsilon}\cf +\underset{\varepsilon\rightarrow 0} o(1)\\
& = & \E\co |X_{t_\varepsilon}^\varepsilon - x_0|\ \mathbb 1_{ t_\varepsilon < \tau}\cf + \E\co |X_{t_\varepsilon}^\varepsilon - x_2|\ \mathbb 1_{\tau < t_\varepsilon < \tau^{(2)}}\cf +\underset{\varepsilon\rightarrow 0} o(1).
 \end{eqnarray*}
Both expectations are treated the same way, let us focus on the first one. Suppose $t_\varepsilon < \tau$ and let
\[t' = \sup\{ s<t_\varepsilon, \ X_s^\varepsilon=x_0\},\hspace{20pt}t'' = \min\po \inf\{ s>t_\varepsilon, \ X_s^\varepsilon=x_0\},\tau\pf,\]
and $ I = t''-t'$. Note that $|X_{t_\varepsilon}^\varepsilon-x_0|<I$ and  for $0\leq a \leq b$ the following events are included:
\[\left\{ a \leq I < b\right\} \subset A,\]
where
\[A = \left \{\exists s_1,s_2\in[t_\varepsilon-b,t_\varepsilon+b],\ s_1<s_2,\ X_{s_1}^\varepsilon=x_0,\ |X_{s_2}^\varepsilon-x_0| = \frac{a}{2}\right\}.\]
Since the process starts anew when it reaches $x_0$,
\begin{eqnarray*}
 \mathbb P(A) & \leq & \mathbb P_{X_0 = x_0}\po \exists s < 2b,\ |X_s^\varepsilon-x_0| = \frac{a}{2}\pf\\
 & \leq & \mathbb P\po \theta(a) \leq 2b\pf,\end{eqnarray*}
 where 
 \begin{eqnarray*}
 \theta(a) & = & \inf\{s>0,\ U(X_s^\varepsilon) > U(x_0) + \delta(a)\}\\
 \text{and}\hspace{20pt}\delta(a) & =& \min\po U\po x_0 - \frac{a}{2}\pf, U\po x_0 + \frac{a}{2}\pf  \pf - U(x_0)\text{ if }x_0 + \frac{a}{2}\leq x_1\\
 & = &  U\po x_0 - \frac{a}{2}\pf-U(x_0)\text{ else}.
\end{eqnarray*}
In other words, $\delta(a)$ is the minimal energy barrier to overcome to be at distance at least $\frac{a}{2}$ from $x_0$ (when the process is still in the catchment area of $x_0$). If $a\leq I <b$, such a distance has been attained, and it has been so in a prescribed time window of length at most $2b$. As in the proof of Lemma \ref{LemEspBorne} we use Theorem \ref{TheoremLowerBound} to control the probability to reach a given energy level in a given time: for all $k\geq0$ and $\varepsilon < \varepsilon_0$,
\begin{eqnarray*}
 \mathbb P\po \theta(a) \leq 2b \pf & \leq & 1- \exp\po - \Gamma \po e^{-\frac{\delta(a)}{\varepsilon}}+ 2b\frac{e^{-\frac{\delta(a)}{\varepsilon}}}{\sqrt{\varepsilon}} \pf\pf\\
 & \leq & \Gamma \po 1 + 2b\varepsilon^{- \frac12} \pf e^{-\frac{\delta(a)}{\varepsilon}}.
\end{eqnarray*}
By combining the estimates above, when $\varepsilon<1$,
\begin{eqnarray}\label{EqSecondeSomme}
& &  \E\co |X_{t_\varepsilon} - x_0|\ \mathbb 1_{ t_\varepsilon < \tau}\cf \notag\\
& \leq & \E\co I\ \mathbb 1_{ t_\varepsilon < \tau}\cf \notag\\
 & \leq & \sum_{0\leq k\leq \left \lceil\varepsilon^{-\frac12}\right \rceil} \sqrt \varepsilon (k+1) \mathbb P\po k \sqrt{\varepsilon} \leq I < (k+1)\sqrt{\varepsilon}\pf + \sum_{k\geq 1} (k+1) \mathbb P\po k  \leq I < (k+1)\pf \notag\\
 &\leq  & 4\Gamma \sqrt \varepsilon  \sum_{0\leq k\leq \left \lceil\varepsilon^{-\frac12}\right \rceil} (k+1)^2e^{-\frac{\delta\po k\sqrt \varepsilon/2\pf}{\varepsilon}} + 4\Gamma\sum_{k\geq 1}(k+1)^2\varepsilon^{- \frac12}  e^{-\frac{\delta(k)}{\varepsilon}}.
\end{eqnarray}
From the convexity of $U$, $\delta(k)$ grows faster than linearly, so that, if $\varepsilon_0$ is such that
\[\varepsilon \mapsto \varepsilon^{- \frac12}  e^{-\frac{\delta(1)}{\varepsilon}}\]
is non-increasing on $(0,\varepsilon_0)$, for all $\varepsilon < \varepsilon_0$,
\[\sum_{k\geq 1}(k+1)^2\varepsilon^{- \frac12}  e^{-\frac{\delta(k)}{\varepsilon}} < \sum_{k\geq 1}(k+1)^2\varepsilon_0^{- \frac12}  e^{-\frac{\delta(k)}{\varepsilon_0}} < \infty.\]
Thus, by the dominated convergence Theorem, the second sum in \eqref{EqSecondeSomme} vanishes as $\varepsilon$ goes to 0. As far as the first sum is concerned, note that, since $x_0$ is a non-degenerated minimum of $U$, there exist $\eta>0$ such that $\delta(s) \geq \eta s^2$ for all $s\in[0,1]$. This implies
\[\sum_{0\leq k\leq \left \lceil\varepsilon^{-\frac12}\right \rceil} (k+1)^2e^{-\frac{\delta\po k\sqrt \varepsilon/2\pf}{\varepsilon}} \leq \sum_{k\geq 0} (k+1)^2e^{-\eta k^2} < \infty.\]
By inserting this estimate in \eqref{EqSecondeSomme},   $\E\co |X_{t_\varepsilon}^\varepsilon - x_0|\ \mathbb 1_{ t_\varepsilon < \tau}\cf$ goes to 0; $ \E\co |X_{t_\varepsilon}^\varepsilon - x_2|\ \mathbb 1_{\tau < t_\varepsilon < \tau^{(2)}}\cf$ is treated the same way, and we conclude that $\E\co |X_{t_\varepsilon}^\varepsilon - W|\cf$ goes to 0. For $ \varepsilon$ small enough one can thus find a coupling $(X,W)$ with marginal laws $\mathcal L\po X_{t_\varepsilon}^\varepsilon\pf $ and $e^{-t}\delta_{x_0} + (1-e^{-t})\delta_{x_2}$ such that $\E\co |X-W|\cf$ is arbitrarily small.
\end{proof}

In particular since $\mu^1_\varepsilon = e^{-\frac1\varepsilon U(x)} dx$, the first marginal of the invariant measure $\mu_\varepsilon$, converges to the Dirac mass on $x_2$, and
\[\mathcal W \po e^{-t}\delta_{x_0} + (1-e^{-t})\delta_{x_2}, \delta_{x_2}\pf = e^{-t}|x_2-x_0|,\]
we get
\begin{cor}
In the setting of Theorem \ref{ThmWasser},
\[\mathcal W \po h_{t_\varepsilon},\mu^1_\varepsilon\pf \underset{\varepsilon\rightarrow0}\longrightarrow e^{-t}|x_2-x_0|.\]
\end{cor}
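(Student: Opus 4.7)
The plan is to deduce the corollary from Theorem \ref{ThmWasser} by a simple triangle inequality argument in $\mathcal W$, using the two intermediate measures $m_t = e^{-t}\delta_{x_0}+(1-e^{-t})\delta_{x_2}$ and $\delta_{x_2}$. Concretely, I would write
\[ \bigl|\mathcal W(h_{t_\varepsilon},\mu^1_\varepsilon)-e^{-t}|x_2-x_0|\bigr|\ \leq\ \mathcal W(h_{t_\varepsilon},m_t)+\bigl|\mathcal W(m_t,\mu^1_\varepsilon)-\mathcal W(m_t,\delta_{x_2})\bigr|, \]
since $\mathcal W(m_t,\delta_{x_2})=e^{-t}|x_2-x_0|$ is explicit. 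The first term on the right goes to $0$ by Theorem \ref{ThmWasser}, so everything reduces to checking $\mathcal W(m_t,\mu^1_\varepsilon)\to\mathcal W(m_t,\delta_{x_2})$, which by another application of the triangle inequality is itself controlled by $\mathcal W(\mu^1_\varepsilon,\delta_{x_2})$.

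The substantive part of the argument is therefore to prove $\mathcal W(\mu^1_\varepsilon,\delta_{x_2})\to 0$ as $\varepsilon\to 0$. Using the monotone coupling (or simply the fact that $\mathcal W$ against a Dirac is the first absolute moment), this amounts to showing
\[ \int_{\R} |x-x_2|\,\mu^1_\varepsilon(dx)\ \underset{\varepsilon\to 0}\longrightarrow\ 0. \]
Here I would invoke a standard Laplace-method estimate: under Assumption \ref{HypoUepsi} (and with the strict inequality $U(x_2)<U(x_0)$, so that $x_2$ is the \emph{unique} global minimum of $U$), the mass of $\mu^1_\varepsilon$ concentrates near $x_2$ at exponential speed, while the convexity of $U$ at infinity gives a uniform (in small $\varepsilon$) control on the tail $\int_{|x|>R}|x|\mu^1_\varepsilon(dx)$. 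Splitting the integral at a small neighbourhood of $x_2$ and then at a large $R$, both pieces vanish.

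Once $\mathcal W(\mu^1_\varepsilon,\delta_{x_2})\to 0$ is established, the triangle inequality combined with Theorem \ref{ThmWasser} gives both $\mathcal W(h_{t_\varepsilon},\mu^1_\varepsilon)\leq \mathcal W(h_{t_\varepsilon},m_t)+\mathcal W(m_t,\delta_{x_2})+\mathcal W(\delta_{x_2},\mu^1_\varepsilon)\to e^{-t}|x_2-x_0|$ and the matching lower bound $\mathcal W(h_{t_\varepsilon},\mu^1_\varepsilon)\geq \mathcal W(m_t,\delta_{x_2})-\mathcal W(h_{t_\varepsilon},m_t)-\mathcal W(\delta_{x_2},\mu^1_\varepsilon)\to e^{-t}|x_2-x_0|$, which is the desired limit.

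I expect the main obstacle to be purely a bookkeeping issue: verifying that all first moments appearing in the triangle inequalities are finite and that the Laplace-method tail bound on $\mu^1_\varepsilon$ is uniform in small $\varepsilon$. The hypothesis that $U$ is convex at infinity makes this routine — one obtains a Gaussian-type tail on $\mu^1_\varepsilon$ for every $\varepsilon\leq \varepsilon_0$ — but it is the only place where one has to do more than quote previous results.
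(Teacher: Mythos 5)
Your argument is correct and follows the same route as the paper: Theorem \ref{ThmWasser} gives $\mathcal W(h_{t_\varepsilon},m_t)\to 0$, one checks $\mathcal W(\mu^1_\varepsilon,\delta_{x_2})\to 0$ by Laplace/tail estimates using the convexity of $U$ outside a compact set (which guarantees the uniform integrability needed), and the explicit value $\mathcal W(m_t,\delta_{x_2})=e^{-t}|x_2-x_0|$ combined with the triangle inequality finishes. The paper states this in one line without spelling out the first-moment control on $\mu^1_\varepsilon$; your write-up simply makes that step explicit.
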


\section{NS conditions for the cooling schedule}\label{SectionInhomogene}

We now turn to the study  of the inhomogeneous velocity jump process $\po X_t^{\varepsilon_t},Y_t^{\varepsilon_t}\pf_{t\geq 0}$ on $\R\times \{-1,+1\}$ with generator
\begin{eqnarray*}
 L_t f(x,y) & = & y f'(x,y) + \frac1{\varepsilon_t}\po y U'(x)\pf_+\po f(x,-y) - f(x,y)\pf
\end{eqnarray*}
when Assumption \ref{HypoUepsi} holds, which is supposed in the whole Section \ref{SectionInhomogene}. To lighten the notations, in this section we drop the $\varepsilon_t$ exponent and only call the process $(X_t,Y_t)_{t\geq 0}$.

It can be explicitly constructed in the same manner as the homogeneous process of Section~\ref{SectionDefRTP}, except that the definition \eqref{DefiTavecExpo} of the jump times is now replaced by
\begin{eqnarray*}
 T_{i+1} & = & \inf\left\{t > T_i,\ \int_{T_i}^t \frac{\po Y_{T_i}U'\po X_{T_i} + Y_{T_i}(s-T_i)\pf\pf_+}{\varepsilon_s} ds\geq E_i\right \}.
\end{eqnarray*}
In a finite time interval $[0,T]$, $\varepsilon_t$ is bounded by $\varepsilon_T$ and thus there cannot be an infinite number of jumps in a finite time. In particular the sequence of $(T_i)_{i\geq 0}$ goes to infinity and the process is well defined at all times.

Our method to prove Theorem \ref{TheoremNS} follows the work of Hajek (\cite{Hajek}) on simulated annealing on a discrete space. We consider a smooth Morse potential $U$ on $\R$ with a finite number of local extrema, and convex at infinity. If $x$ is a minimum of depth $E$, denote by $C_x$ the set of all points which are reachable from $x$ at height strictly less than $E$. We call $C_x$ the cusp of $x$ (see the grey area of Figure \ref{FigureProfondeur}). 

More generally we call cusp an interval  $C=(z_l,z_r)$ with $U(z_l) = U(z_r)$ and for all $x\in C$, $U(x)<U(z_l)$. The depth $d$ of $C$ is defined as
\[d = U(z_l) - \min\{U(x),\ x\in C\}.\]
We denote by $B=\left\{z\in C, U(z) = \min\{U(x),\ x\in C\}\right\}$ the bottom of the cusp. Note that a minimum $x$ is always in the bottom of $C_x$, and that conversely if $z$ is in the bottom of $C_x$ then $C_x=C_z$. Obviously the depth of $C_x$ equals the depth of $x$.

\bigskip

We want to bound the time the process spends in a cusp $C$, depending on the depth $d$ of the latter. Nevertheless it is impossible to do so if we only assume the initial position is in $C$: we should put aside the cases where the process starts near the boundary of $C$, with a velocity directed toward the exit.

We denote by $\mathcal N$ the set of local extrema of $U$ in $C$, and
\[u = U(z_l) - \max\{ U(z),\ z\in\mathcal N\}.\]
The maximum over $\mathcal N$ is necessarily attained on a local maximum, except when there is no local maximum between  $z_l$ and $z_r$, in which case $\mathcal N$ is a single point, which is a local minimum, and $u=d$. We define
\[x_l = \inf\{ x\in C, U(z_l)-U(x) \geq u\}\hspace{15pt} x_r = \sup\{ x\in C, U(z_r)-U(x) \geq u\},\]
and
\[A_C = (z_l,x_l)\times\{-1\} \cup (x_r,z_r)\times\{1\} \subset \R\times \{\pm 1\}.\]
The process can only leave $C$ from $A_C$, and can only enter it through $\bar A_C= \po C\times \{\pm 1\}\pf\smallsetminus A_C$. We will note $A_C(t)$ (resp. $\bar A_C(t)$) the event $(X_{t},Y_t)\in A_C$ (resp. $\bar A_C$).

\bigskip

Let $t_0 >0$ and
\begin{eqnarray*}
 \tau_C & = & \inf\{t>t_0,\ X_t\notin C\}.
\end{eqnarray*}
In order to prove Theorem \ref{TheoremNS}, we will establish the two following intermediate results:

\begin{thm}\label{TheoremsortieCoupe}
 Let $C$ be a cusp of depth $d$. There exist $\varepsilon_0,\ c > 0$ such that for all time $t_0>0$, any $z\in\{z_l,z_r\}$ and all cooling schedule $t\mapsto \varepsilon_t$, if $\bar A_C(t_0)$ holds, $\varepsilon_{t_0} \leq \varepsilon_0$ and 
 \[\int_{t_0}^{\infty} \po \varepsilon_s\pf^{-\frac12} e^{-\frac{d}{\varepsilon_s}}ds = \infty,\]
 then 
 \begin{eqnarray}
  \E\co \int_{t_0}^{\tau_C} \po \varepsilon_s\pf^{-\frac12} e^{-\frac{d}{\varepsilon_s}}ds\cf & \leq & c, \label{PartieUnTheoremeSortie}\\
  \mathbb P\po X_{\tau_C} = z \pf & \geq &  \frac1c.\label{PartieDeuxTheoremeSortie}
 \end{eqnarray}
\end{thm}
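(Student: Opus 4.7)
The plan is to adapt to the inhomogeneous setting the renewal decomposition used in the proof of Theorem \ref{ThmCVexpoDimUn}, and to extend it to cusps that may contain several internal local minima. Fix $C=(z_l,z_r)$ of depth $d$, with bottom $B$, and condition on $(X_{t_0},Y_{t_0})\in\bar A_C$. Introduce successive \emph{escape attempts}: let $\theta_0=t_0$ and, for $k\geq 0$, let $\theta_{k+1}\leq \tau_C$ be the first time after $\theta_k$ at which either $X$ leaves $C$, or the process returns to a fixed reference configuration $(x_*,-1)$ with $x_*\in B$ (the analog of $(x_0,-1)$ in the proof of Theorem \ref{ThmCVexpoDimUn}). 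Let $N$ be the index of the first successful attempt, i.e.\ the first $k$ with $\theta_{k+1}=\tau_C$.

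The proof rests on three a priori bounds, holding uniformly in $k$ on the event $\{N\geq k\}$: for each $z\in\{z_l,z_r\}$,
\[ c_1 e^{-d/\varepsilon_{\theta_k}}\leq \mathbb P\po \theta_{k+1}=\tau_C,\ X_{\tau_C}=z\ \big|\ \mathcal F_{\theta_k}\pf,\quad p_k:=\mathbb P\po N=k\ \big|\ \mathcal F_{\theta_k}\pf\leq c_1' e^{-d/\varepsilon_{\theta_k}}, \]
and $\E\co\theta_{k+1}-\theta_k\ \big|\ \mathcal F_{\theta_k}\cf\leq c_2\sqrt{\varepsilon_{\theta_k}}$. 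The success lower bound (for each exit separately) is argued exactly as in Theorem \ref{ThmCVexpoDimUn}: with probability bounded away from $0$, after a short sub-excursion the process starts a fresh climb from $x_*$ with velocity pointing at $z$, and that climb reaches $z$ iff $\varepsilon_\cdot E\geq U(z)-U(x_*)=d$ for an $\mathcal E(1)$ variable $E$; since the attempt is short, $\varepsilon_\cdot$ is essentially constant over it and the probability equals $(1+o(1))e^{-d/\varepsilon_{\theta_k}}$. The success upper bound is an energy argument: reaching $\{z_l,z_r\}$ requires climbing a barrier of height $d$, which has probability $\leq e^{-d/\varepsilon}$ per climb. The duration bound is an inhomogeneous version of Lemma \ref{LemRacineEpsi} applied to the failed climbs, plus an $O(1)$ deterministic-transit contribution.

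Given these three estimates, first \eqref{PartieDeuxTheoremeSortie} follows from
\[ \mathbb P\po X_{\tau_C}=z\pf \geq \sum_{k\geq 0}\E\co c_1 e^{-d/\varepsilon_{\theta_k}}\mathbbm 1_{N\geq k}\cf \geq \frac{c_1}{c_1'}\sum_{k\geq0}\E\co p_k\mathbbm 1_{N\geq k}\cf = \frac{c_1}{c_1'}\mathbb P\po N<\infty\pf = \frac{c_1}{c_1'}, \]
using that $\{N\geq k\}\in\mathcal F_{\theta_k}$ and that $\mathbb P(N<\infty)=1$, the latter being a conditional Borel--Cantelli consequence of the success lower bound, the duration bound, and the hypothesis $\int \varepsilon_s^{-1/2}e^{-d/\varepsilon_s}ds=\infty$. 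Second, \eqref{PartieUnTheoremeSortie} follows by observing that $\varepsilon\mapsto\varepsilon^{-1/2}e^{-d/\varepsilon}$ is increasing on $(0,2d)$ and $\varepsilon_\cdot$ is non-increasing, so for $\varepsilon_{t_0}<2d$,
\[ V_k:=\E\left[\int_{\theta_k}^{\theta_{k+1}}\varepsilon_s^{-1/2}e^{-d/\varepsilon_s}ds\ \Big|\ \mathcal F_{\theta_k}\right]\leq \varepsilon_{\theta_k}^{-1/2}e^{-d/\varepsilon_{\theta_k}}\E\co\theta_{k+1}-\theta_k\ \big|\ \mathcal F_{\theta_k}\cf \leq c_2 e^{-d/\varepsilon_{\theta_k}}\leq \frac{c_2}{c_1}\, p_k, \]
and hence $\E\co\int_{t_0}^{\tau_C}\varepsilon_s^{-1/2}e^{-d/\varepsilon_s}ds\cf=\sum_k\E[V_k]\leq (c_2/c_1)\mathbb P(N<\infty)=c_2/c_1$.

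The main obstacle is the single-climb property underlying the success upper bound and the duration estimate, which is delicate when $C$ contains several internal local minima. A failed attempt may then include long oscillations between sub-wells across internal barriers of height $\leq d-u<d$, so a naive analysis would only give $\E[\theta_{k+1}-\theta_k]=O\po\sqrt{\varepsilon_{\theta_k}}e^{(d-u)/\varepsilon_{\theta_k}}\pf$. The constant $u$ and the sets $A_C,\bar A_C$ in the statement are designed exactly so that an inductive argument on the number of local minima of $C$ goes through: Theorem \ref{TheoremsortieCoupe} itself, applied recursively to each strict sub-cusp of $C$ (of depth $\leq d-u$), controls the effective time spent there, which in turn bounds both the expected number of sub-excursions during one attempt of $C$ and their cumulative duration. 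Setting up this induction with constants uniform across all cusps is the most technical step of the proof.
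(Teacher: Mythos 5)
Your plan recovers the \emph{simple-cusp} case correctly: there the renewal by successive returns to $(x_0,\pm 1)$, the $\sqrt\varepsilon$ bound from Lemma~\ref{LemRacineEpsi}, and the bookkeeping ``weighted integral per cycle $\lesssim$ success probability per cycle'' are essentially what the paper does (it encodes this as the backward recursion on $\Phi(i,j)$). For the general cusp, however, there are two genuine gaps.

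First, your ``success lower bound'' $\mathbb P\po \theta_{k+1}=\tau_C,\ X_{\tau_C}=z\ \big|\ \mathcal F_{\theta_k}\pf \geq c_1 e^{-d/\varepsilon_{\theta_k}}$ is not available with a constant $c_1$ independent of the cooling schedule. The climb to $z$ lasts a time of order $\mathrm{diam}(C)$, and during that window $\varepsilon$ may drop arbitrarily; the correct lower bound has $\varepsilon$ evaluated at the \emph{end} of the climb, and there is no universal constant relating $e^{-d/\varepsilon_{t+\mathrm{diam}(C)}}$ to $e^{-d/\varepsilon_t}$. Theorems~\ref{TheoremsortieCoupe} and \ref{TheoremLowerBound} are stated for \emph{all} schedules with $\varepsilon_{t_0}\leq\varepsilon_0$, so ``since the attempt is short, $\varepsilon_\cdot$ is essentially constant'' cannot be used. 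The paper avoids this by never needing a $c_1 e^{-d/\varepsilon_{\theta_k}}$ lower bound for \eqref{PartieDeuxTheoremeSortie}: it sandwiches $e^{-d/\varepsilon_{\tau_{i+1}}}\leq p_i^l,p_i^r\leq e^{-d/\varepsilon_{\tau_i}}$, exploits the alternating left/right structure to get the \emph{monotonicity} $p_i^l\geq p_{i+1}^r$, and for the general cusp compares the induced chain on $\{z_l,x_l,x_r,z_r\}$ with an explicit birth--death chain. That is a genuinely different (and schedule-robust) mechanism than your $\sum_k \E\co c_1 e^{-d/\varepsilon_{\theta_k}}\mathbbm 1_{N\geq k}\cf$ summation.

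Second, the renewal anchored at returns to the bottom $x_*\in B$ does not close the telescoping for a cusp with internal structure. You correctly note that $\E\co\theta_{k+1}-\theta_k\cf$ is \emph{not} $O(\sqrt{\varepsilon_{\theta_k}})$ in general; with the inductive control of sub-cusps the weighted contribution per cycle comes out of order $e^{-u/\varepsilon_{\theta_k}}$, while the per-cycle exit probability from $x_*$ is of order $e^{-d/\varepsilon_{\theta_k}}$. These exponents do not match, and the comparison $V_k\leq (c_2/c_1)p_k$ breaks down: it would force a spurious factor $e^{(d-u)/\varepsilon}$ into $c$. The paper's cure is precisely the choice of renewal points: it parametrizes attempts by visits to $A_C$ (the strip of depth $u$ near $\partial C$, with velocity pointing outward), so that the per-attempt success probability and the per-attempt weighted contribution (deterministic strip transit plus, via induction, all sub-cusp oscillations between $J_i$ and $K_{i+1}$) are both of order $e^{-u/\varepsilon}$, and the recursion $\Phi(i,j)\leq(1-e^{-u/\varepsilon_{K_{i-1}}})\po\text{const}\cdot e^{-u/\varepsilon_{K_i}}+\Phi(i+1,j)\pf$ telescopes to a constant. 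In short, the sets $A_C,\bar A_C$ are not merely a technical device for the induction: they are what makes the rates in the renewal match. If you keep your decomposition by returns to $x_*$, you must reprove something with those matched exponents anyway, which amounts to redoing the paper's $A_C$-based argument.

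Minor point: writing $p_k=\mathbb P\po N=k\ |\ \mathcal F_{\theta_k}\pf$ and then summing $\E\co p_k\mathbbm 1_{N\geq k}\cf=\mathbb P(N<\infty)$ is fine, but $\mathbb P(N<\infty)=1$ itself requires the lower bound on the success probability; given the first gap above, this too needs the paper's approach (or Theorem~\ref{TheoremLowerBound}) rather than your stated a priori bounds.
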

\begin{thm}\label{TheoremLowerBound}
Let $C$ be a cusp of depth $d$. There exist $\Gamma,\varepsilon_{0}>0$   such that for all times $t_0>0$, $r\geq t_0$ and all cooling schedule $t\mapsto \varepsilon_t$, if $\bar A_C(t_0)$ holds and $\varepsilon_{t_0} \leq \varepsilon_0$ then
 \[\mathbb P \po \tau_C \geq r\pf \geq \exp\po - \Gamma \po \frac{e^{-\frac{d}{\varepsilon_{t_0}}}}{\sqrt{\varepsilon_{t_0}}}+ \int_{t_0}^{r} \frac{e^{-\frac{d}{\varepsilon_{u}}}}{\sqrt{\varepsilon_{u}}} du \pf\pf.\]
\end{thm}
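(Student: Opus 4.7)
The plan is to decompose the dynamics in $C$ into successive climbing attempts from a distinguished bottom minimum and to couple the resulting sequence of potentially successful attempts with an inhomogeneous Poisson process of instantaneous rate bounded by a constant multiple of $e^{-d/\varepsilon_s}/\sqrt{\varepsilon_s}$. Two pathwise ingredients drive the argument. First, by the very definition of the jump times via cumulative integration of $\lambda(x,y)=(yU'(x))_+/\varepsilon_t$, the probability that a deterministic climb starting at time $s$ crosses a net potential difference at least $d$ without any velocity jump equals $\exp(-\int_0^T(yU'(X_{s+u}))_+ \varepsilon_{s+u}^{-1}du)$, which, since $\varepsilon_t$ is non-increasing, is bounded above by $e^{-d/\varepsilon_s}$. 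In particular, once the process sits at a bottom minimum $x^\ast$ of $C$ with $U(x^\ast)=\min_C U$, the probability that a given excursion reaches one of the boundary points $z_l$, $z_r$ without returning to $x^\ast$ is at most $e^{-d/\varepsilon_s}$. Second, Lemma~\ref{LemRacineEpsi} combined with the Morse condition at $x^\ast$ shows that the expected duration of a failed attempt is of order $\sqrt{\varepsilon_s}$, so that the number of attempts per unit time is of order $\varepsilon_s^{-1/2}$.

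Concretely I would fix such an $x^\ast$, introduce the successive hitting times $\rho_0<\rho_1<\cdots$ of $x^\ast$ after $t_0$, and set $p_k=\mathbb P(\text{excursion }k\text{ exits }C\mid\mathcal F_{\rho_k})$. By the strong Markov property and the ingredient above, $p_k\leq e^{-d/\varepsilon_{\rho_k}}$. Introducing i.i.d.\ $\mathrm{Unif}[0,1]$ variables $V_k$ independent of the trajectory, we have the inclusion
\[\{\tau_C\leq r\}\subset\{\exists k:\rho_k<r,\ V_k\leq p_k\}\cup\{\tau_C\leq\rho_0\},\]
and the elementary inequality $\prod_k(1-p_k)\geq\exp(-\sum_k p_k/(1-\max p_k))$ reduces the argument to (a) a bound $\sum_{k:\rho_k<r}p_k\leq \Gamma\int_{t_0}^r e^{-d/\varepsilon_s}/\sqrt{\varepsilon_s}\,ds$ and (b) a bound on the initial exit $\mathbb P(\tau_C\leq\rho_0)$. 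Step (b) is handled by the pathwise exponential formula applied to the initial climb out of $\bar A_C$; together with the $\varepsilon_{t_0}\leq\varepsilon_0$ assumption and the fact that the descent phase from $\bar A_C$ to the bottom takes bounded time, it yields the announced $e^{-d/\varepsilon_{t_0}}/\sqrt{\varepsilon_{t_0}}$ contribution after adjusting the constant $\Gamma$.

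The main obstacle is the conversion (a) from a discrete sum over random cycles into the continuous integral against the slowly-varying schedule $\varepsilon_s$. Since cycle durations $\rho_{k+1}-\rho_k$ are random and not deterministically bounded below, a naive time change fails. The remedy I have in mind is to partition $[t_0,r]$ into mesoscopic windows over which $\varepsilon_s$ is essentially constant, exploit the strong Markov property to approximate cycle lengths on each window by i.i.d.\ copies of the return time of a fixed-temperature chain (whose expectation is of order $\sqrt{\varepsilon_s}$ by Lemma~\ref{LemRacineEpsi}), and apply a Bennett- or Bernstein-type concentration inequality to the number of failed cycles per window. The slow variation of $\varepsilon_t$ ensures that the piecewise-constant approximation errors remain subdominant, and the tail contributions from the concentration steps, once summed, are absorbed into the constant $\Gamma$.
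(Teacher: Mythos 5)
Your skeleton is the right one — express $\mathbb P(\tau_C\geq r)$ as a product over successive climbing attempts, bound the per-attempt success probability by an exponential of $-d/\varepsilon$, show that the number of attempts per unit time is $O(\varepsilon^{-1/2})$, and assemble. That is also what the paper does. But two steps in your proposal, as stated, do not go through and are precisely where the paper invests its technical work.

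\textbf{The per-attempt bound $p_k\leq e^{-d/\varepsilon_{\rho_k}}$ is false for a general cusp.} Your ``first pathwise ingredient'' correctly shows that the probability of climbing from $x^\ast$ to $z_l$ \emph{without any jump} is at most $e^{-d/\varepsilon}$ (the cumulated ascent along any monotone straight-line path from the bottom to the boundary exceeds $d$). But $p_k$ is the probability of exiting $C$ before \emph{returning to $x^\ast$}, and these are different events as soon as $C$ contains an intermediate local minimum. The process can jump while in the well of a sub-minimum $x'$, oscillate there for a while without revisiting $x^\ast$, and eventually exit $C$. No single application of the exponential formula controls this; you need to control the probability that a sequence of partial ascents concatenates into a full crossing. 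The paper handles this via the sub-cusp decomposition into $C_0,\dots,C_{q+1}$ and the birth--death chain comparison (Lemma~\ref{LemSortie}), which yields the one-shot-from-bottom bound $\leq (2c_2)^q e^{-d/\varepsilon}$ — still exponential in $-d/\varepsilon$, but the prefactor requires the induction machinery of Theorem~\ref{TheoremsortieCoupe}. Without this, your decomposition into excursions between visits to $x^\ast$ doesn't yield the claimed inequality.

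\textbf{The mesoscopic-window concentration argument relies on a slow-variation hypothesis you don't have.} You invoke ``the slow variation of $\varepsilon_t$'' to make the piecewise-constant approximation errors subdominant and to treat cycle lengths within a window as approximately i.i.d.\ with mean $\asymp\sqrt{\varepsilon_s}$. But Theorem~\ref{TheoremLowerBound} is claimed \emph{uniformly over all} (non-increasing) cooling schedules — in particular, $\varepsilon_t$ may collapse arbitrarily fast inside any time window. The paper circumvents this with a geometric trick: the temperature-update times $K_j$ are chosen so that each window has duration at least $2a$ (a fixed space scale), and within a window, once $\varepsilon_t$ has dropped below half its value at $t_j$, new attempts are only counted once the process has travelled back to $(x_b\pm a,\mp 1)$; since the motion has unit speed, this forces a deterministic lower bound on the time between such re-entries and hence an upper bound on $N_j$ irrespective of how fast $\varepsilon_t$ decays. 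This, combined with a large-deviation estimate for $\sum\sqrt{E_i}$ to handle the slow-decay regime, produces the uniform bound $\mathbb E(N_j\mid K_j)\leq m/\sqrt{\varepsilon_{t_j}}$. Your Bennett/Bernstein remedy, as described, does not reproduce this and will fail exactly on schedules that drop abruptly. You should replace the ``essentially constant $\varepsilon$'' windows by the paper's two-regime bookkeeping, and feed the window-wise control into the product of failures via a submartingale inequality (rather than by conditioning on a concentration event, which only gives an additive error term that does not sit well in the target multiplicative bound $\exp(-\Gamma\int)$).
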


Note that contrary to Theorem \ref{TheoremNS}, these intermediate results do not require the temperature to go to zero. In particular they hold for constant $\varepsilon$.

\subsection{Theorem \ref{TheoremsortieCoupe}}

The proof is based on an induction over the number of local minima that are contained in the cusp C.  Thus we start with a cusp that contains only one minimum, and so no maximum.

\subsubsection{A simple cusp}\label{SubsectionCuspSimple}

We fix throughout this section a cusp $C$ of depth $d$ with only one local minimum $x_0$ of $U$. Note that in this case  $u=d$, and $x_l=x_r=x_0$. We will prove Theorem \ref{TheoremsortieCoupe} in this situation, which is pretty similar to the settings of Section \ref{SectionTemperatureConstante}. Let $\tau_0 = t_0$ and 
\begin{eqnarray*}
 \tau_{i+1} &= & \min \po \inf\{t>\tau_i,\ X_t=x_0\},\ \tau_C\pf.
\end{eqnarray*}
We first prove:
\begin{lem}\label{LemControleRacine}
 There exists $c_1$ and $\varepsilon_0>0$ such that for all $\varepsilon_{t_0} \leq \varepsilon_0$, for all $i\geq 1$,
 \[\E\co \left. \tau_{i+1}-\tau_i \right| \bar A_C(\tau_i), \tau_i\cf \leq c_1 \sqrt{\varepsilon_{\tau_i}}. \]
\end{lem}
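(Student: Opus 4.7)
The plan is to reduce $\tau_{i+1}-\tau_i$ to (essentially) twice the waiting time of a single excursion of the process away from $x_0$, and then control that waiting time by the same Gaussian Laplace-type estimate used in Lemma \ref{LemRacineEpsi}.

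First I set up the reduction. Since the simple cusp $C$ contains no local extremum of $U$ other than $x_0$, $U'$ does not vanish on $C\setminus\{x_0\}$ and $U$ is strictly increasing on $[x_0,z_r]$ and strictly decreasing on $[z_l,x_0]$. If $\tau_i=\tau_C$ then $\tau_{i+1}=\tau_i$ and there is nothing to prove; otherwise $X_{\tau_i}=x_0$ by definition of $\tau_i$, so $(x_0,Y_{\tau_i})\in\bar A_C$ automatically and $Y_{\tau_i}\in\{\pm1\}$. Treat $Y_{\tau_i}=+1$, the other case being symmetric. Let $T$ be the first jump time after $\tau_i$. If $T-\tau_i\geq z_r-x_0$ the process leaves $C$ before any jump and $\tau_{i+1}-\tau_i\leq z_r-x_0$ is a deterministic constant; otherwise, at time $T$ the process is at $(x_0+(T-\tau_i),-1)$ and travels leftward along a monotone decreasing piece of $U$ on which the jump rate $(yU'(x))_+$ vanishes identically, so no further jump occurs on $(T,2T-\tau_i]$ and $X_{2T-\tau_i}=x_0$, giving $\tau_{i+1}=2T-\tau_i$. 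In all cases
\[\tau_{i+1}-\tau_i \leq 2\min\bigl(T-\tau_i,\, z_r-x_0\bigr).\]

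Second I estimate the waiting time. The PDMP jump-time formula gives
\[\mathbb P(T-\tau_i>t) = \exp\Bigl(-\int_0^t \frac{U'(x_0+u)}{\varepsilon_{\tau_i+u}}\,du\Bigr), \qquad t\in[0,z_r-x_0],\]
and monotonicity of $\varepsilon_\cdot$ (Assumption \ref{HypoUepsi}) yields the pointwise bound
\[\mathbb P(T-\tau_i>t) \leq \exp\Bigl(-(U(x_0+t)-U(x_0))/\varepsilon_{\tau_i}\Bigr).\]
Integrating and splitting at some $\delta>0$ chosen small enough that $U(x_0+t)-U(x_0)\geq \tfrac14 U''(x_0)t^2$ on $[0,\delta]$, the contribution of $[0,\delta]$ is bounded by the Gaussian $\int_0^\infty e^{-U''(x_0)t^2/(4\varepsilon_{\tau_i})}\,dt = O(\sqrt{\varepsilon_{\tau_i}})$, while the contribution of $[\delta,z_r-x_0]$ is at most $(z_r-x_0)\exp(-c_\delta/\varepsilon_{\tau_i}) = o(\sqrt{\varepsilon_{\tau_i}})$ with $c_\delta:=U(x_0+\delta)-U(x_0)>0$. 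The symmetric argument for $Y_{\tau_i}=-1$ gives the same bound on the left half-cusp, yielding constants $c_1$ and $\varepsilon_0$ as claimed.

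The only new element compared to Lemma \ref{LemRacineEpsi} is the time-inhomogeneity of the jump rate, but monotonicity of $\varepsilon_\cdot$ lets us replace $\varepsilon_{\tau_i+u}$ by $\varepsilon_{\tau_i}$ at the cost of loosening the bound only in the favourable direction. The $\sqrt{\varepsilon_{\tau_i}}$ factor is then dictated, exactly as in the homogeneous setting, by the quadratic behaviour of $U$ at its non-degenerate minimum $x_0$.
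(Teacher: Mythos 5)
Your proposal is correct and follows essentially the same route as the paper: reduce to the single jump time after $\tau_i$, use the monotonicity of $\varepsilon_\cdot$ to freeze the temperature at $\varepsilon_{\tau_i}$, and then exploit the non-degeneracy $U''(x_0)>0$ to get a Gaussian-type $O(\sqrt{\varepsilon_{\tau_i}})$ bound. The paper feeds the resulting integral into Lemma~\ref{LemRacineEpsi} via the density of the jump time rather than integrating the tail directly, but this is the same computation up to an integration by parts.
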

\begin{proof}
At time $\tau_i$, $i\geq1$, given $ \bar A_C(\tau_i)$, the position $X_{\tau_i}$ is $x_0$ and the velocity $Y_{\tau_i}$ is either 1 or -1. Writing
 \begin{eqnarray*}
  \E\co \left. \tau_{i+1}-\tau_i \right| \bar A_C(\tau_i), \tau_i\cf & =& \E\co \left. \E\co \left. \tau_{i+1}-\tau_i \right| \bar A_C(\tau_i), \tau_i, Y_{\tau_i}\cf \right| \bar A_C(\tau_i), \tau_i\cf, 
 \end{eqnarray*}
both cases $Y_{\tau_i} = -1$ or $1$ are treated the same way. For instance
 \begin{eqnarray*}
   \E\co \left. \tau_{i+1}-\tau_i \right|  \tau_i, \po X_{\tau_i},Y_{\tau_i}\pf = (x_0,-1)\cf 
  & =& \mathbb P\po \tau_{i+1} = \tau_C |  \tau_i, \po X_{\tau_i},Y_{\tau_i}\pf = (x_0,-1)\pf|x_0-z_l|\\
  & &  +\ \E\co \left. (\tau_{i+1}-\tau_i)\mathbb 1_{ \tau_{i+1} \neq  \tau_C} \right|  \tau_i, \po X_{\tau_i},Y_{\tau_i}\pf = (x_0,-1)  \cf.
 \end{eqnarray*}
 Since the cooling schedule is non-increasing, the probability to reach the boundary and the expectation of the time of jump are bounded by the corresponding quantities at constant temperature $\varepsilon_{\tau_i}$. 
 Indeed, if $E \sim \mathcal E(1)$, conditionally on $ \tau_i$ and to $\po X_{\tau_i},Y_{\tau_i}\pf = (x_0,-1)$,
 \begin{eqnarray*}
\mathbb P\po \tau_{i+1} = \tau_C\pf & = & \mathbb P \po E \geq \int_{0}^{x_0-z_l} \frac{U'(x_0 - s)}{\varepsilon_{{\tau_i}+s}} ds\pf \\
  & \leq & \mathbb P \po E \geq  \frac{1}{\varepsilon_{\tau_i}}\int_{0}^{x_0-z_l} U'(x_0 - s) ds\pf \\
  & = & e^{-\frac{d}{\varepsilon_{\tau_i}}}
 \end{eqnarray*}
and
\begin{eqnarray*}
 \E\co (\tau_{i+1}-\tau_i)\mathbb 1_{ \tau_{i+1} \neq  \tau_C}  \cf & = & 2\int_{0}^{x_0-z_l} \mathbb P\po \frac{\tau_{i+1}-\tau_i}2 \geq u\pf du\\ 
  & = &  2\int_{0}^{x_0-z_l} \mathbb P\po E \geq \int_{0}^{u} \frac{U'(x_0 - s)}{\varepsilon_{{\tau_i}+s}} ds \pf du\\ 
 & \leq &  2\int_{0}^{x_0-z_l} \mathbb P\po E \geq  \frac{1}{\varepsilon_{\tau_i}}\int_{0}^{u} U'(x_0 - s) ds \pf du\\ 
  & = & 2 \int_{0}^{x_0-z_l} \frac{s U'(x_0 - s)}{\varepsilon_{\tau_i}} e^{-\frac{U(x_0-s)-U(x_0)}{\varepsilon_{\tau_i}}}ds.
 \end{eqnarray*}
 Thanks to Lemma \ref{LemRacineEpsi} there exist $\varepsilon_0$ and $c_1$ such that for all $\varepsilon \leq \varepsilon_0$, 
 \[e^{-\frac{d}{\varepsilon}}|x_0-z_l| + 2\int_{0}^{x_0-z_l} \frac{s U'(x_0 - s)}{\varepsilon} e^{-\frac{U(x_0-s)-U(x_0)}{\varepsilon}}ds \leq c_1 \sqrt{\varepsilon},\]
 which concludes the proof since $\varepsilon_{\tau_{i}} \leq \varepsilon_{t_0} \leq \varepsilon_0 $.
\end{proof}

\begin{proof}[Proof of Part \eqref{PartieUnTheoremeSortie} of Theorem \ref{TheoremsortieCoupe} for a simple cusp]
By choosing $\varepsilon_0$ small enough, for all cooling schedule $\varepsilon_t \leq \varepsilon_0$, the map
\[s \mapsto \po \varepsilon_s\pf^{-\frac12} e^{-\frac{d}{\varepsilon_s}}\]
is non-increasing, so that
\begin{eqnarray*}
 \E\co \left. \int_{\tau_i}^{\tau_{i+1}} \po \varepsilon_s\pf^{-\frac12} e^{-\frac{d}{\varepsilon_s}}ds\right| \tau_i,\ \bar A_C(\tau_i)\cf & \leq & \po \varepsilon_{\tau_i}\pf^{-\frac12} e^{-\frac{d}{\varepsilon_{\tau_i}}}\E\co \left. \tau_{i+1}-\tau_i \right|  \tau_i,\ \bar A_C(\tau_i)\cf \\
 & \leq & c_1 e^{-\frac{d}{\varepsilon_{\tau_i}}}.
\end{eqnarray*}
We define
\[\Phi(i,j) = \E\co \left.\int_{\tau_i}^{\tau_j} \po \varepsilon_s\pf^{-\frac12} e^{-\frac{d}{\varepsilon_s}}\mathbb 1_{s<\tau_C}ds\right| \tau_{i-1},\ \bar A_C(\tau_{i-1})\cf.\]
Obviously $\Phi(j,j)=0$; suppose  $\Phi(i+1,j) \leq c_1$ has already been proved for some $i< j$. Under the event $\bar A_C(\tau_{i-1})$
, there are two possibilities: either the process escapes the cusp $C$ between the times $\tau_{i-1}$ and $\tau_i$, in which case $\tau_i = \tau_C = \tau_{j}$ and the integral appearing in the definition of $\Phi(i,j)$ vanishes; or the attempt to exit $C$ between  $\tau_{i-1}$ and $\tau_i$ fails and the process returns  to $X_{\tau_i}=x_0$. To sum up,
\begin{eqnarray*}
 \Phi(i,j) &  =& \mathbb P\po \tau_i < \tau_C\ |\ \tau_{i-1},\ \bar A_C(\tau_{i-1})\pf \E\co \left.\int_{\tau_i}^{\tau_j} \po \varepsilon_s\pf^{-\frac12} e^{-\frac{d}{\varepsilon_s}}\mathbb 1_{s<\tau_C}ds\right| \tau_{i-1},\ \bar A_C(\tau_{i})\cf\\
 & \leq &\po1-e^{-\frac{d}{\varepsilon_{\tau_{i-1}}}}\pf \E\co \E\co \left.\left.\int_{\tau_i}^{\tau_{i+1}} \po \varepsilon_s\pf^{-\frac12} e^{-\frac{d}{\varepsilon_s}}\mathbb 1_{s<\tau_C}ds\right| \tau_{i},\ \bar A_C(\tau_{i})\cf + \Phi(i+1,j)\right| \ \tau_{i-1}\cf\\
 & \leq &  \po1-e^{-\frac{d}{\varepsilon_{\tau_{i-1}}}}\pf\E \co\left.  c_1 e^{-\frac{d}{\varepsilon_{\tau_i}}}+ c_1\right| \ \tau_{i-1} \cf\\
 & \leq & \po1-e^{-\frac{d}{\varepsilon_{\tau_{i-1}}}}\pf\po  c_1 e^{-\frac{d}{\varepsilon_{\tau_{i-1}}}}+c_1\pf \leq c_1.
\end{eqnarray*}
 This proves by induction that $\Phi(1,j) \leq c_1$ for all $j\geq 1$. As was already pointed out there can only be a finite number of jumps in a finite time, so that the sequence $(\tau_i)_{i\geq 0}$ almost surely converges to $\tau_C$ (which, at this point, may be infinite). The monotone convergence theorem yields
 \[\E\co \left. \int_{\tau_1}^{\tau_C} \po \varepsilon_s\pf^{-\frac12} e^{-\frac{d}{\varepsilon_s}}ds \right| \bar A_C(t_0)\cf  \leq c_1.\]
 On the other hand, as soon as $\varepsilon_t\leq \varepsilon_0$,
\[\E\co \left. \int_{t_0}^{\tau_1} \po \varepsilon_s\pf^{-\frac12} e^{-\frac{d}{\varepsilon_s}}ds \right| \bar A_C(t_0)\cf  \leq \po \varepsilon_0\pf^{-\frac12} e^{-\frac{d}{\varepsilon_0}} \max\po |x_0-z_l|,\ |z_r-x_0|\pf.\]
Bringing all the pieces together, there exists a constant $c>0$ such that 
\[\E\co \left. \int_{t_0}^{\tau_C} \po \varepsilon_s\pf^{-\frac12} e^{-\frac{d}{\varepsilon_s}}ds \right| \bar A_C(t_0)\cf  \leq c.\]
\end{proof}
Note that if the cooling schedule is such that $\int_{t_0}^{\infty} \po \varepsilon_s\pf^{-\frac12} e^{-\frac{d}{\varepsilon_s}}ds = \infty$, this first result implies that $\tau_C$ is almost surely finite, in which case $(X_{\tau_C},Y_{\tau_C})$ is well defined, and the second half of Theorem \ref{TheoremsortieCoupe} makes sense.

\begin{proof}[Proof of Part \eqref{PartieDeuxTheoremeSortie} of Theorem \ref{TheoremsortieCoupe} for a simple cusp]
 We note $p_i^l$ (resp. $p_i^r$) the probability to exit $C$ in one shot, meaning before reaching again $x_0$, starting at $(X_{\tau_i},Y_{\tau_i}) = (x_0,-1)$ (resp. $(x_0,1)$), with $X_{\tau_C} = z_l$ (resp. $z_r$). Namely, if $E\sim\mathcal E(1)$,
 \begin{eqnarray*}
 p_i^l & =& \mathbb P\po \int_0^{x_0-z_l} \frac{1}{\varepsilon_{\tau_i+s}}U'(x_0- s)ds \leq E\pf \\
 p_i^r & =& \mathbb P\po \int_0^{z_r-x_0} \frac{1}{\varepsilon_{\tau_i+s}}U'(x_0 + s)ds \leq E\pf.
 \end{eqnarray*}
 Since the cooling schedule is not-increasing,
\[e^{-\frac{d}{\varepsilon_{\tau_{i+1}}}} \leq p_i^l, p_i^r \leq e^{-\frac{d}{\varepsilon_{\tau_i}}}.\]
In particular $p_i^l \geq p_{i+1}^r$, which means that between two consecutive attempts to exit the cusp, the second one is always less likely to succeed than the first one, 
and thus if $(X_{\tau_i},Y_{\tau_i}) = (x_0,-1)$ then the probability that the exit point will be $z_l$ is greater than $\frac12$. On the other hand,
\begin{eqnarray*}
\mathbb P\po X_{\tau_C} = z_l \ |\ (X_{\tau_1},Y_{\tau_1}) = (x_0,1)\pf  & =& (1-p_1^r)\mathbb P\po X_{\tau_C} = z_l \ |\ (X_{\tau_2},Y_{\tau_2}) = (x_0,-1)\pf\\
& \geq & \frac12 \po 1 - e^{-\frac{d}{\varepsilon_{t_0}}}\pf.
\end{eqnarray*}
Under the event $\bar A_C(t_0)$, necessarily $\tau_1 < \tau_C$, in other words $X_{\tau_1} = x_0$ and $Y_{\tau_1}$ is either 1 or -1. Since the previous arguments cover both cases, as soon as $\varepsilon_{t_0} \leq \varepsilon_0$,
\begin{eqnarray*}
\mathbb P\po X_{\tau_C} = z_l \ |\ \bar A_C(t_0)\pf  & \geq & \frac12 \po 1 - e^{-\frac{d}{\varepsilon_{0}}}\pf.
\end{eqnarray*}
The case of $z_r$ is symmetric.
\end{proof}

\subsubsection{Induction}

In this section we consider a general cusp of depth $d$ and we prove Theorem \ref{TheoremsortieCoupe} under the induction assumption that it holds for any cusp with strictly fewer local maxima than $C$. There is a finite number of maxima $z$ in $C$ for which $u=U(z_l)-U(z)$, and each connected component of $(x_l,x_r)\smallsetminus\{z,\ u=U(z_l)-U(z)\}$ is a cusp of depth at most $g = d-u$ (cf Fig. \ref{FigureInduction}). We call $(C_i)_{i=1..n}$ these cusps of depth $g $. We can consider $\varepsilon_0$ and $M$ such that for all $\varepsilon_t\leq \varepsilon_0$, for all $i=1,\dots,n$, for all $z_i$ which is an end of $C_i$ and for all $t_0$,
\begin{eqnarray*}
\mathbb P\po \left. X_{\tau_{C_i}} = z_i\right| \  \bar A_{C_i}(t_0)\pf  & \geq&  \frac1M\\
 \text{ and }\hspace{15pt}\E\co\left. \int_{t_0}^{\tau_{C_i}} \po \varepsilon_s\pf^{-\frac12} e^{-\frac{g}{\varepsilon_s}}ds\right| \bar A_{C_i}(t_0) \cf  & \leq & M\\
 \Rightarrow\hspace{15pt}\E\co\left. \int_{t_0}^{\tau_{C_i}} \po \varepsilon_s\pf^{-\frac12} e^{-\frac{d}{\varepsilon_s}}ds\right| \bar A_{C_i}(t_0) \cf  & \leq & M e^{-\frac{u}{\varepsilon_{t_0}}}.
\end{eqnarray*}

\begin{figure}
 \centering
 \includegraphics[scale=0.5]{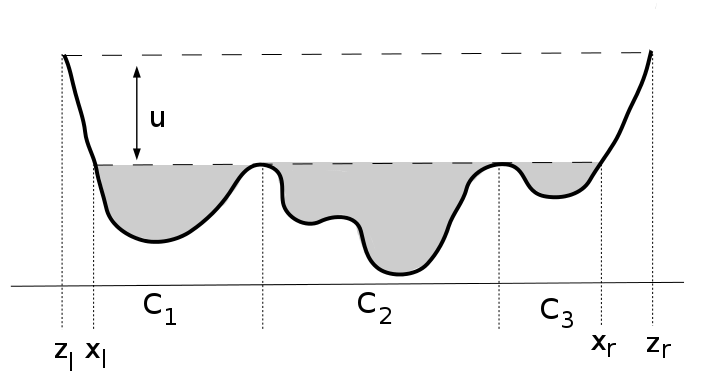}
\caption{The cusp $C$ is divided in several smaller cusps.}\label{FigureInduction}
\end{figure}

\begin{proof}[Proof of Theorem \ref{TheoremsortieCoupe}, part \eqref{PartieUnTheoremeSortie}]
Set $J_0 = t_0$ and define $K_i$, $J_i$ for $i\geq 1$ by
\begin{eqnarray*}
 K_i &  =& \min\po \inf\{t>J_{i-1},\ A_C(t) \},\tau_C\pf\\
 J_i & =& \min\po \inf\{t>K_{i},\ \bar A_C(t) \},\tau_C\pf.
\end{eqnarray*}
At time $K_i$, the process is ready to escape: it is located near the boundary of $C$, and its velocity is directed toward the exit. At time $J_i$, if it failed to leave the cusp, it goes back to the $C_j$'s (see Fig. \ref{FigureInduction}) and, as previously,
\begin{eqnarray*}
 \Phi(i,j) & := & \E \co\left. \int_{K_i}^{K_j}  \po \varepsilon_s\pf^{-\frac12} e^{-\frac{d}{\varepsilon_s}}ds\right| K_{i-1},\ \tau_C > K_{i-1} \cf\\ 
 &  =& \mathbb P\po\left.\tau_C > K_{i}\right| K_{i-1},\ \tau_C > K_{i-1}\pf \E \co\left. \int_{K_i}^{K_j}  \po \varepsilon_s\pf^{-\frac12} e^{-\frac{d}{\varepsilon_s}}ds\right| K_{i-1},\ \tau_C > K_{i} \cf\\
 & \leq &\po1-e^{-\frac{u}{\varepsilon_{K_{i-1}}}}\pf\E \co\left. \int_{K_i}^{J_i}+\int_{J_i}^{K_{i+1}} +\int_{K_{i+1}}^{K_j}   \po \varepsilon_s\pf^{-\frac12} e^{-\frac{d}{\varepsilon_s}}ds\right| K_{i-1},\ \tau_C > K_{i} \cf.
\end{eqnarray*}
We treat the three parts of the integral one after another. From Lemma \ref{LemControleRacine}, there exists a constant $c_1$ (depending on $C$ but not on $(\varepsilon_t)_{t\geq t_0}$) such that
\[\E \co\left. \int_{K_i}^{J_i}   \po \varepsilon_s\pf^{-\frac12} e^{-\frac{d}{\varepsilon_s}}ds\right| K_i,\ \tau_C > K_{i} \cf \leq c_1 e^{-\frac{u}{\varepsilon_{K_i}}}.\]

As far as the time interval $[J_i,K_{i+1}]$ is concerned, we distinguish the following cases: if $J_i = \tau_C$, the process has escaped from $C$, hence $K_{i+1}=\tau_C$ and the integral vanishes. Otherwise the process goes back down to a cusp $C_i$, and then for a while pass from a $C_j$ to another, until it reaches $A_C$ again. The contribution of the time passed in each $C_i$ to the integral can be controlled thanks to the induction assumption. It remains to make sure the number of transitions between the $C_j$'s before existing is not too large. When the process exits $C_i$, the probability it does so through a given end is bounded below thanks to the induction assumption. Thus the expectation of the number $N$ of jumps from a $C_i$ to another $C_j$ before the process reaches $A_C$ is bounded by a constant $D$ which does not depend on the cooling schedule. Hence, denoting by  $\tau_{C_i}^*$ an entry time of the process in $C_i$,
\begin{eqnarray*}
 & & \E \co\left. \int_{J_i}^{K_{i+1}}   \po \varepsilon_s\pf^{-\frac12} e^{-\frac{d}{\varepsilon_s}}ds\right| K_i,\ \tau_C > K_{i}\cf \\
 & \leq & \E \co\left. N \underset{i}\max \E \co\left. \int_{\tau_{C_i}^*}^{\tau_{C_i}}   \po \varepsilon_s\pf^{-\frac12} e^{-\frac{d}{\varepsilon_s}}ds\right| \tau_{C_i}^*>K_i,\ \bar A_{C_i}\po \tau_{C_i}^*\pf \cf \right| K_i,\ \tau_C > J_{i}\cf\\
 & \leq & DM e^{-\frac{u}{\varepsilon_{K_i}}}.
\end{eqnarray*}
Since $\Phi(j,j) = 0$, as an induction assumption we can suppose
\[\Phi(i+1,j) \leq DM+c_1,\]
and thus we get
\begin{eqnarray*}
 \Phi(i,j) & \leq & \po1-e^{-\frac{u}{\varepsilon_{K_{i-1}}}}\pf \E \co \left. e^{-\frac{u}{\varepsilon_{K_i}}}(DM+c_1) + \Phi(i+1,j) \right| K_{i-1},\ \tau_C > K_{i}\cf.\\
 & \leq & \po1-e^{-\frac{u}{\varepsilon_{K_{i-1}}}}\pf \po e^{-\frac{u}{\varepsilon_{K_{i-1}}}} +1\pf \po DM+c_1\pf\\
 & \leq & DM+c_1.
\end{eqnarray*}
The monotone convergence Theorem yields
\[ \E \co\left. \int_{K_1}^{\tau_C} \po \varepsilon_s\pf^{-\frac12} e^{-\frac{d}{\varepsilon_s}}ds\right| \ \bar A_C(t_0) \cf \leq DM+c_1.\]
On the other hand in the event $\bar A_C(t_0)$, either the process start in a $C_i$, or it reaches a $C_i$ in a time bounded by $\max(|x_l-z_l|,\ |z_r-x_r|)$. In both cases with the previous argument we used for $[J_i,K_{i+1}]$,
\begin{eqnarray*}
  \E \co\left. \int_{t_0}^{K_{1}}   \po \varepsilon_s\pf^{-\frac12} e^{-\frac{d}{\varepsilon_s}}ds\right| \bar A_C(t_0)\cf & \leq & DM +  \po \varepsilon_{t_0}\pf^{-\frac12} e^{-\frac{d}{\varepsilon_{t_0}}} \max(|x_l-z_l|,\ |z_r-x_r|),
\end{eqnarray*}
and ultimately part \eqref{PartieUnTheoremeSortie} of Theorem \ref{TheoremsortieCoupe} is proved with
\[c = 2DM+c_1 + \po \varepsilon_{t_0}\pf^{-\frac12} e^{-\frac{d}{\varepsilon_{t_0}}} \max(|x_l-z_l|,\ |z_r-x_r|). \]
\end{proof}

The same remark as in Section \ref{SubsectionCuspSimple} holds: when $\int_{t_0}^{\infty} \po \varepsilon_s\pf^{-\frac12} e^{-\frac{d}{\varepsilon_s}}ds = \infty$, the first part of Theorem \ref{TheoremsortieCoupe} implies that $\tau_C$ is almost surely finite, and $X_{\tau_C}$ well-defined.

\begin{proof}[Proof of Theorem \ref{TheoremsortieCoupe}, part \eqref{PartieDeuxTheoremeSortie}]
 The situation is very similar to the simple cusp one. Take $M'_0 = t_0$ and
 \begin{eqnarray*}
 M_{i} &= & \min \po \inf\{t>M'_i,\ A_C(t)\},\ \tau_C\pf\\
 M'_{i+1} &= & \min \po \inf\{t>M_i,\ \bar A_C(t)\},\ \tau_C\pf.
\end{eqnarray*}
Given the sequence $(M_i)_{i\geq 1}$, $(X_{M_i})_{i\geq 1}$ is an inhomogeneous Markov chain in $\{z_l,x_l,x_r,z_r\}$. 
Let
\begin{eqnarray*}
 p_i^l & =& \mathbb P\po \int_0^{x_l-z_l} \frac{1}{\varepsilon_{M_i+s}}U'(x_0- s)ds \leq E\pf ,\\
 p_i^r & =& \mathbb P\po \int_0^{z_r-x_r} \frac{1}{\varepsilon_{M_i+s}}U'(x_0 + s)ds \leq E\pf.
 \end{eqnarray*}
The induction assumption on the $C_j$'s implies that the transitions of this chain from $x_r$ to $x_r$ and vice-versa are bounded below by a constant $h>0$ which does not depend on the cooling schedule. Hence $(X_{M_i})_{i\geq 1}$ is more likely to reach $z_l$ before $z_r$ than the chain $(\tilde X_{M_i})_{i\geq 1}$ with transition
\begin{displaymath}
 \begin{array}{lclclcl}
   \mathbb P(x_l \rightarrow x_r ) & = & 1 - p_i^l & \hspace{25pt} & \mathbb P(x_r \rightarrow x_l ) & = & h \\
   \mathbb P(x_l \rightarrow z_l ) & = & p_i^l & \hspace{25pt} & \mathbb P(x_r \rightarrow z_r ) & = & p_i^r\\
   \mathbb P(x_l \rightarrow x_l ) & = & 0 & \hspace{25pt} & \mathbb P(x_r \rightarrow x_r ) & = & 1 - h - p_i^r.
 \end{array}
\end{displaymath}
Similarly to the situation in Section \ref{SubsectionCuspSimple}, $p_i^l \geq p_j^r$ if $j>i$, and $p_1^r \leq e^{-\frac{u}{\varepsilon_0}}$. The probability that $\tilde X$, starting from $\tilde X_{M_{i+1}}=x_r$, hits $z_r$ before $x_l$ is 
\[\sum_{j\geq 1} p_{i+j}^r \prod_{1\leq k < j} (1-h-p_k^r) \leq \sum_{j\geq 1} p_{i+1}^r  (1-h)^{j-1} \leq \frac{p_i^l}{1-h}. \]
In particular this is less than $\frac{e^{-\frac{u}{\varepsilon_0}}}{1-h}$, so that the probability that there exist $k_0\geq 0$ such that $\tilde X_{M_{k_0}} = x_l$ is bounded below. Hence to bound the probability to reach $z_l$ before $z_r$ we can assume the initial point is $x_l$. Let
\[k_{i+1} = \inf\{n > k_i,\ \tilde X_{M_n} \neq x_r\}.\]
Given the times $\po M_{k_i}\pf$, the chain $\tilde X$ is more likely to reach $z_l$ before $z_r$ than the chain $X'$ with transition 
\[\mathbb P(x_l \rightarrow z_l )  = p_{k_i}^l  \hspace{25pt} \mathbb P(x_l \rightarrow z_r )  =  \frac{p_{k_i}^l}{1-h}  \hspace{25pt} \mathbb P(x_l \rightarrow x_l )  = 1 - p_{k_i}^l - \frac{p_{k_i}^l}{1-h}.\]
Finally, $X'$ goes to $z_l$ rather than $z_r$ with a probability $\frac{1}{1+\frac{1}{1-h}}$, which does not depend on the cooling schedule, which concludes.
\end{proof}

\subsection{Theorem \ref{TheoremLowerBound}}

We start by some preliminary lemmas. Let $C$ be a cusp of depth $d$, recall that its bottom is $B = \{x\in C, U(z)\geq U(x) \ \forall z\in C\}$ and let $x_b = \min B$. We will give an upper bound of the probability that the process, starting at time $t_0$ at point $(x_b,-1)$, reaches $z_l$ before $(x_b,1)$. Obviously, if there were no local maximum between $x_b$ and $z_l$, this probability would be bounded by $e^{-\frac{d}{\varepsilon_{t_0}}}$. To prove a similar bound in more general cases, we decompose $C$ in the following way: let
\[J = \{ x\in [z_l,x_b],\text{ s.t. } \forall z\in (x,x_b],\ U(z)<U(x) \}.\]
On the set $J$, $U$ is non-increasing, and the connected component of the closure of $[z_l,x_b] \smallsetminus J$ are  cusps whose right end are local maxima. We call $u_1 > u_2 > \dots > u_{q}$ this local maxima, $C_1,\dots,C_{q-1}$ the corresponding cusps, $u_0 = x_b$, $C_0 = \{x_b\}$, $u_{q+1}=z_l$ and $C_{q+1} = \{ z_l\}$. 
The point of taking the connected component of \emph{the closure} of $[z_l,x_b] \smallsetminus J$ was to ensure $U(u_i) > U(u_j)$ if $i>j$. We say $U(u_i)$ is the energy level of $C_i$, denoted by $E_i$, and we note
\[\delta_i = E_{i+1}-E_i.\]
Note that $\sum_{i=0}^q \delta_i = E_{q+1} - E_0 = d$. The situation is represented in Figure \ref{FigureTheoremSix}.
Let $t_0>0$, $1\leq i < q$ and suppose $\bar A_{C_i}(t_0)$ holds. Let
\[s = \inf \{ t > t_0 \text{ s.t. }\exists j \neq i,\ X_t \in C_j\}.\]
\begin{lem}
 There exists $c_2 >0$ which depends on $C$ but not on the cooling schedule so that if $\bar A_{C_i}(t_0)$ holds,
 \begin{eqnarray}\label{EqEi}
   \mathbb P\po U(X_{s})  = E_{i+1}\pf & \leq & c_2 e^{-\frac{\delta_i}{\varepsilon_{t_0}}}.
 \end{eqnarray}
\end{lem}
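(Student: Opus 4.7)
The plan is to reduce the event $\{U(X_s)=E_{i+1}\}$ to a specific geometric event --- a successful ``climb'' of height $\delta_i$ out of $C_i$ --- and then to compute its probability by a competing-events argument at the level of boundary crossings.

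\textbf{Geometric reduction.} The inequalities $\delta_j>0$ with $\sum\delta_j=d$ force $E_0<E_1<\cdots<E_{q+1}$, so the sub-cusps $C_0,\ldots,C_{q+1}$ are linearly ordered in position along $[z_l,x_b]$ as a descending staircase of peaks; the only neighbours of $C_i$ are therefore $C_{i-1}$, reached by descending past $u_i$, and $C_{i+1}$, reached by climbing from $a_i^-$ up to $u_{i+1}$. On the descending slope past $u_i$ one has $yU'(x)\le 0$ along the motion, so no jumps occur (exactly as for a simple cusp in Section \ref{SubsectionCuspSimple}) and the process reaches $C_{i-1}$ deterministically at height $E_{i-1}\neq E_{i+1}$. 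Consequently, $\{U(X_s)=E_{i+1}\}$ is exactly the event that the process eventually crosses $a_i^-$ outward and, on the ensuing climbing segment, reaches $u_{i+1}$ without any intermediate velocity jump.

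\textbf{Per-attempt bound and competing-events argument.} Calling \emph{attempt} a visit of the process to $a_i^-$ with outward velocity at some time $t^\star\ge t_0$, the substitution $r=U(x)-E_i$ on the climbing slope and the Poisson construction \eqref{DefiTavecExpo} show that each attempt succeeds with conditional probability
\[
\exp\Bigl(-\int_0^{\delta_i}\tfrac{dr}{\varepsilon_{t^\star+t(r)}}\Bigr)\;\le\;\exp\bigl(-\delta_i/\varepsilon_{t_0}\bigr)
\]
thanks to the monotonicity of the cooling schedule. By the strong Markov property applied at boundary hits, each time the process is at a boundary of $C_i$ with outward velocity it is either at $u_i$ (in which case the game ends with $U(X_s)=E_{i-1}$) or at $a_i^-$ (in which case it tries to climb, succeeds with conditional probability $\le e^{-\delta_i/\varepsilon_{t_0}}$, and on failure returns to a state in $\bar A_{C_i}$ from which the same reasoning applies). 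Writing $p_R$ for a uniform lower bound on the probability that the next outward boundary hit from any state in $\bar A_{C_i}$ is at $u_i$ rather than at $a_i^-$, a direct geometric-series computation then gives
\[
\mathbb P(U(X_s)=E_{i+1})\;\le\;\frac{e^{-\delta_i/\varepsilon_{t_0}}}{p_R+(1-p_R)e^{-\delta_i/\varepsilon_{t_0}}}\;\le\;\frac{1}{p_R}\,e^{-\delta_i/\varepsilon_{t_0}},
\]
and the lemma follows with $c_2=1/p_R$.

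\textbf{Main obstacle.} The only non-routine step is establishing such a uniform lower bound $p_R>0$, depending only on $U|_{C_i}$ and not on $\varepsilon$ nor on the cooling schedule. Heuristically this holds because both endpoints $u_i$ and $a_i^-$ of $C_i$ lie at the \emph{same} height $E_i$ above $\min_{C_i}U$: the dominant exponential factors weighing the probabilities of reaching either endpoint in a single excursion from the bottom coincide, leaving only a ratio controlled by geometric data of $U|_{C_i}$. Concretely, one may repeat verbatim, inside $C_i$, the simple-cusp analysis in the proof of Part~\eqref{PartieDeuxTheoremeSortie} of Theorem~\ref{TheoremsortieCoupe}: the integrated-rate comparison that produced there the bound $\mathbb P(X_{\tau_C}=z_l\mid\bar A_C(t_0))\ge\tfrac12(1-e^{-d/\varepsilon_0})$ yields the required uniform lower bound on $p_R$ here.
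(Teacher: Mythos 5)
Your proposal follows essentially the same route as the paper's own proof: you decompose the event into a sequence of climbing attempts starting at the left exit of $C_i$, bound each attempt's success probability by $e^{-\delta_i/\varepsilon_{t_0}}$ via the Poisson construction and the monotonicity of the cooling schedule, and close the argument with a geometric-series bound governed by a uniform, schedule-free lower bound on the exit-direction probability coming from part \eqref{PartieDeuxTheoremeSortie} of Theorem \ref{TheoremsortieCoupe} — exactly the paper's reasoning, giving $c_2 = 1/p_R$. The only loose phrasing is the suggestion to ``repeat verbatim the simple-cusp analysis'' inside $C_i$, since $C_i$ need not be a simple cusp; but you also correctly invoke the general statement of Theorem \ref{TheoremsortieCoupe}, which is what the paper actually relies on.
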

\begin{proof}
 Since there is a finite number of cusps $C_i$, it is enough to show such a constant exists for any one of them. Starting from $A_{C_i}$, if the process exits $C_i$ to the right, it will deterministically falls down to $C_{i-1}$, which means $U(X_s) = E_{i-1}$. If the process exits to the left, it reaches $C_{i+1}$ in one shot with probability less than $e^{-\frac{\delta_i}{\varepsilon_{t_0}}}$, else goes back down to $\bar A_{C_i}$. Since the probability the process escapes to the left, starting from $\bar A_{C_i}$, is bounded by a constant $h>0$ thanks to Theorem \ref{TheoremsortieCoupe},
 \[\mathbb P\po U(X_{s})= E_{i+1}\pf \leq \sum_{k\geq 0}(1-h)^k e^{-\frac{\delta_i}{\varepsilon_{t_0}}} = \frac{e^{-\frac{\delta_i}{\varepsilon_{t_0}}}}{1-h}.\]
\end{proof}
\textbf{Remark:} \emph{The constant $c_2$ is defined from $h$, which depends only on the $C_i$'s. Thus If $(D_l)_{l\geq0}$ is a family of nested cusps such that for all $k\geq 0$ all the minima of $D_k$ belongs to $\underset{l\geq0}\bigcap D_l$, the constant $c_2$ may be defined uniformly on $k$ so that  \eqref{EqEi} holds for all $D_k$'s. Since $q$ (the number of small cusps) is also the same for all $D_k$'s, this remark will extend to the next result}.

This lemma implies the process, starting from $\bar A_{C_i}$, is less likely to hit $(z_l,-1)$ before $(x_0,1)$ than the birth-death process $(W_n)_{n\geq0}$ on $[0,q+1]$ with transition probabilities
\[\mathbb P(j\rightarrow j+1) = c_2 e^{-\frac{\delta_j}{\varepsilon_{t_0}}}, \hspace{30pt}\mathbb P(j\rightarrow j-1) = \po1- c_2 e^{-\frac{\delta_j}{\varepsilon_{t_0}}}\pf,\hspace{20pt}j=\in\cco 1,q\ccf,\]
is to hit $q+1$ before $0$, starting at $i$.

\begin{figure}
 \centering
 \includegraphics[scale=0.4]{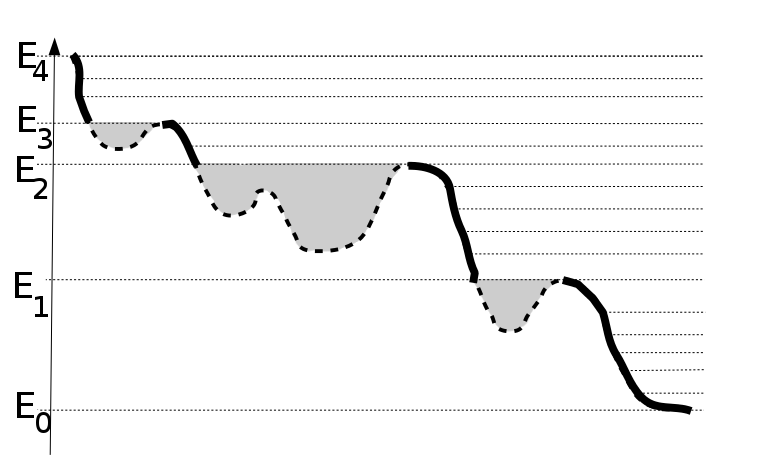}
\caption{If the light comes from the right, $J$, in bold line, is the sunny part, and the $C_i$'s are in the shadow.}\label{FigureTheoremSix}
\end{figure}

\begin{lem}\label{LemSortie}
For $\varepsilon_{t_0}$ small enough,
 \[\mathbb P\po W \text{ hits }q+1\text{ before } 0\ |\ W_0 = 1\pf \leq \po 2c_2\pf^q e^{-\frac{E_{q+1}-E_1}{\varepsilon_{t_0}}}.\]
 As a consequence,
 \[\mathbb P\po X \text{ hits }z_l\text{ before } x_b\ |\ \bar A_{C_1}(t_0)\pf \leq \po 2c_2\pf^q e^{-\frac{E_{q+1}-E_1}{\varepsilon_{t_0}}}\]
 and
 \[\mathbb P\po (X,Y) \text{ hits }(z_l,-1)\text{ before } (x_b,1)\ |\ (X_{t_0},Y_{t_0}) = (x_b,-1)\pf \leq \po 2c_2\pf^q e^{-\frac{d}{\varepsilon_{t_0}}}.\]
\end{lem}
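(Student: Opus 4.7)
The plan is to reduce each assertion to the preceding one, the gambler's ruin computation being the only real work. First I would compute $\mathbb P_1(W \text{ hits } q+1 \text{ before } 0)$ using the standard birth-death formula: setting $p_j = c_2 e^{-\delta_j/\varepsilon_{t_0}}$, $q_j = 1-p_j$, $\gamma_0 = 1$ and $\gamma_k = \prod_{j=1}^k q_j/p_j$, this hitting probability equals $\gamma_0/\sum_{k=0}^q \gamma_k \leq 1/\gamma_q$. Taking $\varepsilon_{t_0}$ small enough that $c_2 e^{-\delta_j/\varepsilon_{t_0}} \leq 1/2$ for every $j$, one gets
\[
\gamma_q \;\geq\; \prod_{j=1}^q \frac{1/2}{c_2 e^{-\delta_j/\varepsilon_{t_0}}} \;=\; \frac{1}{(2c_2)^q}\exp\!\left(\frac{1}{\varepsilon_{t_0}}\sum_{j=1}^q \delta_j\right),
\]
and the telescoping identity $\sum_{j=1}^q \delta_j = E_{q+1}-E_1$ yields the first inequality.

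Next I would interpret the sequence of indices of cusps successively visited by $X$ as a time-inhomogeneous Markov chain on $\{0,\ldots,q+1\}$. The key point is that the preceding lemma applied at the local starting time $t$ bounds the probability of transitioning from $\bar A_{C_i}(t)$ up to $C_{i+1}$ by $c_2 e^{-\delta_i/\varepsilon_t}$; since $t\mapsto \varepsilon_t$ is nonincreasing, this is at most $c_2 e^{-\delta_i/\varepsilon_{t_0}} = p_i$. A standard stochastic-domination coupling (built step by step using a common uniform random variable at each transition) then shows that this indexing sequence is dominated by $W$ in the sense that "reach $q+1$ before $0$" is more likely under $W$. Combined with the first inequality this gives the second assertion.

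For the third inequality, I would treat the initial climb separately: starting at $(x_b,-1)$ at time $t_0$, the process reaches $C_1$ before returning to $(x_b,+1)$ if and only if the driving exponential variable exceeds $\int_0^{x_b-u_1}\varepsilon_{t_0+s}^{-1}U'(x_b-s)\,ds$, which by monotonicity of $\varepsilon$ is at least $\delta_0/\varepsilon_{t_0}$; thus this occurs with probability at most $e^{-\delta_0/\varepsilon_{t_0}}$. Conditional on this success the process is in $\bar A_{C_1}$ and the strong Markov property together with the second inequality bound the probability of continuing on to $z_l$ before falling back to $x_b$ by $(2c_2)^q e^{-(E_{q+1}-E_1)/\varepsilon_{t_0}}$. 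Multiplying and using $\delta_0 + (E_{q+1}-E_1) = d$ gives $(2c_2)^q e^{-d/\varepsilon_{t_0}}$.

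The only delicate point, and the step I expect to merit the most care, is making the stochastic domination rigorous in the time-inhomogeneous setting: one must verify that the per-step upper bound on the upward transition probability, together with the fact that "hit $q+1$ before $0$" is monotone in these probabilities, really does allow coupling the cusp-indexing chain to $W$ globally. Everything else is bookkeeping around the telescoping identity and the preceding lemma.
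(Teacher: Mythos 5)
Your gambler's-ruin computation for the first inequality is correct and reaches the same bound by a different route than the paper. The paper instead sets $r_i = \mathbb P\bigl(W \text{ hits } q+1 \text{ before } i-1 \mid W_0 = i\bigr)$, derives the fixed-point relation $r_i = p_i\bigl(r_{i+1} + (1-r_{i+1})r_i\bigr)$ by decomposing on whether the chain returns to $i$, and then uses $p_i \leq 1/2$ to get $r_i \leq 2p_i r_{i+1}$, which telescopes from $r_{q+1}=1$; both methods give $(2c_2)^q e^{-(E_{q+1}-E_1)/\varepsilon_{t_0}}$. Your closed-form expression via $\gamma_k$ has the advantage of making the monotonicity of the hitting probability in the $p_j$ visible, which is exactly what the second inequality needs. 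For the second and third assertions the paper just says ``as a consequence''; your reconstruction (initial-climb bound $e^{-\delta_0/\varepsilon_{t_0}}$, strong Markov, telescoping $\delta_0+(E_{q+1}-E_1)=d$) is the intended one. The integrand in the initial climb should of course be $-U'(x_b-s)\geq 0$ rather than $U'(x_b-s)$, but that is only a sign slip.

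The one spot that needs repair is the coupling you sketch for the stochastic domination. Feeding a common uniform to both chains at each step does \emph{not} preserve the ordering $I_k\leq W_k$ once they differ by one: if $I_k=i=W_k-1$ and $p_{i+1}<p_i$ (nothing forbids $\delta_{i}<\delta_{i+1}$), the uniform can land in $[p_{i+1},\,\tilde p_i^{(k)})$, sending $I$ up to $i+1$ and $W$ down to $i$, so the two chains cross. A clean fix avoids any pathwise ordering: let $h(i)$ be the absorption probability at $q+1$ for $W$ started at $i$, which is increasing in $i$ and satisfies $h(i)=h(i-1)+p_i\bigl(h(i+1)-h(i-1)\bigr)$. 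For the time-inhomogeneous indexing chain one then has $\mathbb E\bigl[h(I_{k+1})\mid I_k=i\bigr]=h(i-1)+\tilde p_i^{(k)}\bigl(h(i+1)-h(i-1)\bigr)\leq h(i)$ because $\tilde p_i^{(k)}\leq p_i$ and $h(i+1)>h(i-1)$, so $h(I_k)$ is a supermartingale; optional stopping at absorption gives the claimed domination, and this handles the time-inhomogeneity you flagged with no extra work.
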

\begin{proof}
 More generally, let
 \[r_i = \mathbb P\po W \text{ hits }q+1\text{ before } i-1\ |\ W_0 = i\pf. \]
 If $W$ hits $q$ before $i-1$, necessarily $W_1 = i+1$ (which occurs with probability $c_2e^{-\frac{\delta_i}{\varepsilon_{t_0}}}$) and either the chain stays above $i$ (with probability $r_{i+1}$), either it goes back at some point to $i$ and then we are back to the initial problem. Thus
 \[r_i =c_2e^{-\frac{\delta_i}{\varepsilon_{t_0}}}\po r_{i+1} + (1-r_{i+1})r_i\pf. \]
 Suppose $\varepsilon_{t_0}$ is small enough to have $c_2e^{-\frac{\delta_i}{\varepsilon_{t_0}}} \leq \frac12$ for all $i=1,..,q$, so that
 \[r_i \leq 2 c_2e^{-\frac{\delta_i}{\varepsilon_{t_0}}} r_{i+1},\]
 which concludes since $r_{q+1}=1$.
\end{proof}
We only considered here the left part of $C$, but the same goes for the right one. If $x_b' = \max B$ then there exist $c_2'$ and $q'$ such that
\[\mathbb P\po (X,Y) \text{ hits }(z_r,1)\text{ before } (x_b',-1)\ |\ (X_{t_0},Y_{t_0}) = (x_b',1)\pf \leq \po 2c_2'\pf^{q'} e^{-\frac{d}{\varepsilon_{t_0}}},\]
and we write $c_3 = \max\po \po 2c_2\pf^q,\ \po 2c_2'\pf^{q'}\pf$.

\begin{proof}[Proof of Theorem \ref{TheoremLowerBound}]
Let $a>0$ be small enough such that $U$ is non-decreasing (resp. non-increasing) on $\{ x_b-a,x_b\}$ (resp. $\{ x_b',x_b'+a\}$) and that $U(z_l) - \max\po U(x_b-a),U(x_b'+a)\pf \geq \frac d2$. That way,
\[\mathbb P\po (X,Y) \text{ hits }(z_l,1)\text{ before } (x_b,1)\ |\ (X_{t_0},Y_{t_0}) = (x_b-a,-1)\pf \leq c_3 e^{-\frac{d}{2\varepsilon_{t_0}}}\]
(and similarly in the right part of $C$). It means if at some time $t_1$ the temperature has been divided by two since $t_0$, then from $t_1$ we have the same bound on the probability of success of an attempt to leave $C$ starting from $(x_b-a,-1)$ than we had for an attempt to leave $C$ starting from $(x_b,-1)$ at initial temperature.

Let $t_j = t_0 + 2aj$ and $K_0 = t_0$, suppose $K_j$ has been defined for some $j\in\mathbb N$, let $S_{0,j} = K_j$ and for $i\geq 0$
\begin{eqnarray*}
 \widetilde S_{i+1,j} &=&  \inf\left\{t > S_{i,j} \text{ s.t. } \po (X_t,Y_t) \in \{(x_b,-1),(x_b',1)\}\text{ and } \varepsilon_t \geq \frac12 \varepsilon_{t_j}\pf \right\}\\
 \overline S_{i+1,j} &=&  \inf\left\{t > S_{i,j} \text{ s.t. } \po (X_t,Y_t) \in \{(x_b-a,-1),(x_b'+a,1)\}\text{ and } \varepsilon_t < \frac12 \varepsilon_{t_j}\pf \right\}\\
 S_{i+1,j} &=& \min\po \tau_C,\widetilde S_{i+1,j}, \overline S_{i+1,j}\pf.
\end{eqnarray*}
Let $N_j = 0$ if $K_j = \tau_C$ and else let
\begin{eqnarray*}
N_j & = & \inf\left\{n\geq 1, \ S_{n,j} \geq K_j  + 2a\text{ or }S_{n,j}=\tau_C\right\}\\
K_{j+1} & = & S_{N_j,j}.
\end{eqnarray*}
The situation is the following: while the process has not escaped $C$ yet, the $S_{i,j}$'s are starting times of new attempts to leave with, for all $i,j\in\mathbb N^2$
\begin{eqnarray*}
\mathbb P\po S_{i+1,j}=\tau_C  \ | \ S_{i,j},\ S_{i,j}<\tau_C\pf & \leq &  c_3 e^{-\frac{d}{2\varepsilon_{S_{i,j}}}}.
\end{eqnarray*}
A problem is that we don't control the way the temperature evolves with $i,j$, and this is why we introduced the $K_j$'s, which are the times at which we update the temperature. Indeed $K_j \geq t_j$ so that for all $i,j\in\mathbb N^2$, $\varepsilon_{S_{i,j}} \leq \varepsilon_{t_j}$, which implies $\po \mathbb 1_{S_{i\wedge N_j,j}<\tau_C} + (i\wedge N_j)  c_3 e^{-\frac{d}{2\varepsilon_{t_j}}}\pf_{i\geq 0}$ is a submartingale and
\begin{eqnarray*}
\mathbb P\po K_{j+1}<\tau_C  \ | \ K_j,\ K_{j}<\tau_C\pf & = & 1+\mathbb E\po \mathbb 1_{K_{j+1}<\tau_C} - 1_{K_{j}<\tau_C}  \ | \ K_j,\ K_{j}<\tau_C\pf \\
&\geq & 1-  c_3 e^{-\frac{d}{2\varepsilon_{t_j}}}\mathbb E\po N_j  \ | \ K_j,\ K_{j}<\tau_C\pf.
\end{eqnarray*}
In the first instance suppose there exists $m$ (which does not depend on the cooling schedule) such that for all $j\geq 0$
\begin{eqnarray}\label{EqENj}
\mathbb E\po N_j  \ | \ K_j,\ K_{j}<\tau_C\pf & \leq & \frac{m}{\sqrt{\varepsilon_{t_j}}}.
\end{eqnarray}
In that case by induction we get
\begin{eqnarray*}
\mathbb P\po K_{j+1}<\tau_C \pf & =& \mathbb E\po \mathbb P\po K_{j+1}<\tau_C  \ | \ K_j,\ K_{j}<\tau_C\pf \mathbb P\po K_{j }<\tau_C \pf\pf\\ 
&\geq & \po 1-  \frac{m c_3}{\sqrt{\varepsilon_{t_j}}} e^{-\frac{d}{2\varepsilon_{t_0+j}}}\pf \mathbb P\po K_{j}<\tau_C \pf\\
& \geq & \prod_{k=0}^{j} \po 1-  \frac{m c_3}{\sqrt{\varepsilon_{t_k}}} e^{-\frac{d}{2\varepsilon_{t_k}}}\pf .
\end{eqnarray*}
Suppose $\varepsilon_{t_0}$ is small enough so that $ m c_3\po \varepsilon_{t_0}\pf^{-\frac12} e^{-\frac{d}{2\varepsilon_{t_0}}}$ is less than the positive solution of $1-z = e^{-2z}$, and so that $\varepsilon \mapsto \varepsilon^{-\frac12} e^{-\frac{d}{\varepsilon}}$ is increasing on $(0,\varepsilon_{t_0})$. Hence for $j\geq1$
\begin{eqnarray*}
\mathbb P\po t_j <\tau_C \pf & \geq & \mathbb P\po K_{j+1}<\tau_C \pf \\
& \geq & \exp\po - 2m c_3 \sum_{k=0}^{j-1} \frac{e^{-\frac{d}{2\varepsilon_{t_k}}}}{\sqrt{\varepsilon_{t_k}}} \pf\\
& \geq & \exp\po {-\frac{mc_3}a \int_{t_0}^{t_j} \frac{e^{-\frac{d}{\varepsilon_{u}}}}{\sqrt{\varepsilon_{u}}} du }\pf,
\end{eqnarray*}
and for all $r>0$,
\begin{eqnarray*}
\mathbb P\po r <\tau_C \pf & \geq & \mathbb P\po t_0 + 2a\left\lceil \frac{r- t_0}{2a}\right\rceil <\tau_C \pf  \\
& \geq & \exp\po {- 2mc_3\po \frac{e^{-\frac{d}{\varepsilon_{t_0}}}}{\sqrt{\varepsilon_{t_0}}}+ \frac{1}{2a}\int_{t_0}^{r} \frac{e^{-\frac{d}{\varepsilon_{u}}}}{\sqrt{\varepsilon_{u}}} du \pf}\pf.
\end{eqnarray*}
It remains to prove \eqref{EqENj}, which states that between two consecutive updates of the temperature, there are not too many attempts to leave. Intuitively, if the temperature decays slowly then the inter-jump times are of order $\sqrt{\varepsilon_{t_j}}$ and so the number of attempts in a fixed duration should of the order  $ {\varepsilon_{t_j}}^{-\frac12}$. On the other hand if the temperature falls rapidly then we only take into account attempts that starts from $(x_b-a,-1)$ or $(x_b+a,1)$, and between two such attempts the process has to cover a distance of at least $2a$.

More precisely, since the process moves at constant speed, for all $i,j$, $\overline S_{i+1,j} - \overline S_{i,j}> 2a$, so that 
\[N_j \leq 1 + \inf\left\{n\geq 1, \ \widetilde S_{n,j} \geq K_j  + 2a\text{ or }\varepsilon_{\widetilde S_{n,j}} \leq \frac12\varepsilon_{t_j} \right\}. \]
Let $\rho>0$ be such that $U(x_b -s) \leq U(x_b) + \rho (x_b-s)^2$ for $s\leq |x_b-z_l|$ (and similarly in the right part of $C$; the situation being the same, we only consider the left part in the following). Starting from $(x_b,-1)$ at time $S_{i,j}$ the next jump time $T$ of the process, defined by \eqref{DefiTavecExpo} from an exponential r.v. $E$, satisfies,
\[T \geq \inf \left\{s>S_{i,j},\ E < \frac{\rho s^2}{\frac12\varepsilon_{t_j}} \right\} \wedge \inf \left\{s>S_{i,j},\ \varepsilon_{s} \leq \frac12\varepsilon_{t_j} \right\} \]
and thus $\widetilde S_{i+1,j} - S_{i,j} \geq 2 T$ and
\[N_j \leq 1 + \inf\left\{n\geq 1, \sqrt{\frac{\varepsilon_{t_j}}{2\rho}}\sum_{i=1}^n \sqrt{E_i} \geq 2a \right\} \]
where $(E_i)_{i\geq 1}$ is a sequence of i.i.d. random variables with law $\mathcal E(1)$, independent from the past ($t\leq K_j$). Now
\begin{eqnarray*}
N_j & \leq & 1+ \left\lceil\frac{4a \sqrt{\rho}}{\mathbb E\po \sqrt E\pf\sqrt{\varepsilon_{t_j}}}\right\rceil + \inf\left\{n\geq 1, \frac1n\sum_{i=1}^{n} \sqrt{E_i} \geq \frac12  \mathbb E\po\sqrt E\pf  \right\}.
\end{eqnarray*}
Note that $Z_k = \frac1k\sum_{i=1}^{k} \po \frac{\sqrt{E_i}}{ \mathbb E\po\sqrt E\pf} -1\pf$ satisfies a Large Deviation Principle, so that
\begin{eqnarray*}
\mathbb E\po N_j  \ | \ K_j,\ K_{j}<\tau_C\pf & \leq & 1+ \left\lceil\frac{4a \sqrt{\rho}}{\mathbb E\po \sqrt E\pf\sqrt{\varepsilon_{t_j}}}\right\rceil + \sum_{k\geq 1}  \mathbb P\po Z_k \leq -\frac12\pf  \\
&\leq & \frac{m}{\sqrt{\varepsilon_{t_j}}}
\end{eqnarray*}
for some $m<\infty$ as long as $\varepsilon_{t_0}$ is small enough.
\end{proof}
\textbf{Remark:} \emph{Here the constant $c_4$ only depends on $U''(x_b)$ and $U''(x_b')$, and $m$ only depends on $c_4$. Furthermore, as has already been noticed, $c_3$ only depends on the internal sub-cusps of $C$. Thus, if $(D_l)_{l\geq 0}$ is a family of cusps so that for all $k\geq 0$ all the minima of $D_k$ belong to $\underset{l\geq0}\bigcap D_l$, the constant $\Gamma$ in Theorem \ref{TheoremLowerBound}  may be chosen uniformly over $k$. And if $U$ is a potential with a finite number of local minima, going to $+\infty$ at $\pm \infty$, $\Gamma$ may be chosen uniformly over all cusps of $U$.}

\subsection{Proof of the NS condition}

Now that Theorems \ref{TheoremsortieCoupe} and \ref{TheoremLowerBound} are established, we recall (and slightly adapt) the arguments from \cite{Hajek} to prove Theorem \ref{TheoremNS}.

\bigskip

Let $E\geq 0$ and
\begin{eqnarray*}
 W_E & = & \left\{x\in\R,\ x\text{ is a local minimum of depth strictly larger than }E \right\}\\
 R_E & = & \left\{x\in\R,\ x\text{ is reachable from $y$ at height $V(y)+E$ for some }y\in W_E \right\}\\
 J & = & \left\{x\in\R,\ W_E\text{ is reachable from $x$ at height }V(x) \right\}.
\end{eqnarray*}
If $x\notin J$, then the set of points which are reachable from $x$ at height $V(x)$ is a cusp of depth at most $E$ (else its bottom would be in $W_E$ and reachable from $x$ at height $V(x)$) which does not intersect $J$ (if a point $y$ were in the intersection, we could reach $y$ from $x$ at height $V(x)$ and then reach a point in $W_E$ from $y$ at height $V(y)\leq V(x)$). 
Thus the connected component of  $\R\smallsetminus J$ which contains $x$ is itself a cusp of depth at most $E$. Moreover  one of its end is a local maximum (else it could be thicken without intersecting $J$) which means there are finitely many connected components of  $\R\smallsetminus J$; we call them $C_1,\dots,C_n$.

\begin{lem}\label{LemPegalUn}
Let $E>0$. 
If
 \[\int_0^\infty  \po \varepsilon_{s}\pf^{-\frac12}e^{-\frac{E}{\varepsilon_{s}}} ds = \infty \]
 then
 \[\underset{t\rightarrow\infty}\lim \mathbb P \po X_t \in R_E\pf = 1.\]
\end{lem}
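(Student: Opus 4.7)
My plan is to follow Hajek's strategy from the discrete-state case, now supported by the two-sided control provided by Theorems \ref{TheoremsortieCoupe} and \ref{TheoremLowerBound}. Write $g(s) = \po \varepsilon_s\pf^{-1/2} e^{-E/\varepsilon_s}$ and $\Phi(T) = \int_0^T g(s)\,ds$, so the hypothesis reads $\Phi(\infty) = +\infty$; the goal is $\mathbb P\po X_t \notin R_E\pf \to 0$.

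First, I would use the decomposition $\R = J \sqcup \bigsqcup_{i=1}^n C_i$ established in the paragraph preceding the lemma, each $C_i$ being a cusp of depth $d_i \leq E$. Since $W_E \subseteq R_E$, the complement $R_E^c$ splits into the $C_i$'s together with a ``high tail in $J$'': points $x \in J$ with $U(x) > U(y) + E$ for every $y \in W_E$ that is reachable from $x$ at height $U(x)$. I would treat these two contributions separately.

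For the cusps, Theorem \ref{TheoremsortieCoupe} applied inside $C_i$ combined with $d_i \leq E$ yields
\[ \E \co \int_{t_0}^{\tau_{C_i}} g(s) \, ds \,\Big|\, \bar A_{C_i}(t_0) \cf \leq c. \]
Decomposing a trajectory into successive visits to $C_i$, this uniformly bounds the expected $g$-weighted time the process spends inside $C_i$ during each single visit. To convert this into $\mathbb P(X_t \in C_i) \to 0$, I would argue by contradiction: if along some $t_k \to \infty$ one had $\mathbb P(X_{t_k} \in C_i) \geq \delta > 0$, then Theorem \ref{TheoremLowerBound} applied from the last entry into $C_i$ before $t_k$ would ensure that the process stays in $C_i$ with probability bounded below throughout a window $[t_k, r_k]$ chosen so that $\int_{t_k}^{r_k} g(s) \, ds$ is of order one. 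Integrating, the expected $g$-weighted occupation of $C_i$ in each such window is uniformly positive; summing along a diverging sequence of disjoint windows (available since $\Phi(\infty)=\infty$) would contradict the summable bound above. This is the continuous-time incarnation of Hajek's mechanism.

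For the high-tail piece, I would use the one-dimensional geometry: the process can be at a point $x$ with $U(x) > U(y) + E$ only during an uphill excursion along which $U$ has increased by more than $E$ since the last local minimum it visited. The Poisson clock then fires during such a climb with probability at least $1 - e^{-E/\varepsilon_t}$, and the unit-speed motion prevents the process from lingering near such points; an elementary bound on the underlying exponential variables, together with $\varepsilon_t \to 0$, gives $\mathbb P(X_t \in J \setminus R_E) \to 0$ and closes the argument.

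The main obstacle I anticipate is the conversion in the cusp step: a bounded expected $g$-weighted occupation only forces $\liminf \mathbb P(X_t \in C_i) = 0$ by itself, and it is precisely the pairing with the \emph{lower} survival bound of Theorem \ref{TheoremLowerBound} that rules out oscillations and upgrades this to $\lim = 0$. Making this coupling quantitative, uniformly over the cooling schedule, is where the real work lies.
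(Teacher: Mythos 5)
Your proposal correctly identifies the two key tools (Theorems \ref{TheoremsortieCoupe} and \ref{TheoremLowerBound}) and the decomposition of $\R$ into $J$ and the cusps $C_i$, and you honestly flag the conversion step as the heart of the matter — but your contradiction argument at that step does not close. The bound you invoke from Theorem \ref{TheoremsortieCoupe} controls the expected $g$-weighted time $\E\co \int_{t_0}^{\tau_{C_i}} g(s)\,ds \mid \bar A_{C_i}(t_0)\cf \leq c$ \emph{per visit} to $C_i$, conditional on entering through $\bar A_{C_i}$. Your argument then derives, for each $t_k$, that the expected $g$-weighted occupation of $C_i$ in a window around $t_k$ is bounded below; summing over many such windows gives an unbounded total. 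But that does not contradict a per-visit bound: nothing in your argument controls the \emph{number} of visits, and a large number of visits is entirely compatible with each visit contributing at most $c$. (There is also a secondary issue: for a cusp of depth $d_i < E$, Theorem \ref{TheoremLowerBound} involves $\int g_{d_i} \gg \int g$, so the window with $\int g \sim 1$ does not give a uniform lower bound on the survival probability.)

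The paper's proof is structurally different and is worth internalizing because it sidesteps exactly this obstacle. It does not attempt to show $\mathbb P(X_t \in C_i)\to 0$ cusp by cusp. Instead it tracks the first time $A_\alpha$ at which the process reaches $W_E$ itself, controlling $\E\co\int_{t_0}^{A_\alpha} g \cf \leq \tilde c$ globally — and the control on the number of excursions through the $C_i$'s comes from part \eqref{PartieDeuxTheoremeSortie} of Theorem \ref{TheoremsortieCoupe}, which bounds $\E\co\alpha\cf$ via the uniform lower bound on exiting a cusp through a given end (together with convexity for the two unbounded monotone components of $J$ and the unit speed for the compact ones — the explicit ``high-tail'' geometry you sketch plays no role). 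Once at $W_E$, the process sits at the bottom of a cusp of depth $E+\gamma$ contained in the \emph{thickened} set $R_{E,\gamma}$, and Theorem \ref{TheoremLowerBound} gives $\mathbb P(A_\beta \geq r) \geq \exp\po -\Gamma e^{-\gamma/\varepsilon_{t_0}}\po \cdots + \int_{t_0}^r g\pf\pf$. The decisive step you are missing is the thickening $R_E \to R_{E,\gamma}$ and the accompanying choice of time scale $h(t)$ defined by $\int_t^{h(t)} g = 1/\varepsilon_t$: the factor $e^{-\gamma/\varepsilon_t}$ kills $1/\varepsilon_t$ in the exponent, so $\mathbb P(A_\beta \geq h(t)) \to 1$, while $\mathbb P(A_\alpha \leq h(t)) \to 1$ by Markov's inequality. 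The result follows from $\mathbb P(X_r \in R_{E,\gamma}) \geq \mathbb P(A_\beta \geq r \geq A_\alpha)$. In short: the argument is constructive (reach $W_E$, then get trapped) rather than by contradiction, and the $\gamma$-thickening plus the $h(t)$ time change is the mechanism that makes the trapping probability tend to one; those two ideas are what your plan would need to add.
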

\begin{proof}
Note that there are a finite number of minima in $W_E$, so that there exists $\gamma_0>0$ such that all those are of depth larger than $E+\gamma_0$. We thicken $R_E$, for $\gamma < \gamma_0$, as
\begin{eqnarray*}
 R_{E,\gamma} & = & \left\{x\in\R,\ x\text{ is reachable from $y$ at height $V(y)+E+\gamma$ for some }y\in W_E \right\}.
\end{eqnarray*}
Since $R_E = \underset{\gamma >0}\bigcap R_{E,\gamma}$, we only need to prove the result for an arbitrary $\gamma<\gamma_0$. Let $t_0 \geq 0$, $A_0 = t_0$ and
 \begin{eqnarray*}
 B_i & = & \inf\{t>A_i,\ X_t\notin J\},\\
  A_{i+1} & = & \min\{t>B_i,\ X_t \in J\},\\
  \alpha & = & \inf\{k \geq 0,\ X_{A_k}\in W_E\},\\
  \beta & =& \inf\{k>\alpha,\ X_{B_k} \notin R_{E,\gamma}\}.
 \end{eqnarray*}
 Since there is only a finite number of $C_j$, we can consider $\varepsilon_0$ and $c>0$ in Theorem \ref{TheoremsortieCoupe} such that if $\varepsilon_{t_0} \leq \varepsilon_0$, for all $i\geq 0$,
 \[\E\co \left.\int_{B_i}^{A_{i+1}} \po \varepsilon_{s}\pf^{-\frac12}e^{-\frac{E}{\varepsilon_{s}}} ds\right| B_i\cf \leq c. \]
 (Note that this would be false for $E=0$).
 
On the other hand, note that on each connected component of $J\smallsetminus W_E$, $U$ is monotone. There are two such components which are infinite, and where the potential is convex, so that the expected time the process stays there is bounded by a constant which only depends on $\varepsilon_0$. The time the process stays in a compact connected component of $J\smallsetminus W_E$ is bounded by twice the length of the component, and there is a finite number of such components. Thus there exist $c'$ such that if $\varepsilon_{t_0} \leq \varepsilon_0$, for all $i\geq 0$,
\[\E\co \left.\int_{A_i}^{B_i} \po \varepsilon_{s}\pf^{-\frac12}e^{-\frac{E}{\varepsilon_{s}}} ds\right| A_i\cf \leq c'. \]
Moreover as a consequence of part \eqref{PartieDeuxTheoremeSortie} of Theorem \ref{TheoremsortieCoupe}, $\E\po \alpha\pf$ is bounded by a constant which does not depend on the cooling schedule, and finally there exists $\tilde c>0$ such that
\[\E\co \int_{t_0}^{A_\alpha} \po \varepsilon_{s}\pf^{-\frac12}e^{-\frac{E}{\varepsilon_{s}}} ds\cf \leq \tilde c, \]
so that
\begin{eqnarray*}
 \mathbb P\po A_\alpha \geq r\pf & \leq  & \frac{\tilde c}{\int_{t_0}^{r} \po \varepsilon_{s}\pf^{-\frac12}e^{-\frac{E}{\varepsilon_{s}}} ds}.
\end{eqnarray*}
At time ${A_\alpha}$, the process attains the bottom of a cusp $C$ (and therefore is in $\bar A_C$) of depth $E+\gamma$ which is included in $R_{E,\gamma}$. Thanks to Theorem \ref{TheoremLowerBound},
\begin{eqnarray*}
 \mathbb P \po A_\beta \geq r\pf & \geq &  \exp\po - \Gamma \po \frac{e^{-\frac{E+\gamma}{\varepsilon_{t_0}}}}{\sqrt{\varepsilon_{t_0}}}+ \int_{t_0}^{r} \frac{e^{-\frac{E+\gamma}{\varepsilon_{u}}}}{\sqrt{\varepsilon_{u}}} du \pf\pf\\
  & \geq &  \exp\po - \Gamma e^{-\frac{\gamma}{\varepsilon_{t_0}}}\po\frac{e^{-\frac{E}{\varepsilon_{t_0}}}}{\sqrt{\varepsilon_{t_0}}}+ \int_{t_0}^{r} \frac{e^{-\frac{E}{\varepsilon_{u}}}}{\sqrt{\varepsilon_{u}}} du \pf\pf.
\end{eqnarray*}
Thus, for any $t_0$ and $r\geq t_0$,
\begin{eqnarray*}
 \mathbb P\po X_r \in R_{E,\gamma}\pf & \geq & \mathbb P \po A_\beta \geq r \geq A_\alpha\pf \\
 & \geq &  \exp\po - \Gamma e^{-\frac{\gamma}{\varepsilon_{t_0}}}\po\frac{e^{-\frac{E}{\varepsilon_{t_0}}}}{\sqrt{\varepsilon_{t_0}}}+ \int_{t_0}^{r} \frac{e^{-\frac{E}{\varepsilon_{u}}}}{\sqrt{\varepsilon_{u}}} du \pf\pf - \frac{\tilde c}{\int_{t_0}^{r} \po \varepsilon_{s}\pf^{-\frac12}e^{-\frac{E}{\varepsilon_{s}}} ds}.
\end{eqnarray*}
Let $h(t)$ be defined for any $t\geq 0$ by
\[ \int_{t}^{h(t)} \frac{e^{-\frac{E}{\varepsilon_{u}}}}{\sqrt{\varepsilon_{u}}} du = \frac{1}{\varepsilon_t}.\]
As a strictly increasing function it is invertible, and in particular $(t\rightarrow\infty) \Leftrightarrow \po h(t) \rightarrow \infty\pf$.
\begin{eqnarray*}
 \mathbb P\po X_{h(t)} \in R_{E,\gamma}\pf & \geq &  \exp\po - \Gamma e^{-\frac{\gamma}{\varepsilon_{t}}}\po \frac{e^{-\frac{E}{\varepsilon_{t}}}}{\sqrt{\varepsilon_t}}+ \frac{1}{\varepsilon_t} \pf\pf - \tilde c \varepsilon_t\\
 & \underset{h(t)\rightarrow\infty}\longrightarrow & 1.
\end{eqnarray*}
\end{proof}

\begin{proof}[Proof of Theorem \ref{TheoremNS}]
 We treat first the case of fast cooling, namely we assume that for all $\delta >0$,
  \[\int_0^\infty  \po \varepsilon_{s}\pf^{-\frac12}e^{-\frac{\delta}{\varepsilon_{s}}} ds < \infty.\]
 Let $x$ be a local minimum of $U$, $\delta >0$ and $C_\delta(x)$ be the set of points which are reachable from $x$ at height $U(x) + \delta$. Any neighbourhood of $x$ contains $C_\delta(x)$ for $\delta$ small enough. If at some time $t_0$ the process enters $C_\delta(x)$, Theorem \ref{TheoremLowerBound} yields
 \[\mathbb P \po X_t \in C_\delta(x)\ \forall t\geq t_0\pf \geq \exp\po - \Gamma \po \frac{e^{-\frac{\delta}{\varepsilon_{t_0}}}}{\sqrt {\varepsilon_{t_0}}}+ \int_{t_0}^{\infty} \frac{e^{-\frac{\delta}{\varepsilon_{u}}}}{\sqrt{\varepsilon_{u}}} du \pf\pf  >0.\]
 Thus, each time the process reaches a local minimum $x$, it has a positive probability to stay trapped forever in a neighbourhood of $x$. If it escapes, almost surely it will reach another local minimum later. Thus, the probability that it get trapped at some time is 1, and the probability that it's already been trapped at time $t$ goes to 1 as $t$ goes to infinity, so that, if $S$ is a neighborhood of all local minima of $U$,
 \[\underset{t\rightarrow\infty}\lim \mathbb P \po X_t \in S\pf = 1.\]
 Moreover, for any local minimum $x$, as the temperature is not allowed to vanish, there is a non-zero probability to reach $x$, and so to stay trapped in $C_\delta(x)$, which yields 
 \[\underset{t\rightarrow\infty}\liminf \mathbb P \po X_t \in C_\delta(x)\pf >0.\]
 Thus, we have proved the part 1 and the ``if'' ($\Rightarrow$) half of part 2 of Theorem \ref{TheoremNS} in the case of fast cooling. Finally, the ``only if'' half of part 2 is tautological in this case.
 
 Concerning the part 3, recall that we have supposed there is at least one non-global minimum $\tilde x$, near which the process has a non-zero probability to stay forever. In this event $U(X_t) \geq U(\tilde x) > \underset{\R}\min U + \delta $ for $\delta$ small enough, so that
 \[\underset{t\rightarrow\infty}\lim \mathbb P\po U(X_t) < \underset{\R}\min U + \delta\pf \hspace{15pt} \leq \hspace{15pt} 1 - \mathbb P \po X_t\text{ gets trapped near }\tilde x\pf \hspace{15pt} < \hspace{15pt} 1.\]
 
 \bigskip 
 
 Now we turn to slow cooling, namely we suppose there exists $F>0$ such that $\forall \delta \neq F$,
  \[\po \int_0^\infty  \po \varepsilon_{s}\pf^{-\frac12}e^{-\frac{\delta}{\varepsilon_{s}}} ds = \infty\pf \hspace{20pt} \Leftrightarrow \hspace{20pt}  \po \delta < F \pf.\]
  Then, for all $\delta < F$, according to Lemma \ref{LemPegalUn},
 \[\underset{t\rightarrow\infty}\lim \mathbb P \po X_t \in R_{\delta}\pf = 1.\]
  Since any neighbourhood of all local minima contains $R_\delta$ for $\delta$ small enough, part 1 of Theorem \ref{TheoremNS} is proved.
  
  Let $E>0$ be such that
  \[ \int_0^\infty  \po \varepsilon_{s}\pf^{-\frac12}e^{-\frac{E}{\varepsilon_{s}}} ds = \infty,\]
 and $S$ be a neighbourhood of all minima of depth $E$ such that $S^c$ is a neighbourhood of all other minima. We want to prove 
 \[\underset{t\rightarrow\infty}\lim \mathbb P \po X_t \in S\pf = 0.\]
 Lemma \ref{LemPegalUn} together with the first part of Theorem \ref{TheoremNS} we have just proven implies that for any neighbourhood $\mathcal U$ of the minima in $R_E$,
 \[\underset{t\rightarrow\infty}\lim \mathbb P \po X_t \in \mathcal U\pf = 1.\]
 Thus, since $S^c$ is a neighbourhood of all the minima it contains, it is enough to prove that $R_E$ does not contain any minimum of depth exactly $E$.
 
 Let $x$ be such a minimum, and let $z$ be such that $U(z) < U(x)$ and $z$ is reachable from $x$ at height $U(x)+E$. Suppose $x\in R_E$, and let $y\in W_E$ be such that $x$ is reachable from $y$ at height $U(y)+E$.
 
 If $U(y) < U(x)$, since $y$ is reachable from $x$ at depth $U(y)+E < U(x) + E$, by definition of the depth of a local minimum, it means $x$ is of depth strictly less than $E$, which is a contradiction.
 
 On the other hand if $U(y) \geq U(x)$, then $z$ is reachable from $y$ at height $U(y)+E$, while $U(z) <U(y)$ , which is contradictory with the fact that $y$ is of depth strictly larger than $E$. 
 
 This means $R_E$ does not contain any local minimum of depth $E$.
 
 \bigskip
 
 At this point we have proven part 1 and implication $\Leftarrow$ of part 2 of Theorem \ref{TheoremNS}, which we will use to prove the converse.
 
 \bigskip
 
 Now suppose 
  \[ \int_0^\infty  \po \varepsilon_{s}\pf^{-\frac12}e^{-\frac{E}{\varepsilon_{s}}} ds < \infty.\]
 In particular $E\geq F$. Let  $x$ be a minimum of depth $E$. As a first step, assume
  \[ \int_0^\infty  \po \varepsilon_{s}\pf^{-\frac12}e^{-\frac{F}{\varepsilon_{s}}} ds < \infty.\]
Let $C$ be the set of all points which are reachable from $x$ at height strictly less than $U(x)+F$  (if $F=E$, $C=C_x$). Then $C$ is a cusp of depth $F$, whose bottom $B$ is constituted of minima of depth exactly $E$, and the depth of any other minimum in $C$ is strictly less than $F$ (since $B$ is reachable from them without leaving $C$).
 
 From Theorem \ref{TheoremLowerBound}, the process has a non-zero probability to stay trapped forever in $C$, so that
 \[\underset{t\rightarrow\infty}\liminf\ \mathbb P \po X_t \in C\pf >0.\]
 On the other hand, if $S\subset C$ is a neighbourhood of $B$, since the depth $d$ of any minimum in $C\smallsetminus S$ satisfies $d<F$ and so
  \[ \int_0^\infty  \po \varepsilon_{s}\pf^{-\frac12}e^{-\frac{d}{\varepsilon_{s}}} ds = \infty,\]
  from part 1 and implication $\Leftarrow$ of part 2 of Theorem \ref{TheoremNS}
 \[\underset{t\rightarrow\infty}\lim\ \mathbb P \po X_t \in C\smallsetminus S\pf = 0.\]
Hence,
 \[\underset{t\rightarrow\infty}\liminf\ \mathbb P \po X_t \in S\pf >0\]
 which ends the proof of part 2.
 
 A slight adaptation is needed when
  \[ \int_0^\infty  \po \varepsilon_{s}\pf^{-\frac12}e^{-\frac{F}{\varepsilon_{s}}} ds = \infty.\]
  In this case, necessarily $F<E$. Let $\eta \in(0,F-E)$ and $C$ be the set of all points which are reachable from $x$ at height $U(x)+F+ \eta$. Then $C$ is a cusp of depth $F+\eta$ whose bottom $B$ is constituted of minima of depth exactly $E$, and whose other minima are all of depth strictly less than $F+\eta$ (since $B$ is reachable from them without leaving $C$). Since there is only a finite number of such minima, in fact if $\eta$ is small enough these non-global minima in $C$ are even of depth less than $F$ (possibly equal). 
  
  Since $F+\eta > F$, the process has a non-zero probability to stay trapped forever in $C$. On the other hand, since all non-global minima in $C$ are of depth less than $F$, 
 \[\underset{t\rightarrow\infty}\lim\ \mathbb P \po X_t \in C\smallsetminus S\pf = 0.\]
 as soon as $S$ is a neighbourhood of $B$. Thus the same conclusion holds.
 
 \bigskip
 
 As in the fast cooling case, part 3 is a direct consequence of parts 1 and 2 and of the presence of at least one non-global minimum.
  
\end{proof}

\section{Non-minimal rate}\label{SectionNonMinimal}

As have been seen in Section \ref{SectionDefRTP}, the measure $\mu_\varepsilon = e^{-\frac{U(x)}{\varepsilon}}dx \otimes \frac{\delta_1+\delta_{-1}}{2}$ is invariant for the Markov process with generator
\begin{eqnarray}\label{DefGeneResiduel}
Lf(x,y) & = & y f'(x,y) + \lambda(x,y) \po f(x,-y) - f(x,y)\pf
\end{eqnarray}
if and only if $\lambda(x,y) = \frac{\po yU'(x)\pf_+}\varepsilon + r(x)$, where $r$ can be any non-negative function. A positive $r$ is a residual rate of jump which brings randomness in the system at any time. The previous works on explicit estimations of convergence to equilibrium for the velocity jump process (\cite{Fontbona2010,Calvez,Monmarche2013}) all assume $r$ is bounded below by a positive constant $r_*$. 

If $r$ is bounded from above by a constant $r^*$ uniformly in $x$ and $\varepsilon$, it is expected that he behaviour of the
process does not change. Indeed, at low temperature, it only adds random jumps at exponential (macroscopic, in the sense: of order of magnitude independent from $\varepsilon$) times to the minimal-rate dynamics, and the latter accounts both for the way the process overcome an energy barrier, and the metastable behaviour in the vicinity of a local minimum, namely: many microscopic excursions of length of order $\sqrt \varepsilon$. In the rest of this section, we will make this statement more precise, and prove it.

We will consider the context of Section \ref{SectionTemperatureConstante}, that is a uni-dimensional double-well potential $U$ with its three local extrema $x_0<x_1<x_2$ and an homogeneous Markov process starting at  $(X_0,Y_0) = (x_0,1)$; but now the generator is given by \eqref{DefGeneResiduel} with $r\neq 0$. We are interested in the following:
\begin{eqnarray}
\eta & = & \inf\left\{t>0,\ X_t \in \{x_0,x_1\} \right\},\label{EqEta}\\
p_{x_0} & = & \mathbb P\po X_{\eta} = x_1\pf.\notag
\end{eqnarray}

\begin{prop}
Whatever the residual rate of jump $x\mapsto r(x)$,
\begin{eqnarray*}
p_{x_0} & = & \frac{e^{-\frac{ U(x_1)-U(x_0)}\varepsilon}}{1+\int_{x_0}^{x_1} r(z)e^{-\frac{ U(x_1)-U(z)}\varepsilon}dz }
\end{eqnarray*}
\end{prop}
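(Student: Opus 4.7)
The plan is to compute $p_{x_0}$ via a boundary value problem for two hitting probabilities, one per value of the velocity. For $x\in[x_0,x_1]$ set
\[ u(x) = \mathbb P_{(x,+1)}(X_\eta = x_1), \qquad v(x) = \mathbb P_{(x,-1)}(X_\eta = x_1),\]
with $\eta$ defined as in \eqref{EqEta}. Because the process starts at $(x_0,+1)$ and immediately leaves $x_0$, continuity of $u$ at $x_0$ gives $p_{x_0}=u(x_0)$. The natural boundary conditions are $u(x_1)=1$ and $v(x_0)=0$.

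First I would write down the ODE system satisfied by $(u,v)$ on $(x_0,x_1)$. Infinitesimally, from $(x,+1)$ the process moves to $(x+dx,+1)$ with probability $1-\lambda_+(x)dx$ and flips to $(x,-1)$ with probability $\lambda_+(x)dx$, where $\lambda_+(x)=U'(x)/\varepsilon + r(x)$; from $(x,-1)$ it moves to $(x-dx,-1)$ with probability $1-\lambda_-(x)dx$ and flips to $(x,+1)$ with probability $\lambda_-(x)dx$, where $\lambda_-(x)=r(x)$ (since $U'(x)>0$ on $(x_0,x_1)$, so $(-U'(x))_+=0$). The classical harmonicity argument (or equivalently $Lf=0$ for $f(x,\pm1)=u,v$) then gives
\[ u'(x) = \lambda_+(x)\bigl(u(x)-v(x)\bigr), \qquad v'(x) = r(x)\bigl(u(x)-v(x)\bigr).\]

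Second, I would subtract the two equations. Letting $w=u-v$, the telescoping $\lambda_+(x)-r(x)=U'(x)/\varepsilon$ yields the scalar linear ODE
\[ w'(x) = \frac{U'(x)}{\varepsilon} w(x),\]
so $w(x)=w(x_0)\,e^{(U(x)-U(x_0))/\varepsilon}$. The boundary condition $v(x_0)=0$ gives $w(x_0)=u(x_0)=p_{x_0}$. Plugging back into the equation for $v'$ and integrating from $x_0$ to $x_1$,
\[ v(x_1) = p_{x_0}\int_{x_0}^{x_1} r(z)\,e^{(U(z)-U(x_0))/\varepsilon}\,dz.\]
Combining $v(x_1) = u(x_1) - w(x_1) = 1 - p_{x_0}\,e^{(U(x_1)-U(x_0))/\varepsilon}$ with the previous identity and solving for $p_{x_0}$ gives
\[ p_{x_0} = \frac{1}{e^{(U(x_1)-U(x_0))/\varepsilon} + \int_{x_0}^{x_1} r(z)\,e^{(U(z)-U(x_0))/\varepsilon}\,dz},\]
which, after multiplying numerator and denominator by $e^{-(U(x_1)-U(x_0))/\varepsilon}$, is the claimed formula.

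The routine-but-technical point is justifying the first step rigorously: that $u,v$ are $C^1$ on $[x_0,x_1]$ and that the infinitesimal balance equations hold. This can be done cleanly by a first-step analysis on the first jump time $T$, conditioning on whether $T$ exceeds the travel time to a nearby point, and passing to the limit; alternatively one applies Dynkin's formula to a candidate $C^1$ harmonic function $f(x,\pm 1)$ stopped at $\eta$ and identifies $u,v$ by uniqueness of the boundary value problem. No real obstacle beyond this standard verification: once the system is set up, the fact that $\lambda_+-\lambda_-=U'/\varepsilon$ makes the computation explicit.
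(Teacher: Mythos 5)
Your argument is correct and reaches the stated formula, but it follows a genuinely different route from the paper. You set up the standard pair of hitting probabilities $u(x)=\mathbb P_{(x,+1)}(X_\eta=x_1)$ and $v(x)=\mathbb P_{(x,-1)}(X_\eta=x_1)$ on the fixed interval $[x_0,x_1]$, impose harmonicity $L(u,v)=0$, and observe that the difference $w=u-v$ obeys the linear equation $w'=\frac{U'}{\varepsilon}w$ because the residual rate cancels in $\lambda_+-\lambda_-$. The paper instead introduces a single scalar function $p_x$, the probability of reaching $(x_1,1)$ before returning to $(x,-1)$ starting from $(x,1)$ --- note the second endpoint moves with $x$, so $p_x$ is not your $u(x)$ except at $x=x_0$ --- and derives by a first-step renewal argument the Riccati equation $\partial_x p_x = p_x\bigl(\frac{U'(x)}{\varepsilon}+p_xr(x)\bigr)$, which it then solves with the ansatz $p_x=h(x)e^{-(U(x_1)-U(x))/\varepsilon}$ and the terminal value $h(x_1)=1$. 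Your linear-system approach is the more textbook one (Dynkin / one-step conditioning, then uniqueness of the two-point boundary value problem), sidesteps the nonlinearity entirely, and makes the cancellation $\lambda_+-\lambda_-=U'/\varepsilon$ do all the work; the paper's moving-endpoint $p_x$ is more idiosyncratic but stays in a single unknown. Both are valid, and the algebra at the end agrees. The one point worth spelling out if you write this up is the regularity / uniqueness justification for the boundary value problem that you flagged as ``routine but technical''; a clean way, as you say, is to define the candidate solution explicitly from the ODE with those boundary conditions, check it is $C^1$, apply Dynkin's (or optional stopping) to the stopped process, and identify it with $(u,v)$.
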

\textbf{Remarks :}\begin{itemize}
\item A positive residual rate of jump can only worsen the probability to overcome an energy barrier in one shot.
\item A positive residual rate of jump in the neighbourhood of $x_1$ does more harm than the same rate near $x_0$.
\end{itemize}
\begin{proof}
If $x\in(x_0,x_1)$, let $p_x$ be the probability that the process $(X_t,Y_t)_{t\geq0}$, starting from $(x,1)$, reaches $(x_1,1)$ before $(x,-1)$.
Suppose the process starts at $(x-s,1)$ for some small $s>0$. The probability the process goes from $x-s$ to $x$ without any jump is $1-s\po \frac{U'(x)}{\varepsilon}+r(x)\pf + \underset{s\rightarrow 0}o(s)$. The probability it reaches $(x,1)$ before $(x-s,-1)$ but with at least one jump (and so with at least two jumps) is of order $s^2$ (when $s\rightarrow 0$). Once the process has reached $(x,1)$, it has a probability $p_x$ to reach $(x_1,1)$ before having fallen back to $(x,-1)$. If nevertheless it has fallen back to $(x,-1)$ - which occurs with probability $(1-p_x)$ - it has a probability $sr(x) + \underset{s\rightarrow 0}o(s)$ to jump before reaching $(x-s,-1)$, in which case it reaches again $(x,1)$ with probability $1+\underset{s\rightarrow 0}o(1)$. In this latter case, it reaches $(x_1,1)$ before $(x-s,-1)$ with probability $p_x + \underset{s\rightarrow 0}o(1)$. Thus everything boils down to
\begin{eqnarray*}
p_{x-s} & = & \po 1-s\po \frac{U'(x)}{\varepsilon}+r(x)\pf\pf \po p_x + s(1-p_x)p_xr(x)\pf + \underset{s\rightarrow 0}o(s)\\
 & = & p_x -s p_x\po \frac{U'(x)}{\varepsilon} + p_x r(x)\pf  + \underset{s\rightarrow 0}o(s).
\end{eqnarray*}
We shall solve the differential equation, for all $x\in(x_0,x_1)$,
\[\partial_x p_x = p_x\po \frac{U'(x)}{\varepsilon} + p_x r(x)\pf,\]
by looking for a solution of the form $p_x = h(x) e^{-\frac{U(x_1)-U(x)}{\varepsilon}}$. This yields
\begin{eqnarray*}
\po \frac{1}{h}\pf'(x) & = & - r(x) e^{-\frac{U(x_1)-U(x)}{\varepsilon}},
\end{eqnarray*}
and we conclude with $h(x_1) = p_{x_1} = 1$.
\end{proof}
To study $\eta$, the length of an excursion away from $x_0$, we construct the process with generator \eqref{DefGeneResiduel} in the following way: let $(E_k)_{k\geq1}$ and $(F_k)_{k\geq1}$ be two independent sequences of independent r.v. with law $\mathcal E(1)$. 
 Let $V_1$ and $W_1$ be defined by
\[\varepsilon E_1 = \int_0^{V_1} \po Y_{0} U'(X_{0} + Y_{0} s)\pf_+ ds,\hspace{25pt}F_1 = \int_0^{W_1} r(X_{0} + Y_{0} s) ds.\]
Let $S_1 = \min\po V_1,W_1\pf$. From $T_0 = 0$ to $T_1 = S_1$ the process goes on deterministically, in the sense that $Y_{s} = Y_{0}$ and $X_{s} = X_{0} + s Y_{0}$. At time $T_1$ the process jumps, so that $Y_{T_1} = - Y_{0}$. If $S_1 = V_1$, the process jumps due to the minimal rate; we will call this a first type jump. If $S_1 = W_1$ the jump is due to the residual rate and we will call this a second type jump. When the process has been defined up to a jump time $T_j$, we start the same procedure again, replacing $E_1$ by $E_{j+1}$ and $F_1$ by $F_{j+1}$.


Since $U$ is supposed smooth, $U^{(3)}(x)$ is bounded on $[x_0,x_1]$, interval on which $U$ is strictly increasing. Thus it exists $\rho >0$ such that for all $x\in[x_0,x_1)$, for all positive $s \leq  x_1-x$,
\[ U(x+s) \geq U(x) + \rho s^2.\]
We define
\[D_k = \min\po |x_1 - x_0|,\ \sqrt{\frac{\varepsilon E_{2k-1}}{\rho}}\pf,\]
which is independent from the sequence $(F_k)_{k\geq 0}$.
On the other hand, we consider the events
\[A_k = \left\{ F_k < \int_0^{x_1-x_0} r(z)dz\right\} \]
and the geometric variable
\[N = \max\left\{k\geq 1,  A_{2k}\right \},\]
which is independent from the sequence $(E_k)_{k\geq 0}$.
\begin{lem}
Whatever the residual rate of jump $x\mapsto r(x)$,
\begin{eqnarray*}
\eta & \leq & 2\sum_{k=1}^N D_k,
\end{eqnarray*}
where $\eta$ is defined by \eqref{EqEta}.
\end{lem}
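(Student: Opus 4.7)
The plan is to decompose the trajectory of $(X_t,Y_t)_{t\in[0,\eta]}$, starting from $(x_0,1)$, into successive \emph{attempts} indexed by $k\geq 1$: the $k$-th attempt consists of an outward leg (velocity $+1$) followed by an inward leg (velocity $-1$), and is terminated either when $X$ reaches $\{x_0,x_1\}$ or when a residual jump during the inward leg reverses the process back to velocity $+1$ (starting a new attempt). Since legs alternate, the $k$-th outward leg uses the fresh pair $(E_{2k-1},F_{2k-1})$ for its two clocks, while on the $k$-th inward leg the minimal rate vanishes (as $YU'(X)\leq 0$ throughout the descent from the previous rightmost point back towards $x_0$), so $E_{2k}$ is effectively unused and only $F_{2k}$ can trigger a jump.

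I first bound the distance covered during the $k$-th outward leg by $D_k$. Starting from position $x_0+s$ with $s\in[0,x_1-x_0)$, the minimal-rate jump time $V$ is defined by $\varepsilon E_{2k-1}=U(x_0+s+V)-U(x_0+s)$. Two subcases appear: if $V\leq x_1-x_0-s$, the quadratic lower bound applied at $y=x_0+s$ gives $\varepsilon E_{2k-1}\geq \rho V^2$, hence $V\leq\sqrt{\varepsilon E_{2k-1}/\rho}$; if instead $V>x_1-x_0-s$, the leg can cover at most $x_1-x_0-s$ before hitting $x_1$, and the same bound with $v=x_1-x_0-s$ forces $x_1-x_0-s<\sqrt{\varepsilon E_{2k-1}/\rho}$. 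Since a residual jump can only shorten the leg and $|x_1-x_0|$ is a trivial upper bound as well, the outward distance never exceeds $D_k=\min(|x_1-x_0|,\sqrt{\varepsilon E_{2k-1}/\rho})$.

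Next I determine when the process stops. The $k$-th inward leg starts at some $x_0+s'_k$ with $s'_k\leq x_1-x_0$ and sees only residual jumps, occurring at a time $W$ satisfying $F_{2k}=\int_0^W r(x_0+s'_k-u)\,du$. When $A_{2k}$ fails, i.e.\ $F_{2k}\geq \int_{x_0}^{x_1}r(z)\,dz\geq \int_{x_0}^{x_0+s'_k}r(z)\,dz$, no residual jump occurs on this inward leg, so the process reaches $x_0$ and $\eta$ is attained. Consequently the total number of attempts performed before $\eta$ is at most $N$.

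To finish, since the speed equals one, $\eta$ is the total length of the trajectory, which splits as rightward plus leftward distance. As the path begins at $x_0$ and ends at $x_0$ or at $x_1\geq x_0$, the leftward distance is at most the rightward one, so $\eta$ is bounded by twice the sum of the outward-leg lengths, yielding $\eta\leq 2\sum_{k=1}^{N}D_k$. The only genuinely technical step is the uniform quadratic control on each outward leg, which is valid from any starting point $x_0+s$ thanks to the global form $U(y+v)\geq U(y)+\rho v^2$ on $[x_0,x_1)$; everything else is pathwise bookkeeping.
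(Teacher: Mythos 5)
Your proof is correct and follows essentially the same route as the paper: decompose the trajectory into alternating ascending and descending legs, bound $\eta$ by twice the cumulated ascending length since $X_\eta - x_0 \geq 0$, bound each outward leg by $D_k$ via the quadratic lower bound on $U$ near $x_0$, and bound the number of attempts by $N$ through the residual-clock criterion $\bar A_{2k}$. Your explicit two-case verification that an outward leg terminating at $x_1$ still has length at most $D_k$ is a small but welcome elaboration of what the paper leaves implicit in its definition of $V_{2k-1}$ as a truncated supremum.
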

\begin{proof}
From time 0 to $\eta$, the process alternates between ascending and downward moves. If $X_\eta = x_0$, the process spends as much time going up as going down, and if $X_\eta = x_1$ then it spends more time going up than down; in either case, $\eta$ is less than twice the cumulated ascending time. Let $n$ be the number of ascending moves before the time $\eta$. At the end of the $k^{th}$ ascending move, the next time of jump will be defined thanks to the variable $F_{2k}$ (as long as the process does not reach $(x_0,-1)$). In the event $\bar A_{2k}$ (namely the negation of $A_{2k}$) there is no second type jump before the process reaches $(x_0,-1)$. This means $n \leq N$.

Let $(d_k)_{1 \leq k \leq n}$ be the duration of the $n$ ascending moves, and $(Z_k)_{1 \leq k \leq n}$ be the starting point of these moves (for instance $Z_1 = x_0$). 
\begin{eqnarray*}
d_k & \leq & V_{2k-1}\\
& = & \sup\left\{t<(x_1-x),\  \varepsilon E_{2k-1} > U(Z_k+t)-U(Z_k) \right\}\\
& \leq & \sup\left\{t<(x_1-x_0),\  \varepsilon E_{2k-1} > \rho t^2 \right\}\\
& = & D_k.
\end{eqnarray*}
Finally,
\[\eta \leq 2\sum_{k=1}^n d_k \leq 2\sum_{k=1}^N D_k.\]
\end{proof}
Now if $r$ is supposed bounded above by a constant $r^*$,
\[\E\co N\cf = 1 + \frac{1}{\mathbb P(\bar A_2)} \leq 1 + e^{(x_1-x_0)r^*},\]
and thus, $N$ and the $D_k$'s being independent, $\E\co\eta\cf \leq c\sqrt{\varepsilon}$ for some constant $c$. On the other hand,
\[\E\co \eta \cf \geq \mathbb P\po \bar A_1,\bar A_2\pf \E\co \eta\ |\ \bar A_1,\bar A_2\cf \geq e^{-2(x_1-x_0)r^*} \E\co \min( V_1, x_1-x_0)\cf.\]
From Lemma \ref{LemRacineEpsi} we get that $\E\co \eta\cf \geq c' \sqrt\varepsilon$ for some $c'>0$. But if $\E\co \eta \cf$ is of order $\sqrt\varepsilon$, it means when $\varepsilon$ goes to 0 it is unlikely that a second type jump occurs during an excursion, so that only the asymptotic of $\E\co V_1\cf$ should intervene. Indeed, we can prove the following:
\begin{prop}
If $r \leq r^*$,
\begin{eqnarray*}
\E\co \eta \cf  & = & \sqrt{\frac{2\pi \varepsilon}{ U''(x_0)}}   \po 1 + \underset{\varepsilon \rightarrow 0}o (1)\pf
\end{eqnarray*}
\end{prop}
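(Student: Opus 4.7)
The strategy is to couple the full process with its minimal-rate counterpart ($r\equiv 0$) by using the same sequence $(E_k)_{k\geq 1}$ in the construction above, and to exploit the fact that the two processes coincide with high probability on $[0,\eta]$. Let $\eta_{\min}$ denote the excursion length of the $r\equiv 0$ process started at $(x_0,1)$. Since in the absence of residual jumps no minimal-rate jump can occur during a descent, one has explicitly
\[\eta_{\min} \ =\ 2V_1\,\mathbbm 1_{V_1<x_1-x_0}\ +\ (x_1-x_0)\,\mathbbm 1_{V_1\geq x_1-x_0}.\]

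The asymptotic $\E\co \eta_{\min}\cf\sim\sqrt{2\pi\varepsilon/U''(x_0)}$ then follows directly from Lemma \ref{LemRacineEpsi} applied on the right of $x_0$: the density of $V_1$ is $\frac{1}{\varepsilon}U'(x_0+t)e^{-(U(x_0+t)-U(x_0))/\varepsilon}$ on $(0,\infty)$, so
\[\E\co 2V_1\mathbbm 1_{V_1<x_1-x_0}\cf \ =\ 2\int_0^{x_1-x_0}\frac{t}{\varepsilon}U'(x_0+t)e^{-\frac{U(x_0+t)-U(x_0)}{\varepsilon}}dt\ \sim\ 2\sqrt{\frac{\pi\varepsilon}{2U''(x_0)}},\]
while the ``direct hit'' contribution $(x_1-x_0)\mathbb P(V_1\geq x_1-x_0)=(x_1-x_0)e^{-(U(x_1)-U(x_0))/\varepsilon}$ is $o(\sqrt\varepsilon)$. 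The same Laplace-type computation with $t^2$ in place of $t$ yields $\E\co \eta_{\min}^2\cf=O(\varepsilon)$.

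Let $\tilde G$ be the event that no second-type jump occurs in $[0,\eta]$; on $\tilde G$ the two coupled processes agree and $\eta=\eta_{\min}$, so
\[\bigl|\E\co\eta\cf-\E\co\eta_{\min}\cf\bigr|\ \leq\ \E\co\eta\,\mathbbm 1_{\tilde G^c}\cf+\E\co\eta_{\min}\,\mathbbm 1_{\tilde G^c}\cf.\]
Conditioning on the minimal-rate trajectory, the probability of at least one residual jump in $[0,\eta_{\min}]$ is bounded by $r^*\eta_{\min}$, hence $\E\co\eta_{\min}\mathbbm 1_{\tilde G^c}\cf\leq r^*\E\co\eta_{\min}^2\cf=O(\varepsilon)=o(\sqrt\varepsilon)$. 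For the first term, the analogous conditioning applied to the full process gives $\mathbb P(\tilde G^c)\leq r^*\E\co\eta\cf=O(\sqrt\varepsilon)$, using the crude upper bound $\E\co\eta\cf\leq c\sqrt\varepsilon$ already established above. The pathwise domination $\eta\leq 2\sum_{k=1}^N D_k$ from the preceding lemma, combined with the independence of $N$ (whose moments are bounded uniformly in $\varepsilon$ since $r\leq r^*$) and the iid family $(D_k)$ (each dominated by $\sqrt{\varepsilon E_{2k-1}/\rho}$, hence $\E\co D_k^2\cf=O(\varepsilon)$), yields $\E\co\eta^2\cf\leq 4\E\co N^2\cf\E\co D_1^2\cf=O(\varepsilon)$ by a standard Jensen-plus-independence computation. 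Cauchy--Schwarz then produces
\[\E\co\eta\,\mathbbm 1_{\tilde G^c}\cf\ \leq\ \sqrt{\E\co\eta^2\cf\,\mathbb P(\tilde G^c)}\ =\ O(\varepsilon^{3/4})\ =\ o(\sqrt\varepsilon),\]
and combining the two pieces gives $\E\co\eta\cf=\E\co\eta_{\min}\cf+o(\sqrt\varepsilon)=\sqrt{2\pi\varepsilon/U''(x_0)}(1+o(1))$.

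The main obstacle is the control of $\E\co\eta\mathbbm 1_{\tilde G^c}\cf$, which must be $o(\sqrt\varepsilon)$ even though $\mathbb P(\tilde G^c)$ is itself only $O(\sqrt\varepsilon)$: the crucial ingredient is that the pathwise bound $\eta\leq 2\sum D_k$ from the preceding lemma yields not just an $L^1$-control on $\eta$, but an $L^2$-control of the correct $\sqrt\varepsilon$ order, reflecting the microscopic $\sqrt\varepsilon$-scaling induced by the quadratic behavior of $U$ near $x_0$. Once this is available, everything else reduces to the Laplace asymptotics already packaged in Lemma \ref{LemRacineEpsi}.
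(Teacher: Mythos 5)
Your proof is correct and follows the same overall plan as the paper's: the leading contribution to $\E[\eta]$ comes from trajectories with no second-type jump—where $\eta=2V_1$ and the Laplace asymptotics of Lemma~\ref{LemRacineEpsi} give $\sqrt{2\pi\varepsilon/U''(x_0)}$—while the remainder is $o(\sqrt\varepsilon)$. The implementations differ in a small but real way. The paper works directly with the explicit event $B=\{r^*D_1\leq\min(F_1,F_2,\delta)\}$ (a sufficient condition for no residual jump), on which $\eta=2V_1$, and bounds $\E[\eta\mathbbm 1_{\bar B}]$ by expanding the dominating sum $\sum_{k=1}^N D_k$ term by term. You instead introduce the coupled minimal-rate excursion $\eta_{\min}$ as a separate object, compute $\E[\eta_{\min}]$ over the whole sample space, and then control $|\E[\eta]-\E[\eta_{\min}]|$ on the pathwise event $\tilde G^c$ via the clean $L^2$-bound $\E[\eta^2]\leq 4\E[N^2]\E[D_1^2]=O(\varepsilon)$ and Cauchy--Schwarz. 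Both routes reach the same conclusion; yours packages the error term more systematically (reducing everything to one moment estimate on $\eta$), and sidesteps the dependence between $\bar B$ and $N$ through $F_2$ that the paper's direct decomposition quietly glosses over.
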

\begin{proof}
When $r\leq r^*$, $F_k \leq r^* W_k$. Since  $V_k \leq D_k$, the event $B = \{r^*D_1 \leq \min(F_1,F_2,\delta)\}$ (for some $\delta >0$) implies that $V_1 \leq \min(W_1,W_2)$: there is no second type jump during the excursion. 
We decompose
\begin{eqnarray*}
\E\co\eta\cf & = \E\co \eta  \mathbb 1_{B}\cf +  \E\co \eta \mathbb 1_{\bar B}\cf.
\end{eqnarray*}
First, note that $\mathbb P\po B\pf$ goes to 1 when $\varepsilon$ goes to 0. Second,
\begin{eqnarray*}
\E\co\mathbb 1_{\bar B} \eta \cf & \leq & 2\E\co \mathbb 1_{\bar B}\sum_{k=1}^N D_k \cf\\
& = & 2\E\co \mathbb 1_{\bar B} D_1\cf + 2 \mathbb P\po \bar B\pf \E\co N-1\cf \E\co D_2\cf\\
& \leq & 2\sqrt{\mathbb P\po \bar B\pf \E\co  D_1^2 \cf} + 2 \mathbb P\po \bar B\pf \E\co N-1\cf \E\co D_2\cf\\
& = & \underset{\varepsilon\rightarrow 0}o\po\sqrt \varepsilon\pf.
\end{eqnarray*}
Suppose that $B$ holds and  $\delta < r^*|x_1-x_0|$. Then the process goes up until a first time jump occurs, then it falls back to $x_0$, and no second type jump occurs: $\eta = 2V_1$, and
\begin{eqnarray*}
\varepsilon E_1 & = & U(x_0 + V_1) - U(x_0)\\
& = & \frac12 U''(x_0)\po \frac{V_1}{\sqrt \varepsilon}\pf^2 \varepsilon + \underset{\varepsilon\rightarrow 0}o\po  \varepsilon\pf\\
\Rightarrow\hspace{15pt}\frac{V_1}{\sqrt \varepsilon} & \underset{\varepsilon\rightarrow 0}\longrightarrow & \sqrt{\frac{2 E_1}{U''(x_0)}}\hspace{10pt}a.s.
\end{eqnarray*}
Moreover $\mathbb 1_{\bar B} \rightarrow \mathbb 1$ almost surely, and
\[\frac{V_1}{\sqrt \varepsilon} \mathbb 1_{\bar B} \leq \frac{D_1}{\sqrt \varepsilon}  \leq \sqrt{\frac{E_1}{\rho}}.\]
By the dominated convergence theorem,
\begin{eqnarray*}
2\E\co V_1\mathbb 1_{\bar B} \cf & = & 2\sqrt\varepsilon \E\co \sqrt{\frac{2 E_1}{U''(x_0)}}\cf \po 1 + \underset{\varepsilon \rightarrow 0}o (1)\pf,
\end{eqnarray*}
which concludes this proof.
\end{proof}
As a conclusion, note than when $r\leq r^*$,
\[e^{-\frac{U(x_1)-U(x_0)}{\varepsilon}} \geq p_{x_0} \geq \frac{e^{-\frac{U(x_1)-U(x_0)}{\varepsilon}}}{1+(x_1-x_0)r^*}.\]
From these bounds, following the proof of Theorem \ref{TheoremNS}, we can see the same result holds even if the non-minimal rate is not zero (as long as it is bounded above uniformly in time). On the other hand if $r$ goes to infinity as $\varepsilon$ goes to 0, the velocities at two different instants are more and more decorrelated, and we may think the (suitably rescaled) process ends up with a genuine diffusive part. We recall this has been proved at least for the case of a constant potential on the torus, in \cite{Volte-Face}.

\section{The multidimensional process}\label{SecDimd}

In this section we consider an inhomogeneous process $Z^{\varepsilon}$ on $\mathbb T^d\times\Sd$ with generator $(L_t)_{t\geq 0}$ given by \eqref{EqGenerateurDimD}, with a positive cooling schedule $(\varepsilon_t)_{t\geq0}$, and its associated semi-group $(P_{s,t})_{t\geq s\geq 0}$. To lighten the notations, in this section we drop the $\varepsilon$ exponent and only write $Z=(X,Y)$. In order to prove Theorems \ref{ThmDimDConstant} and \ref{ThmDimDRecuit} we will establish the following intermediary result:

\begin{thm}\label{ThmDimdCouple}
There exist $c,\theta>0$ that depend only on $d,r,U$  such that for all cooling schedule, all initial law $\mu$, and all $t\geq 0$,
\begin{eqnarray*}
\|\mu P_{t,t+\sqrt d}  - \nu_{t}\|_{TV} & \leq & \po 1 - ce^{-\frac{\theta }{\varepsilon_t}}\pf \|   \nu - \nu_{t}\|_{TV} + \theta\po \frac1{\varepsilon_{t+\sqrt d}}-\frac1{\varepsilon_{t}}\pf.
\end{eqnarray*}
\end{thm}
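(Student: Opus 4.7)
The plan is to compare the inhomogeneous semigroup $P_{t,t+\sqrt d}$ to the frozen semigroup $\bar P_t$ corresponding to the homogeneous dynamics at constant temperature $\varepsilon_t$ --- for which $\nu_t$ is invariant --- and then apply a Doeblin-type minorization to $\bar P_t$ run over time $\sqrt d$. The triangle inequality
\begin{eqnarray*}
\|\mu P_{t,t+\sqrt d}  - \nu_{t}\|_{TV} & \leq & \|\mu P_{t,t+\sqrt d}  - \mu\bar P_t\|_{TV}\ +\ \|\mu\bar P_t  - \nu_{t}\|_{TV}
\end{eqnarray*}
splits the error into a ``temperature drift'' term and a ``relaxation'' term, which I treat separately.

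For the temperature drift I construct a synchronous coupling of the two processes from the same initial point $Z_0\sim\mu$. Both copies share a common uniform-velocity-resampling Poisson process of rate $r$ together with a baseline reflection Poisson process of rate $\po y.\n U(x)\pf_+/\varepsilon_t$. Since the cooling schedule is non-increasing, the inhomogeneous process carries an \emph{additional} reflection rate $\po y.\n U(x)\pf_+\po 1/\varepsilon_{t+s}-1/\varepsilon_t\pf$ at time $t+s$, and the two coupled trajectories coincide until the first such supplementary jump fires. Bounding the expected number of supplementary jumps via $\po y.\n U(x)\pf_+\leq\|\n U\|_\infty$ and the monotonicity of $s\mapsto 1/\varepsilon_{t+s}-1/\varepsilon_t$ gives
\begin{eqnarray*}
\|\mu P_{t,t+\sqrt d}  - \mu\bar P_t\|_{TV} & \leq & \|\n U\|_\infty\int_0^{\sqrt d}\po \frac{1}{\varepsilon_{t+s}}-\frac{1}{\varepsilon_t}\pf ds\ \leq\ \sqrt d\,\|\n U\|_\infty\po \frac{1}{\varepsilon_{t+\sqrt d}}-\frac{1}{\varepsilon_t}\pf,
\end{eqnarray*}
which matches the drift term in the statement.

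For the relaxation term, since $\nu_t\bar P_t=\nu_t$, the standard coupling inequality reduces matters to establishing a Doeblin-type minorization
\begin{eqnarray*}
\bar P_t\po z,\cdot\pf & \geq & c\,e^{-\theta/\varepsilon_t}\,m\qquad\forall z\in\mathbb T^d\times\Sd,
\end{eqnarray*}
for some probability measure $m$ and constants $c,\theta>0$ depending only on $d$, $r$ and $U$. I split this into two pieces. First, the total reflection rate being pointwise bounded by $\|\n U\|_\infty/\varepsilon_t$, the probability that no reflection jump fires during $[0,\sqrt d]$ is at least $\exp\po -\sqrt d\,\|\n U\|_\infty/\varepsilon_t\pf$. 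Second, on this ``no-reflection'' event the dynamics reduces to a pure kinetic random walk on $\mathbb T^d\times\Sd$, namely straight-line motion interrupted by uniform velocity re-samplings at rate $r$; its law at time $\sqrt d$ must be shown to dominate a positive multiple of $dx\otimes dy$ uniformly in the starting point. Combining the two pieces yields the desired minorization, hence $\|\mu\bar P_t-\nu_t\|_{TV}\leq\po 1-ce^{-\theta/\varepsilon_t}\pf\|\mu-\nu_t\|_{TV}$. Plugging both estimates into the triangle inequality and enlarging $\theta$ if necessary to absorb the constant $\sqrt d\,\|\n U\|_\infty$ of the drift step closes the proof.

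The main obstacle is the density lower bound for the kinetic random walk: showing that, uniformly in $(x_0,y_0)$, the walk run for time $\sqrt d$ has a transition kernel bounded below by $c\, dx\otimes dy$. The strategy is to condition on exactly $k\geq d$ uniform jumps occurring at ordered times $0<s_1<\dots<s_k<\sqrt d$, to note that the terminal velocity is the last resampled direction and hence uniform on $\Sd$, and to argue that the map $(s_1,\dots,s_k,y_1,\dots,y_{k-1})\mapsto x_0+s_1y_0+\sum_{i=1}^{k-1}(s_{i+1}-s_i)y_i+(\sqrt d-s_k)y_k\pmod{\mathbb Z^d}$ is, on a set of positive measure in its parameter space, a submersion onto $\mathbb T^d$ with non-degenerate Jacobian. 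The identity between the time window $\sqrt d$ and the diameter of the torus is precisely what makes the cumulative distance travelled sufficient to cover $\mathbb T^d$ in every direction, so that one obtains a uniform-over-$z$ lower bound on the transition density. This is the only step where the dimension $d$ and the parameter $r$ enter the constants $c$ and $\theta$ non-trivially.
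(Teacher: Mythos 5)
Your proposal is correct and rests on exactly the same two mechanisms as the paper's proof: a thinning argument to compare the inhomogeneous dynamics to the frozen dynamics at temperature $\varepsilon_t$ (yielding the drift term $\theta(1/\varepsilon_{t+\sqrt d}-1/\varepsilon_t)$), and a second thinning combined with a Doeblin minorization for the pure kinetic walk on $\mathbb{T}^d\times\mathbb{S}^{d-1}$ (i.e. the $U=0$ dynamics, which is the paper's Lemma \ref{LemDimDU0}) to obtain the contraction factor $1-ce^{-\theta/\varepsilon_t}$.

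The presentation differs slightly: you split the error up front via the triangle inequality through the intermediate measure $\mu\bar P_t$, whereas the paper runs a single joint coupling of $\mu$ and $\nu_t$ and distinguishes the cases $Z_0=Z_0'$ (drift term) and $Z_0\neq Z_0'$ (Doeblin term) inside that coupling. Both routes give the same constants, and yours is arguably a cleaner separation of concerns. Two small remarks. First, in the ``no-reflection'' step what one really conditions on is the absence of any point of the \emph{dominating} Poisson process of constant rate $\|\nabla U\|_\infty/\varepsilon_t$ (so that the conditional law genuinely reduces to the $U=0$ walk, independently of the realized trajectory); saying ``no reflection jump fires'' is a slight imprecision, though the inequality goes the right way. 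Second, for the kernel lower bound on the kinetic walk you condition on $k\geq d$ resamplings; the paper shows that $k=2$ already suffices (one jump time and one intermediate sphere direction already give $d$ free parameters once the terminal velocity is fixed, and the time window $\sqrt d$ exceeds the torus diameter $\tfrac12\sqrt d$), which keeps the bookkeeping minimal.
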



In the first instance we consider the case $U=0$:

\begin{lem}\label{LemDimDU0}
Let $\po P^0_t\pf_{t\geq 0}$ be the semi-group on $\mathcal P\po\mathbb T^d\times \Sd\pf$ generated by
\begin{eqnarray}\label{EqGeneDimDU0}
L f(x,y) &=& y.\n_x f(x,y)   + r \po \int f(x,z) dz - f(x,y)\pf.
\end{eqnarray}
 Then, there exists $c>0$ which depends only on $d$ and $r$ such that for all $\mu_1,\mu_2\in\mathcal P\po\mathbb T^d\times \Sd\pf$,
\begin{eqnarray*}
\left\|\mu_1   P^0_{\sqrt d} - \mu_2 P^0_{\sqrt d} \right\|_{TV} & \leq & (1-c) \left\| \mu_1-\mu_2\right\|_{TV} 
\end{eqnarray*}
\end{lem}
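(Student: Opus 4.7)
The inequality is a TV contraction of $P^0_{\sqrt d}$ acting on Dirac masses (the general case then follows from the standard maximal-coupling argument), so it suffices to prove a Doeblin-type minorization: exhibit $c>0$ and a probability measure $\eta$ on $\mathbb T^d\times\Sd$, depending only on $d$ and $r$, such that $\delta_{(x,y)} P^0_{\sqrt d}\ge c\,\eta$ for every $(x,y)$. The time $T=\sqrt d$ is chosen because it equals the Euclidean diameter of $\mathbb T^d=(\R/\mathbb Z)^d$: in time $\sqrt d$ the process can physically reach any point of the torus from any starting point, which will be essential for the density lower bound below.

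I would restrict attention to the event $E$ that exactly two velocity refreshes occur on $[0,\sqrt d]$, an event of probability $\tfrac12(r\sqrt d)^2 e^{-r\sqrt d}$. On $E$, letting $0<s_1<s_2<\sqrt d$ denote the refresh times and $v_1,v_2$ the new velocities (i.i.d.\ uniform on $\Sd$, independent of the times),
\[
Y_{\sqrt d}=v_2,\qquad X_{\sqrt d}=x+s_1 y+(s_2-s_1)v_1+(\sqrt d - s_2)v_2.
\]
The factor $v_2$ immediately produces the uniform marginal on $\Sd$. Conditioning on $Y_{\sqrt d}=y^\ast$, the map $(s_1,s_2,v_1)\mapsto X_{\sqrt d}$ sends a $(d+1)$-dimensional parameter space into $\mathbb T^d$, and a coarea/Jacobian computation yields an absolutely continuous conditional density of $X_{\sqrt d}$ given $Y_{\sqrt d}$. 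Setting $p=s_1/\sqrt d$, $q=(\sqrt d-s_2)/\sqrt d$ gives the clarifying representation $(X_{\sqrt d}-x)/\sqrt d=py+qy^\ast+(1-p-q)v_1$, i.e.\ a scaled convex combination of three unit vectors.

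The main technical step, and the chief obstacle, is a pointwise lower bound of this conditional density on some fixed open set $A\subset\mathbb T^d$, uniformly in $(x,y,y^\ast)$. One can shrink further to the subregion where $s_1,\ s_2-s_1,\ \sqrt d-s_2$ are all at least $\sqrt d/4$ (still a positive-probability subevent), and use the one excess degree of freedom to integrate out the sphere-concentration singularity in $v_1$, producing a Jacobian bounded below by a constant depending only on $d$. The identity $T=\sqrt d=\mathrm{diam}(\mathbb T^d)$ ensures that every target point of $\mathbb T^d$ admits a preimage of the prescribed form; the delicate configurations (e.g.\ $y\approx -y^\ast$, where the convex-combination parametrisation nearly degenerates) can be absorbed by exploiting the rotational freedom in $v_1$ transverse to the plane spanned by $y,y^\ast$, or handled separately by conditioning on three refreshes instead of two. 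Taking $\eta$ proportional to Lebesgue on $A$ tensor the uniform measure on $\Sd$, and absorbing the probability of the good sub-event together with the density lower bound into $c$, yields the desired Doeblin bound, from which the lemma follows.
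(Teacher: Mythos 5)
Your high-level strategy---condition on exactly two velocity refreshes in $[0,\sqrt d]$, lower-bound the resulting law of $Z_{\sqrt d}$ by a multiple of the uniform measure, and conclude via a Doeblin minorization---matches the paper's. Where you diverge is in how to establish the density lower bound, and here the paper's route is considerably more elementary than the coarea/Jacobian computation you sketch. After fixing the second refresh time $t$ and the second new velocity $y_2$, the paper reduces to lower-bounding the law of $x_0'+tS\,y_0+t(1-S)Y$ with $S$ uniform on $[0,1]$ and $Y$ uniform on $\Sd$, and then, for any target $u$, \emph{exhibits} $s_*\in[0,1]$ and $y_*\in\Sd$ solving $u=x_0'+ts_*y_0+t(1-s_*)y_*$ by an intermediate-value argument (which works precisely because $t>\tfrac12\sqrt d=\mathrm{diam}(\mathbb T^d)$; the diameter is $\tfrac12\sqrt d$, not $\sqrt d$ as you state, though this does not affect the logic); a uniform Lipschitz estimate and the volume of small caps on $[0,1]$ and $\Sd$ then give $\mathbb P\po|X_{\sqrt d}-u|\le\delta\pf\gtrsim\delta^d$. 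This completely bypasses the singular configurations (such as $y\approx -y^*$) you correctly flag as the obstacle to a clean Jacobian lower bound: the paper never needs to control the derivative of the parametrization, only to produce a preimage and perturb it. One further caution on your version: by translation invariance of the free dynamics, a density bound on a fixed open set $A$ that is uniform in the starting point $x$ already propagates to all of $\mathbb T^d$, so restricting to a proper subset $A$ buys nothing---aim directly for the global bound, as the paper does, taking the minorizing measure to be $c\,(dx\times dy)$ on the whole of $\mathbb T^d\times\Sd$.
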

\begin{proof}
Let $(Z_0,Z_0')$ be an optimal coupling of $\mu_1$ and $\mu_2$, in the sense that the law of $Z_0$ (resp. $Z_0'$) is $\mu_1$ (resp. $\mu_2$) and that 
\[\|   \mu_1 - \mu_2\|_{TV} =  \mathbb P\po Z_0\neq Z_0'\pf.\]
We aim to construct a coupling $(Z_t,Z_t')_{t\in\co 0, \sqrt d\cf}$ for which each marginal is a Markov process with generator \eqref{EqGeneDimDU0} and such that $Z_{\sqrt d}$ and $Z_{\sqrt d}'$ are likely to be equal. If $Z_0=Z_0'$ we define $Z$ up to time $\sqrt d$ and then let $Z_t' = Z_t$ for all $t\in(0,\sqrt d]$. 

Now if $Z_0 \neq Z_0'$ we will show that  the (density part of the) laws of $Z_{\sqrt d}$  and $Z_{\sqrt d}'$ are bounded below by a constant time the uniform density on $\mathbb T^d \times \mathbb S^{d-1}$, which yields a lower bound on their total variation distance. Indeed, let $(E_i)_{i\geq 0}$ be a sequence of i.i.d. exponential variables and defined $S_0 =0$ and $S_{i+1} = S_i + \frac1r E_i$ to be the jump times of $Z$. Under the event $\{E_3> r\sqrt d\}$ there are at most two jumps before time $\sqrt d$. More precisely consider the event
\[B=\left\{E_3> r\sqrt d,\ \frac{r \sqrt d}2 < E_1+E_2 < r\sqrt d\right\}\]
 so that if $f$ is any positive function, starting from $Z_0=(x_0,y_0)$,
\begin{eqnarray*}
\mathbb E\po f(Z_{\sqrt d}) \pf & \geq & \mathbb E\po f(Z_{\sqrt d}) \mathbb 1_{B} \pf\\
& \geq & \mathbb P\po E_3> r\sqrt d \pf  \int_{t=\frac12\sqrt d}^{\sqrt d} \int_{s=0}^{t}\int  f\po\tilde x , y_2\pf    r^2 e^{-rt}d y_1 d y_2 d sd t,
\end{eqnarray*}
where $dy_1$ and $dy_2$ stand for the uniform law on $\Sd$ and
\[\tilde x =  x_0 + s y_0 + (t-s) y_1 + (\sqrt d - t) y_2.\]
Suppose $t>\frac12\sqrt d$ and $y_2\in\Sd$ (and thus $x_0' := x_0 +  (\sqrt d - t) y_2$) are fixed. We want to prove
\begin{eqnarray}\label{Eqcouplelem}
 \int_{0}^{t}\int_{\Sd} f\po\tilde x , y_2\pf    e^{-{rs}} d s d y_1 
& \geq & c_1 \int_{\mathbb T^d } f\po u , y_2\pf   d u 
\end{eqnarray}
for some $c_1>0$. Let $u\in\mathbb T^d$ and $\delta>0$, $S$ be a r.v. uniform on $[0,1]$ and $Y$ be uniformly distributed on $\Sd$ and independent from $S$, so that \eqref{Eqcouplelem} reads
\[\mathbb P\po | x_0' + tS y_0 + t(1-S) Y - u |\leq \delta\pf \geq c_2 \delta^{d}\]
with a constant $c_2$ that depends neither on $x_0'$, $t$, $u$, $\delta$ nor $y_0$. Since the diameter of $\mathbb T^d$ is $\frac12\sqrt d<t$, there exists $s_*\in(0,1)$ and $y_*\in\Sd$ such that  $u = x_0'+ts_*y_0+t(1-s_*)y_*$.
\begin{eqnarray*}
\mathbb P\po | x_0' + tS y_0 + t(1-S) Y - u |\leq \delta\pf & \geq & \mathbb P\po |S-s_*| + |Y-y_*| \leq \frac{\delta}{2\sqrt d}\pf\\
& \geq & \mathbb P\po |S-s_*| \wedge |Y-y_*| \leq \frac{\delta}{4\sqrt d}\pf\\
&\geq & c_2 \delta^{d}
\end{eqnarray*}
for some $c_2>0$ since $S$ and $Y$ are uniformly drawn on compact spaces with respective dimensions 1 and $d-1$. Bringing the pieces together we have obtained
\begin{eqnarray*}
\mathbb E\po f(Z_{\sqrt d}) \pf & \geq & r^2 e^{-2r\sqrt d} c_1 \int_{\mathbb T^d\times \Sd } f\po u , y_2\pf   d u dy_2
\end{eqnarray*}
for any $f\geq 0$. In other words for any Borel set $\mathcal A \subset \mathbb T^d\times \Sd$,
\begin{eqnarray*}
\mathbb P\po  Z_{\sqrt d} \in \mathcal A \pf & \geq & c \mathbb P\po  H \in \mathcal A \pf
\end{eqnarray*}
where $c=r^2 e^{-2r\sqrt d} c_1$ and $H$ is a r.v. with uniform law $dx\times dy$  on $\mathbb T^d\times \Sd$. This means there exists a probability measure $m$ (that depends on $Z_0$) such that the law of $Z_{\sqrt d}$ is $ c (dx\times dy) + (1-c)m$. The same goes for $Z'$ with a law $m'$. With probability $c$, set $Z_{\sqrt d}= H = Z'_{\sqrt d}$, and with probability $(1-c)$, draw $Z_{\sqrt d}$ (resp. $Z'$) according to the law $m$ (resp. $m'$). with this coupling,
\begin{eqnarray*}
\mathbb P\po Z_{\sqrt d} \neq Z_{\sqrt d} '\pf &\leq& (1-c) \mathbb P\po Z_{0} \neq Z_{0} '\pf .
\end{eqnarray*}
\end{proof}

\begin{proof}[Proof of Theorem \ref{ThmDimdCouple}]
Let $(Z_0,Z_0')$ be an optimal coupling of $\mu$ and $\nu_t$, in the sense that the law of $Z_0$ (resp. $Z_0'$) is $\mu$ (resp. $\nu_t$) and that 
\[\|   \mu - \nu_{{t}}\|_{TV} =  \mathbb P\po Z_0\neq Z_0'\pf.\]
If $Z_0=Z_0'$ let $Z'=(X',Y')$ evolve as a Markov process with generator $L_{t}$ (here $t$ is fixed). Since $\nu_{ {t}}$ is invariant for $L_{t}$, $Z_{\sqrt d}' \sim \nu_{ {t}}$. Let $E$ be an exponential r.v. independent from $(Z'_s)_{s\in[0,\sqrt d]}$ and
\begin{eqnarray*}
S & = & \inf \left\{s\geq t,\ E < \int_{t}^s \po Y_{u}' \nabla_x U(X_{u}')\pf_+ \po \frac{1}{\varepsilon_{u}} - \frac{1}{\varepsilon_{t}} \pf du\right\}.
\end{eqnarray*}
Define $Z_s = Z_s'$ up to time $T=S\wedge \sqrt d$.  If $T=S$, the process $Z$ is reflected at time $T$ (in the sense $Y\leftarrow Y^*$) and then we let $Z$ evolve as an inhomogeneous Markov process with generator $\po L_{s}\pf_{s\geq 0}$ independently from $Z'$. Note that  $T=\sqrt d$ implies $Z_{\sqrt d}=Z'_{\sqrt d}$, and that 
\begin{eqnarray}\label{EqPTS}
\mathbb P\po T <S\pf & \geq & \mathbb P\po E > \sqrt d \| \nabla_x U\|_\infty\po \frac{1}{\varepsilon_{t+\sqrt d}} - \frac{1}{\varepsilon_{t}} \pf\pf.
\end{eqnarray}
This means
\begin{eqnarray}\label{EqContractTV}
\lefteqn{\| \mu P_{t,t+\sqrt d} - \nu_{t}\|_{TV}  \leq }\notag\\
& &  \mathcal P\po Z_{\sqrt d} \neq Z_{\sqrt d}'\ |\ Z_0 \neq Z_0',T<S\pf \left\|\mu-\nu_{ {t}}\right\|_{TV}  + \po 1 -e^{-\sqrt d \| \nabla_x U\|_\infty \po \frac{1}{\varepsilon_{t+\sqrt d}} - \frac{1}{\varepsilon_{t}} \pf}\pf\notag\\
&\leq &  \mathcal P\po Z_{\sqrt d} \neq Z_{\sqrt d}'\ |\ Z_0 \neq Z_0',T<S\pf \left\|\mu-\nu_{ {t}}\right\|_{TV}  +  \sqrt d \| \nabla_x U\|_\infty \po \frac{1}{\varepsilon_{t+\sqrt d}} - \frac{1}{\varepsilon_{t}} \pf .
\end{eqnarray}
In the following, suppose $T<S$ and $Z_0\neq Z_0'$. Let $W=(R,Q)$ and $W'=(R',Q')$ be Markov processes associated to the generator \eqref{EqGeneDimDU0} with respectively $W_0 = Z_0$ and $W_0'=Z_0'$ and such that 
\[\mathbb P\po W_{\sqrt d}\neq W_{\sqrt d}'\pf = \| \mu P_{\sqrt d}^0 -\nu_{t} P_{\sqrt d}^0  \|_{TV}.\]
Let $F$ be an exponential r.v. which is independent from all the other variables and
\begin{eqnarray*}
S_1 & = & \inf \left\{s\geq t_0,\ \varepsilon_{t} F <   \int_{t}^s \po Q_{u} \nabla_x U(R_{u})\pf_+  du \right\}\\
S_2 & = & \inf \left\{s\geq t_0,\ \varepsilon_{t} F <   \int_{t}^s \po Q'_{u} \nabla_x U(R'_{u})\pf_+  du \right\}
\end{eqnarray*}
Define $Z=W$ up to time $T_1 = S_1 \wedge \sqrt d$ and $Z'=W'$ up to time $S_2\wedge \sqrt d$. If $T_1=S_1$ the process $Z$ is reflected at time $T_1$ and then let it evolve independently from $W$, and similarly for $Z'$. Note that the event $\{\varepsilon_{t} F > \kappa\}$ is independent from $\{ W_{\sqrt d} =W_{\sqrt d}'\}$, so that
\begin{eqnarray}\label{EqFd}
\mathcal P\po Z_{\sqrt d} = Z_{\sqrt d}'\ |\ Z_0 \geq Z_0',T<S\pf & \geq & \mathbb P\po\varepsilon_{t} F > \sqrt d \| \nabla_x U\|_\infty \pf \mathbb P\po W_{\sqrt d} =W_{\sqrt d}' \pf\\
&\geq & c e^{-\frac{\sqrt d \| \nabla_x U\|_\infty}{\varepsilon_{t}}} \notag
\end{eqnarray}
where $c$ is given by Lemma \ref{LemDimDU0}.
\end{proof}
\textbf{Remarks:}
\begin{itemize}
\item Concerning the constants involved in Theorem \ref{ThmDimdCouple}, with the previous proofs we obtained that it is possible to choose $c \leq r^2e^{-2\sqrt d}c_1$, where $c_1$ only depends on $d$, and $\theta \leq \sqrt d \| \nabla_x U\|_\infty$. Nevertheless in the proof of Lemma \ref{LemDimDU0}, notice that the bound $c$ on the probability of success for the coupling is obtained by considering only  the situation where both processes jump twice and only twice. Hence, the bound \eqref{EqPTS} may be improved to
\begin{eqnarray*}
\mathbb P\po T <S\pf & \geq & \mathbb P\po E > \kappa(\sqrt d)\po \frac{1}{\varepsilon_{t+\sqrt d}} - \frac{1}{\varepsilon_{t}} \pf\pf,
\end{eqnarray*}
where we define $\kappa(t)$ as the supremum of $\int_0^t \po q(u)\nabla_x U\po r(u)\pf\pf_+ du$ over all the $(r,q)$ which are the trajectory of a velocity jump process that jumps only twice during time $t$. The same goes at line \eqref{EqFd} and thus Theorem \ref{ThmDimdCouple} holds with $\theta \leq \kappa_2(\sqrt d)$.
 
\item  when we proved \eqref{EqContractTV} we hadn't use the fact $\frac12\sqrt d$ is the diameter of $\mathbb T^d$ yet, which means that in fact for all $s\leq \sqrt d$
\begin{eqnarray*}\label{EqContractTVs}
 \|\mu P_{t,t+s}  - \nu_{t}\|_{TV}  &\leq 
&    \left\|\mu-\nu_{ {t}}\right\|_{TV}  + \sqrt d \| \nabla U\|_\infty \po \frac{1}{\varepsilon_{t+s}} - \frac{1}{\varepsilon_{t}} \pf .
\end{eqnarray*}
\end{itemize}

\begin{proof}[Proof of Theorem \ref{ThmDimDConstant}]
As a direct consequence of the previous remark and of Theorem \ref{ThmDimdCouple} we obtain, if $\varepsilon_t = \varepsilon$ is constant (so that $P_{0,t}=P_t$ is homogeneous),
\begin{eqnarray*}
 \| \mu P_{t } - \nu_{\infty}\|_{TV}  &\leq &   e^{-ce^{-\frac{\theta}{\varepsilon}} \left\lfloor\frac{t}{\sqrt d}\right\rfloor} \left\|\mu-\nu_{ \infty}\right\|_{TV} \\
 &\leq &   e^{-\frac{c}{\sqrt d}e^{-\frac{\theta}{\varepsilon}} \po t- \sqrt d\pf} \left\|\mu-\nu_{ \infty}\right\|_{TV}.
\end{eqnarray*}
\end{proof}

\begin{proof}[Proof of Theorem \ref{ThmDimDRecuit}]
Let $t_0$ be such that $\partial_s \po \frac{1}{\varepsilon_s}\pf \leq \frac{1}{(\theta+\eta) s}$ holds for $s\geq t_0$, let $t>2 t_0  + \sqrt d$ and $n = \left\lfloor \frac{t-t_0}{\sqrt d}\right\rfloor$. For $k\in\cco 0,n\ccf$ let $t_{n-k}= t-k\sqrt d$,
\begin{eqnarray*}
d_k &=&\|\mu P_{0,t_k} - \nu_{t_k}\|_{TV}, \\
a_k &=&    c e^{-\frac{\theta}{\varepsilon_{t_k}}},\\
b_k &=& \theta \po \frac{1}{\varepsilon_{t_{k+1}}} - \frac{1}{\varepsilon_{t_k}}\pf,
\end{eqnarray*}
so that Theorem \ref{ThmDimdCouple} reads $d_{k+1}-d_k \leq -a_k d_k + b_k$. This situation is a discrete analogous of \cite[Lemme 6]{Miclo92}. 
 From $\po\frac 1{\varepsilon_t}\pf' \leq \frac{1}{(\theta+\eta) t}$,
 \[ b_k \leq \frac{\theta \sqrt d }{(\theta+\eta)t_{k}}, \hspace{40pt} a_k \geq m \po \frac{1}{t_k}\pf^{\frac{\theta}{\theta+\eta}},\hspace{40pt} c_k:=\frac{b_k}{ak}\leq m' \po \frac{1}{t_k}\pf^{\frac{\eta}{\theta+\eta}}\]
for some $m,m'>0$. 
 For a fixed $l\in\mathbb N$ and for all $k\geq l $, $f_k:=d_k - m' t_{l}^{-\frac{\eta}{\theta+\eta}}$ satisfies
 \begin{eqnarray*}
 f_{k+1} - f_k & \leq & -a_k f_f\\
 \Rightarrow\hspace{20pt} f_k & \leq & f_l \prod_{j=l}^{k-1}\po 1 - a_j\pf \\
 & \leq & \exp \po - m\sum_{j=l}^{k-1}  \po \frac{1}{t_j}\pf^{\frac{\theta}{\theta+\eta}}\pf\\
  & \leq & \exp \po - \frac{m}{\sqrt d} \int_{t_l}^{t_k}  \po \frac{1}{s}\pf^{\frac{\theta}{\theta+\eta}} ds\pf.
 \end{eqnarray*}
 Since we choose $t > 2 t_0 + \sqrt d$ there exists $l\in\mathbb N$ such that $t_l \geq \frac t2 \geq t_l - \sqrt d$, hence
 \begin{eqnarray*}
 \|\mu P_{0,t } - \nu_{t }\|_{TV} & = & d_n\\
 &\leq & m' t_{l}^{-\frac{\eta}{\theta+\eta}} + \exp \po - m'' \po t^{\frac{\eta}{\theta+\eta}} - t_l^{\frac{\eta}{\theta+\eta}}\pf \pf\\
 &\leq & M t^{-\frac{\eta}{\theta+\eta}}
 \end{eqnarray*}
 for some $m'',M>0$.
 
Let $(X,Y)$ be a velocity jump process with generator $(L_s)_{s\geq 0}$ and initial law $\mu$ and $(V,W)$ be a r.v. with law $\nu_t$. By Laplace method for any $\alpha>1$ there exists a constant $C$ such that for all $h>0$,
\begin{eqnarray*}
\mathbb P\po U(V) > \min U + h\pf & \leq & Ce^{-\frac{h}{\alpha\varepsilon_t}}.
\end{eqnarray*}
The conclusion follows from
\begin{eqnarray*}
\mathbb P\po U(X_t) > \min U + h\pf & \leq & \mathbb P\po U(V) > \min U + h\pf + \|\mu  P_{0,t } - \nu_{t }\|_{TV}\\
 & \leq & Ce^{-\frac{h}{\alpha\varepsilon_t}} + M t^{-\frac{\eta}{\theta+\eta}}.
\end{eqnarray*}
\end{proof}

\textbf{Remark:} Here is an heuristic to adapt the previous arguments to prove Theorem \ref{ThmDimDRecuit} holds with $\theta = E_*$: given any smooth path $\gamma:[0,1] \rightarrow \mathbb T^d$ with $|\gamma'|=1$ and any $\delta>0$, a process $W$ with generator \eqref{EqGeneDimDU0} (namely for which $U=0$) starting from $\gamma(0)$ has a positive probability (depending on $r,d$ and $\gamma$) to reach $\gamma(1)$ in a given time (say $t=2$) staying in a tube of diameter $\delta$ around the path $\gamma$. Here, a tube is defined by
\[\mathcal T_{\gamma,\delta} =\{(x,y)\in\mathbb T^d\times\Sd\text{ s.t. }\exists s\in[0,1],\ |x-\gamma(s)|+|y-\gamma'(s)|\leq \delta \}.\]
 Then we can define a process $Z$ with generator \eqref{EqGenerateurDimD} to be equal to $W$ up to a time at which a clock depending on the cooling schedule (similar to $S_1$ and $S_2$ in the proof of Theorem \ref{ThmDimdCouple}) rings, at which point $Z$ is reflected. If $\gamma$ is an optimal path to leave a cusp $C$, in the sense a path with $\max \gamma - \min \gamma$ equal to $D$ the depth of $C$, the process is not reflected during this climb with probability of order $\exp(-(D+f(\delta))/\varepsilon)$ where $f(\delta)$ goes to 0 with $\delta$.

This leads to an Eyring–Kramers formula (with a non explicit prefactor since the probability for $W$ to approximatively follow $\gamma$ is not easily tractable) and more generally to an adaptation of all the arguments from Section \ref{SectionInhomogene}. Note however that such a study in the regime $\varepsilon\rightarrow 0$ only quantifies metastability due to energy barriers, so that $E_*$ naturally appears, but not entropic barriers (see \cite{LelievreMetastable} for the distinction). The latter are indeed related to the very probability for $W$ to follow a particular path, and a really relevant question to compare the velocity Gibbs sampler to the Fokker-Planck diffusion is: is it easier (i.e. faster) to find a particular path, in some applied problem, with broken lines rather than with a Brownian motion ? When there are both energy and entropic barriers, is there an optimal amount of inertia to overcome both~?
\bibliographystyle{plain}
\bibliography{biblio}
\end{document}